\documentclass[a4paper,leqno]{amsart}
\usepackage[margin=30truemm]{geometry}
\usepackage{fancyhdr,appendix,graphicx,layout}
\usepackage{stix2}
\usepackage{amsmath,amsthm}
\usepackage{enumitem}
\usepackage{physics}

\usepackage[numbers]{natbib}
\usepackage{bbm}
\usepackage{bm}
\usepackage{tikz}
\usepackage{tikz-cd}
\usetikzlibrary{positioning,arrows.meta,decorations.markings,calc}
\tikzset{
    cells={font=\everymath\expandafter{\the\everymath\displaystyle}},
}
\usepackage{caption}
\usepackage[pagewise]{lineno}

\usepackage{float}

\renewcommand{\emph}[1]{\textcolor{blue}{\textit{#1}}}

\usepackage{hyperref}
\hypersetup{
    colorlinks=true,
    linkcolor=red,  
    filecolor=blue,  
    urlcolor=orange, 
    citecolor=violet   
}
\usepackage{cleveref}

\makeatletter
\newcommand\ReDeclareMathOperator[2]{%
    \begingroup \escapechar\m@ne\xdef\@gtempa{{\string#1}}\endgroup
    \expandafter\@ifundefined\@gtempa
        {\@latex@error{Command \string#1 undefined}\@ehc}
        \relax
    \let\@ifdefinable\@rc@ifdefinable
    \DeclareMathOperator#1{#2}}
\makeatother

\crefname{thm}{Theorem}{Theorems}
\crefname{dfn}{Definition}{Definitions}
\crefname{dfnprop}{Definition-Proposition}{Definition-Proposition}
\crefname{prop}{Proposition}{Propositions}
\crefname{lem}{Lemma}{Lemmas}
\crefname{cor}{Corollary}{Corollaries}
\crefname{clm}{Claim}{Claims}
\crefname{ass}{Assumption}{Assumption}
\crefname{cond}{Condition}{Condition}
\crefname{nota}{Notation}{NOtation}
\crefname{conj}{Conjecture}{Conjecture}
\crefname{fct}{Fact}{Facts}
\crefname{rmk}{Remark}{Remarks}
\crefname{eg}{Example}{Examples}
\crefname{que}{Question}{Questions}
\crefname{figure}{Figure}{Figures}
\crefname{table}{Table}{Tables}
\crefname{section}{Section}{Sections}
\crefname{subsection}{Subsection}{Subsections}
\crefname{appendix}{Appendix}{Appendices}
\crefname{equation}{}{}
\crefname{main}{Theorem}{Theorems}

\theoremstyle{definition}
\newtheorem{thm}{Theorem}[section]

\newtheorem{dfn}[thm]{Definition}

\newtheorem{prop}[thm]{Proposition}
\newtheorem{lem}[thm]{Lemma}
\newtheorem{cor}[thm]{Corollary}

\newtheorem{rmk}[thm]{Remark}
\newtheorem{eg}[thm]{Example}

\numberwithin{equation}{section}
\makeatletter
\let\c@equation\c@thm
\makeatother

\newcommand{\A}{\mathcal A}
\newcommand{\D}{\mathcal D}
\renewcommand{\H}{\mathcal H}
\newcommand{\T}{\mathcal T}
\newcommand{\U}{\mathcal U}
\newcommand{\V}{\mathcal V}
\newcommand{\W}{\mathcal W}

\newcommand{\RR}{\mathbf{R}}
\newcommand{\LL}{\mathbf{L}}

\newcommand{\ZZ}{\mathbb{Z}}

\newcommand{\mono}{\hookrightarrow}

\newcommand{\xto}{\xrightarrow}

\newcommand{\To}{\Rightarrow}

\DeclareMathOperator{\add}{add}

\DeclareMathOperator{\rad}{rad}
\ReDeclareMathOperator{\top}{top}

\DeclareMathOperator{\ind}{ind}

\let\mod\relax
\DeclareMathOperator{\mod}{mod}

\DeclareMathOperator{\grmod}{grmod}
\DeclareMathOperator{\Grmod}{Grmod}
\DeclareMathOperator{\sdim}{\st{dim}}

\DeclareMathOperator{\thick}{thick}

\DeclareMathOperator{\per}{per}
\DeclareMathOperator{\pvd}{pvd}

\DeclareMathOperator{\Hom}{Hom}
\DeclareMathOperator{\End}{End}
\DeclareMathOperator{\Aut}{Aut}

\DeclareMathOperator{\REnd}{\RR End}
\DeclareMathOperator*{\ten}{\otimes}
\DeclareMathOperator{\Tot}{Tot}
\DeclareMathOperator{\Ext}{Ext}

\DeclareMathOperator{\NC}{NC}
\DeclareMathOperator{\wide}{wide}
\DeclareMathOperator{\Kr}{Kr}
\DeclareMathOperator{\cox}{cox}
\ReDeclareMathOperator{\l}{\ell}
\newcommand{\ora}{\overrightarrow}

\newcommand{\st}[1]{\underline{#1}}

\DeclareMathOperator{\la}{\langle}
\DeclareMathOperator{\ra}{\rangle}
\newcommand{\wt}{\widetilde}

\newcommand{\vv}{\mathsf{v}}
\newcommand{\ww}{\mathsf{w}}

\title{A classification of derived-discrete graded algebras}
\author{Riku Fushimi and Bohan Xing}
\date{\today}

\newcommand{\Addresses}{{
  \bigskip
  \footnotesize

  R. Fushimi, \textsc{Department of mathematics, Nagoya University, Chikusa-ku, Nagoya 464-8602, Japan}\par\nopagebreak
  \textit{E-mail address}: \texttt{fushimi.riku.h9@s.mail.nagoya-u.ac.jp}

  \medskip

  B. Xing, \textsc{School of Mathematical Sciences, Laboratory of Mathematics and Complex Systems, Beijing Normal University, Beijing 100875, P.R. China}\par\nopagebreak
\textit{E-mail address}: \texttt{bhxing@mail.bnu.edu.cn}
}}

\begin{document}

\begin{abstract}

A finite-dimensional algebra is derived-discrete, in the sense of Vossieck, precisely when it is a piecewise hereditary algebra of Dynkin type or a gentle one-cycle algebra not satisfying the clock condition. Building on the discrete triangulated categories of Broomhead, Pauksztello, and Ploog, we study derived-discreteness for locally finite non-positively graded algebras, regarded as connective locally finite dg algebras with trivial differential. Our main result extends Vossieck's classification to this setting: such a graded algebra is derived-discrete if and only if it is graded Morita equivalent to a piecewise hereditary algebra of Dynkin type, or it is a graded gentle one-cycle algebra not satisfying the graded clock condition. Along the way, using surface models we prove the conjecture of Kalck and Yang that the graded clock condition is invariant under derived equivalence. We also establish a restriction on semi-orthogonal decompositions of the bounded derived category of a path algebra of Dynkin type $D$.
\end{abstract}

\maketitle

\tableofcontents

\section{Introduction}

The perfectly valued derived category $\pvd A:=\{X\in\D(A)\mid \dim H^*X<\infty\}$ is a central derived invariant of a locally finite dg algebra $A$, though its structure is often difficult to describe explicitly. Vossieck \cite{Vos01} singled out the class of \emph{derived-discrete algebras}, whose bounded derived categories are combinatorially controlled: for every prescribed cohomology dimension vector there are only finitely many indecomposable objects of $\D^b(\mod \Lambda)=\pvd \Lambda$ realizing it. He obtained a remarkably clean classification.

\begin{thm}[\cite{Vos01}; see \cref{thm:Vossieck}]\label{thm:intro-Vossieck}
	A finite-dimensional algebra $\Lambda$ is derived-discrete if and only if it is either
	\begin{enumerate}
		\item a piecewise hereditary algebra of Dynkin type, or
		\item a gentle one-cycle algebra not satisfying the clock condition.
	\end{enumerate}
\end{thm}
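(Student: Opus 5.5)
This is Vossieck's theorem, so the plan is to reconstruct his argument. There are two directions: sufficiency (both classes are derived-discrete) and necessity (a derived-discrete algebra falls into one of them). Throughout I may assume $\Lambda$ is basic and connected, since derived-discreteness is Morita invariant and compatible with products.

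For \textbf{sufficiency}, first let $\Lambda$ be piecewise hereditary of Dynkin type $\Delta$. Then $\D^b(\mod\Lambda)\simeq\D^b(\mod k\Delta)$. By Happel, the indecomposables of $\D^b(\mod k\Delta)$ are the shifts of the finitely many indecomposable $k\Delta$-modules, so the Auslander--Reiten quiver is $\ZZ\Delta$. The derived equivalence sends $\Lambda$ to a tilting complex $T$ with finitely many indecomposable summands. Each summand is concentrated in a bounded range of shifts, so a given cohomology dimension vector $(\dim \Hom(T,X[i]))_i$ can be realized only by indecomposables $X$ lying in a bounded window of $\ZZ\Delta$. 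Hence there are finitely many such $X$.

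Next let $\Lambda$ be gentle one-cycle without the clock condition. I would use the Bekkert--Merklen description of indecomposables in $\D^b$ of a gentle algebra as homotopy string and band complexes. Band complexes arise from closed homotopy walks whose numbers of clockwise and anticlockwise relations coincide. With only one cycle and the clock condition failing, no such bands exist. The remaining string complexes come in finitely many families for each bound on total length, since the only cycle can be wound around only in one "direction" consistent with the relations. A fixed cohomology dimension vector bounds the length, which gives finiteness.

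For \textbf{necessity}, the key steps in order are as follows.

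1. Show that derived-discreteness passes to full convex subcategories, or more precisely to idempotent subalgebras $e\Lambda e$ of convex type. The reason is that $\D^b(e\Lambda e)$ embeds suitably and dimension vectors are preserved up to a finite ambiguity.
2. Show that a derived-discrete algebra is representation-discrete in the module sense. In particular it is tame and contains no hereditary subcategory of Euclidean type, because the Kronecker algebra and its Euclidean relatives give $\mathbb P^1$-families.
3. Use the covering theory of Bongartz--Gabriel. Take the universal Galois covering $\tilde\Lambda$. Derived-discreteness of $\Lambda$ forces every finite convex piece of $\tilde\Lambda$ to be derived-discrete and hence, by the tilting/Happel theory, to be piecewise hereditary of Dynkin type. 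In particular these pieces are simply connected of Dynkin type. The fundamental group is then free, and two independent cycles in the quiver would produce a Euclidean or wild derived piece. So $\Lambda$ is either simply connected, and hence piecewise hereditary Dynkin by Happel's criterion via derived-finiteness of the repetitive category, or has exactly one cycle.
4. In the one-cycle case, show that $\Lambda$ is gentle. Non-gentle local configurations would yield, after the covering, non-Dynkin derived pieces. Then show the clock condition must fail: if it held, the cycle would give a band complex, producing a one-parameter family with constant dimension vector.

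The \textbf{main obstacle} is step 3: passing from derived-discreteness to the structural dichotomy. This needs careful control of how Galois coverings interact with $\D^b$ and of the derived-tame/wild dichotomy. Here I would invoke Happel's characterization of derived-finite algebras through the repetitive algebra and Assem--Skowroński's results on coverings of gentle and tree algebras.
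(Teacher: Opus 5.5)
The paper does not prove this statement. It is Vossieck's theorem, quoted as \cref{thm:Vossieck} from \cite{Vos01} and used as a black box, for instance in Proposition~\ref{prop:DD-DD} and the proof of \cref{thm:main}. Your reconstruction therefore has to stand on its own. The Dynkin half of sufficiency and steps~1--2 are fine, but the necessity argument has a genuine gap at its centre.

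The load-bearing sentence is in step~3: every finite convex piece of $\tilde\Lambda$ is derived-discrete ``and hence, by the tilting/Happel theory, piecewise hereditary of Dynkin type''. Happel's theory characterizes \emph{derived-finite} algebras, those with finitely many indecomposables up to shift. That is far stronger than derived-discreteness: the algebras in (2) are derived-discrete but not piecewise hereditary of Dynkin type. The implication can hold only because the pieces are simply connected. Proving that a simply connected derived-discrete algebra is derived-finite is essentially the whole difficulty of Vossieck's theorem, and you do not address it. The simply connected branch of your dichotomy rests on the same unproved claim.

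Your exclusion of two independent cycles also does not fit with what precedes it. If all finite pieces of the covering are Dynkin, the two cycles cannot appear as a Euclidean or wild piece of the covering. The obstruction has to be produced in $\D^b(\mod\Lambda)$ itself, typically as a band-type one-parameter family along a closed walk of total degree zero. Constructing band complexes needs the string combinatorics of gentle algebras, which you only obtain in step~4.

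Step~4 has a second genuine gap: non-gentleness is not a local obstruction to Dynkin type. The branch vertex of a hereditary $D_n$ quiver is a non-gentle configuration inside a piecewise hereditary Dynkin algebra. So is the commutative square, which is derived equivalent to $D_4$. Granting step~3, the large connected pieces of the $\ZZ$-covering are of type $A$ or type $D$, since type $E$ is bounded in size. Type $A$ forces gentleness, by the Assem--Happel description of iterated tilted algebras of type $A$. What actually has to be shown is that the pieces cannot all be of type $D$. This is a global statement about how translated pieces glue together. It is exactly the issue the paper faces in Case~1 of the proof of \cref{thm:main} for its graded analogue, where it is settled by \cref{thm:SOD}: a semi-orthogonal decomposition of $\D^b(\mod kD_n)$ cannot have type-$D$ summands on both sides. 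Your sketch has nothing that plays this role.

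Finally, in the gentle half of sufficiency, the claim that a fixed cohomology dimension vector bounds the length is false in infinite global dimension. Take the oriented $n$-cycle with all paths of length two zero, which violates the clock condition. There the minimal projective resolution of a simple module is an infinite string complex with one-dimensional cohomology. Finiteness still holds, but you need another argument, for example truncating at the lowest cohomological degree and bounding the ranks of the finitely many remaining terms. Also, bands are absent not because the cycle can be wound in only one direction. The reason is that each traversal of the cycle shifts the degree by a nonzero amount, namely the difference between the numbers of clockwise and counterclockwise relations.
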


The derived categories arising in case (2) were described explicitly by Bobi\'{n}ski, Gei\ss, and Skowro\'{n}ski \cite{BGS04}. This work was later placed in a broader categorical framework by Broomhead, Pauksztello, and Ploog \cite{BPP18}, who introduced the notion of \emph{discrete triangulated categories}. Within this framework, Adachi, Mizuno, and Yang \cite{AMY19} studied discreteness properties of silting objects and $t$-structures. One advantage of this categorical viewpoint is that it applies well beyond the setting of trivially graded algebras.

In this paper, we work with locally finite non-positively graded algebras. Such an algebra $A$ may be regarded as a connective, locally finite dg algebra with trivial differential, and we call $A$ \emph{derived-discrete} when its perfectly valued derived category $\pvd A$ is discrete in the sense of \cite{BPP18} (see Definition \ref{dfn:dd}); for trivially graded $A$ this recovers Vossieck's notion. The bridge between the derived category of graded modules and $\pvd A$ is supplied by Kalck-Yang \cite{KY18}. Our main result extends \cref{thm:intro-Vossieck} to all locally finite non-positively graded algebras.

\begin{thm}[see \cref{thm:main}]\label{thm:intro-main}
	A locally finite non-positively graded algebra $A = kQ/I$ is derived-discrete if and only if it is either
	\begin{enumerate}
		\item graded Morita equivalent to a piecewise hereditary algebra of Dynkin type, or
		\item a graded gentle one-cycle algebra not satisfying the graded clock condition.
	\end{enumerate}
\end{thm}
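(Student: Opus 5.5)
The plan is to reduce to the trivially graded case whenever possible, and to handle the genuinely graded one-cycle gentle case directly. The bridge is the result of Kalck--Yang \cite{KY18}. For a locally finite non-positively graded algebra $A$, regarded as a dg algebra with zero differential, it relates $\pvd A$ to the derived category of finite-dimensional graded $A$-modules, i.e.\ to $\D^b(\grmod A)$ modulo the degree shift combined with the cohomological shift (an orbit-category description). Discreteness in the sense of \cite{BPP18} can be tested on cohomology dimension vectors, i.e.\ on classes in $K_0$. So I first check that discreteness of $\pvd A$ is equivalent to a finiteness statement for indecomposables of $\D^b(\grmod A)$ up to this combined shift. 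I also record two invariances: derived-discreteness is invariant under derived equivalence of dg algebras, in particular under graded Morita equivalence, and it passes to thick subcategories and to Verdier quotients or semi-orthogonal pieces, as in \cite{BPP18}.

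For the ``if'' direction there are two cases. In case (1), $A$ is derived equivalent as a dg algebra to a piecewise hereditary algebra of Dynkin type. Then $\pvd A\simeq \D^b(\mod kQ)$ with $Q$ Dynkin, and this category is discrete by \cref{thm:intro-Vossieck}. In case (2), $A$ is graded gentle with one cycle, and I use the surface model for graded gentle algebras: $\pvd A$ is equivalent to a partially wrapped Fukaya category of a graded annulus, and indecomposables correspond to graded curves with local systems. The graded clock condition fails exactly when the winding number of the core curve with respect to the line field is nonzero. In that case the closed curve carries no band objects (it is not gradable), and each class in $K_0$ is realised by only finitely many arcs up to shift. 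This recovers the Bobi\'nski--Gei\ss--Skowro\'nski count \cite{BGS04}. The same surface model gives the derived invariance of the graded clock condition (the Kalck--Yang conjecture), since the winding number is an invariant of the graded surface.

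For the ``only if'' direction, assume $\pvd A$ is discrete. First, the degree-zero part and all idempotent subalgebras $eAe$ give thick subcategories or recollement pieces, so they are again derived-discrete. Next I argue that $A$ must be gentle with at most one cycle, or piecewise hereditary of Dynkin type after graded Morita equivalence. The natural route is to forget the grading: if the ungraded algebra $A$ (or a suitable degree-zero truncation) contains a tame-or-wild configuration, namely a Euclidean subquiver, a non-gentle relation, or two cycles, one produces a one-parameter family of indecomposable graded modules of fixed dimension vector. Through the \cite{KY18} functor, this family gives infinitely many indecomposables in $\pvd A$ with the same cohomology dimension, contradicting discreteness. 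This would be done by a case analysis parallel to Vossieck's, using graded versions of the minimal non-discrete configurations. If $A$ has no cycle, $A$ is a graded tree gentle-type algebra. The grading can then be absorbed by a graded Morita equivalence into a trivially graded algebra, since on a tree any grading is a coboundary, and we land in Vossieck's classification with Dynkin type forced. If $A$ is gentle with one cycle satisfying the graded clock condition, the gradable band yields a $k^\times$-family of band objects with equal dimension vectors, again contradicting discreteness.

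The main obstacle is the ``only if'' reduction: showing that discreteness forces the gentle one-cycle or Dynkin shape when the grading is nontrivial. Nonzero degrees can kill families that would exist in the ungraded setting, so the families must be constructed compatibly with the grading. I would try first to work entirely with families of honest graded modules concentrated in a single total degree. A second issue is that in Dynkin type $D$ the grading may not be removable by naive arguments. This is presumably where the paper's restriction on semi-orthogonal decompositions of $\D^b(\mod kD_n)$ is used: it would rule out exotic graded algebras whose $\pvd$ contains a type-$D$ piece glued in a non-hereditary way.
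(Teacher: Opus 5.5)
\textbf{The `if' direction.} Your outline here is acceptable. For case (2) you count arcs directly in the graded annulus. You would still have to identify which curves model $\pvd A$ rather than $\per A$ when $A$ is infinite-dimensional or of infinite global dimension. The paper takes a different route: it reduces via Theorem~\ref{thm:classify-graded-gentle} to the normal forms of Corollary~\ref{cor:standard-form}, then uses \cite{Fus26}, Example~\ref{eg:n-cycle} and the idempotent reduction of Proposition~\ref{prop:idempotent}.

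\textbf{The gap: the `only if' direction.} Here you give no working mechanism. The one concrete step you state is: forget the grading, locate a Euclidean subquiver, a non-gentle relation or two cycles, and produce a one-parameter family of graded modules. That step is false. Take the Kronecker quiver with arrows of degrees $0$ and $-1$. Its underlying ungraded algebra is tame hereditary of type $\widetilde{A}_1$, hence not derived-discrete. Yet the graded algebra is derived-discrete: it is graded gentle one-cycle with $d_+\neq d_-$, and its truncation quivers $\widetilde{Q}^{[-n,0]}$ are zigzags of type $A_{2n+2}$. The algebra $k[x]$ with $\deg x=-1$ behaves the same way. Since the classification itself contains algebras with Euclidean underlying quiver, no argument driven by ungraded configurations can succeed. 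Your fallback, `graded versions of the minimal non-discrete configurations', is never specified; it would amount to redoing Vossieck's classification in a graded setting. Likewise, your two inheritance statements (to $A_0$ and to $eAe$) never see how the negative arrows glue shifted copies of the degree-zero quiver together, and that gluing is exactly where the constraints come from.

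\textbf{The missing idea: truncation algebras.} These are the ordinary finite-dimensional algebras $\widetilde{A}^{[-n,0]}=\End_{\Grmod A}\bigl(\bigoplus_{l=0}^nA(l)_{\ge-n}\bigr)$. They satisfy $\mod\widetilde{A}^{[-n,0]}\simeq\grmod_0^{[-n,0]}A$, and their quiver is the degree window $\widetilde{Q}^{[-n,0]}$ of the covering of $Q$. Derived-discreteness descends to all of them (Proposition~\ref{prop:DD-DD}):
\begin{enumerate}
\item $\D^b(\grmod_0^{[-n,0]}A)$ embeds fully faithfully in $\D^b_{\grmod_0A}(\Grmod A)$;
\item the orbit category by $\Sigma(-1)$ embeds in $\pvd A$ via $\Tot$ (Theorem~\ref{thm:KY});
\item each $\Sigma(-1)$-orbit meets the window in at most $n+1$ objects, so an infinite family over $\widetilde{A}^{[-n,0]}$ yields one in $\pvd A$.
\end{enumerate}
Vossieck's theorem then applies verbatim to every window, so no graded family ever has to be built by hand.

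\textbf{How the paper finishes.} The rest is combinatorics of these windows.
\begin{enumerate}
\item A graded Morita equivalence makes the degree-zero part connected; this is where your coboundary remark belongs.
\item Counting cycles in the main component of $\widetilde{Q}^{[-n,0]}$ shows that $Q$ has a single negative arrow, of some degree $-k$, and an acyclic degree-zero part.
\item Type $E$ is excluded because that component grows with $n$.
\item Type $D$ is excluded by Theorem~\ref{thm:SOD}. Splitting the window $(-2nk,0]$ into $(-2nk,-nk]$ and $(-nk,0]$ gives a semi-orthogonal decomposition of a type-$D$ derived category with both factors of type $D$. That, rather than a `non-hereditary gluing' inside $\pvd A$, is the role of the type $D$ result.
\item In the remaining gentle tree case, $I$ is seen to be gentle, so $A$ is graded gentle one-cycle, and Theorem~\ref{thm:der-dis-gentle} settles the clock condition.
\end{enumerate}
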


Case (2) is characterized by the graded clock condition, a degree-sensitive refinement of Vossieck's clock condition due to Kalck and Yang \cite{KY18}. Let $d_+$ (resp. $d_-$) denote the difference between the number of clockwise (resp. counterclockwise) oriented relations and the sum of the degrees of all clockwise (resp. counterclockwise) oriented arrows. Then the graded gentle one-cycle algebra $A$ satisfies the \emph{graded clock condition} if $d_+ = d_-$. For the trivial grading, this condition reduces to the equality of the numbers of clockwise and counterclockwise oriented relations. 

Kalck and Yang conjectured \cite[Conjecture 9.5]{KY18} that the graded clock condition is invariant under derived equivalence. Following the geometric approach developed in \cite{HKK17,LP20}, we confirm their conjecture. Moreover, we obtain the following results.

\begin{thm}[see Proposition \ref{prop:gr-clock-cdt} and \cref{thm:der-dis-gentle}]\label{prop:intro-clock}
	Let $A = kQ/I$ be a graded gentle one-cycle algebra. Then the following conditions are equivalent:
\begin{enumerate}
\item $A$ satisfies the graded clock condition;
\item $A$ is derived equivalent either to the path algebra $kQ$ with the trivial grading, where $Q$ is the quiver of type $\widetilde{A}$, or to an algebra $kQ/I$, where $Q$ is an $n$-cycle, $I$ is generated by all paths of length two, and exactly one arrow has positive degree $n$.
\end{enumerate}
Moreover, if $A$ is locally finite and non-positively graded, then the following conditions are equivalent:
\begin{enumerate}
\item $A$ is derived-discrete;
\item $A$ is silting-discrete;
\item $A$ is not derived equivalent to the path algebra of type $\widetilde{A}$ with the trivial grading;
\item $A$ does not satisfy the graded clock condition.
\end{enumerate}
\end{thm}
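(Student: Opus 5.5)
We will use the surface model for graded gentle algebras in the sense of \cite{HKK17,LP20}. A graded gentle one-cycle algebra $A=kQ/I$ corresponds to a graded marked surface $(S_A,M_A,\eta_A)$ with a line field $\eta_A$, and $S_A$ is an annulus. Derived equivalences of graded gentle algebras are then controlled by a complete invariant. By Lekili--Polishchuk and the Amiot--Plamondon--Schroll type results, two such algebras are derived equivalent if and only if there is an orientation-preserving homeomorphism of the marked surfaces carrying one line field to the other up to homotopy. For an annulus, the homotopy class of the line field is determined by the winding numbers $w(\partial_1),w(\partial_2)$ of the two boundary components, together with the numbers of marked points on each boundary.

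The first step is to compute these winding numbers from the quiver. Each boundary component of $S_A$ is traced by the ``clockwise'' or the ``counterclockwise'' side of the unique cycle of $Q$. The standard formula expresses the winding number along a boundary component as the number of relations met along it minus the sum of the degrees of the arrows traversed. Applied to the two sides of the cycle, this gives, after fixing sign conventions, $w(\partial_1)=d_+$ and $w(\partial_2)=d_-$, up to a common correction by the marked points. The Poincaré--Hopf constraint for an annulus, $w(\partial_1)+w(\partial_2)=0$, suitably normalized, should then show that the graded clock condition $d_+=d_-$ is equivalent to both boundary winding numbers vanishing. These boundary winding numbers are derived invariants. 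This proves the Kalck--Yang invariance and reduces (1)$\Leftrightarrow$(2) of the first part to a model computation.

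Take the Kronecker-type quiver $\widetilde A_{p,q}$ with trivial grading, and the $n$-cycle with all length-two relations and a single arrow of degree $n$. In both models $d_+=d_-$. Conversely, the pairs $(p,q)$ of marked-point counts realize every annulus configuration with vanishing winding numbers. The exception is the case where one boundary has no marked points, which is realized by the cycle model. Invoking the derived-equivalence classification completes this part. The main obstacle is bookkeeping: getting the sign and normalization conventions of the winding-number formula right, so that the clock condition corresponds exactly to zero winding rather than to some shifted value. Boundary components without marked points also need separate care.

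For the second part, assume $A$ is locally finite and non-positively graded. The implication (2)$\Rightarrow$(1) follows from the general result of \cite{AMY19}, whose argument transfers to $\pvd A$ via \cite{KY18}: a silting-discrete category is discrete. For (1)$\Rightarrow$(3), the path algebra of $\widetilde A$ has $\mathbb P^1$-families of regular modules with the same dimension vector, so $\pvd$ of that algebra is not discrete, and discreteness is a derived invariant. The equivalence (3)$\Leftrightarrow$(4) follows from the first part. Here the cycle model with a positive-degree arrow is excluded, because a derived equivalent non-positively graded locally finite algebra has a connective model, and the cycle model gives a non-connective, non-locally-finite one. So among non-positively graded algebras the clock condition forces equivalence to trivially graded $\widetilde A$. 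Finally, for (4)$\Rightarrow$(2): if $d_+\neq d_-$, the surface is an annulus with nonzero boundary winding. By \cite{BPP18}, together with the surface classification, $\pvd A$ is then equivalent to a Bobiński--Geiß--Skowroński category $\D^b(\Lambda(r,n,m))$ or its graded analogue. Silting-discreteness is then verified as in \cite{AMY19}, by showing that two-term silting objects are finite up to shift through the explicit AR-quiver description.
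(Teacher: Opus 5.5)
Your first part follows the paper's route: surface model, compute $\eta_A$, apply the derived-equivalence classification. The second part, however, has a genuine gap: the implication ``graded clock condition fails $\Rightarrow$ derived-discrete'' is never proved.

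In your cycle (2)$\Rightarrow$(1)$\Rightarrow$(3)$\Leftrightarrow$(4)$\Rightarrow$(2), derived-discreteness is reached only from silting-discreteness, via ``silting-discrete $\Rightarrow$ discrete''. That implication is false in general; the known implication is the converse, which the paper uses via \cite{AM17,YY21}. For example, $k[x,y]/(x^2,y^2)$ is local, hence silting-discrete. Yet the modules $k^2$ with $x\mapsto aN$, $y\mapsto bN$ ($N\neq0$ nilpotent, $[a:b]\in\mathbb{P}^1$) are pairwise non-isomorphic indecomposables with the same dimension vector.

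Your step (4)$\Rightarrow$(2) is not supported either. Nothing in \cite{BPP18} identifies $\pvd A$ with a Bobi\'nski--Gei\ss--Skowro\'nski category, and in general no such identification exists. The Kronecker quiver with one arrow of degree $-2$ has one marked point on each boundary and winding numbers $\pm2$, and no trivially graded gentle one-cycle algebra realizes this invariant. The ``graded analogue'' you invoke is exactly what must be proved. The paper proves it on the representatives of Corollary~\ref{cor:standard-form}:
\begin{itemize}
\item In finite global dimension they are graded path algebras of type $\widetilde{A}$, handled by \cite[Theorem~A]{Fus26}.
\item In infinite global dimension the representative is $(1-e_0)A'(1-e_0)$ for a graded gentle $A'$ of finite global dimension with $\eta_{A'}=\eta_A\neq0$, so Proposition~\ref{prop:idempotent} and \cite[Theorem~1]{AH24} apply.
\item Infinite-dimensional locally finite $A$ is covered by Example~\ref{eg:n-cycle}, since the quivers $\wt{Q}^{[-n,0]}$ are disjoint unions of type $A$ quivers.
\end{itemize}

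Several other steps also need repair.

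Ruling out the cycle model for non-positively graded $A$ by connectivity is invalid, since connectivity is not a derived invariant. Also, the cycle model is finite-dimensional, hence locally finite, contrary to what you wrote. The correct reason is numerical. For non-positively graded $A$, an unmarked boundary component has winding number $\sum|\alpha_i|-n<0$. By local finiteness, a puncture has winding number $-\sum|\beta_i|>0$. So $\eta_A=0$ forces marked points on both boundaries, and Theorem~\ref{thm:classify-graded-gentle} then gives the trivially graded $\widetilde{A}$.

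In the first part, your formulas $w(\partial_1)=d_+$ and $w(\partial_2)=d_-$ are wrong as written. They already violate Poincar\'e--Hopf for the Kronecker quiver with one arrow of degree $-1$, where $d_+=1$ and $d_-=0$. Both sides of the cycle contribute to each boundary. After the paper's reductions, one gets $\eta_A(B_1)=-\eta_A(B_2)=\pm(d_+-d_-)$. Those reductions are deleting leaves of the tree part, and passing to $eAe$ to remove nonzero length-two paths through cycle vertices; neither changes $\eta_A$ or $d_\pm$. In the infinite-dimensional case, $d_-=0$ and $d_+$ is the winding number of the puncture.

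Finally, a boundary without marked points can be a puncture. With $\eta_A=0$, that case is the trivially graded oriented cycle (the $q=-1$ case of Corollary~\ref{cor:standard-form}(1)). It is smooth but not proper, so it is not derived equivalent to your cycle model, which is proper but not smooth.
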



To deduce \cref{thm:intro-main} from the gentle one-cycle case, we consider the finite-dimensional algebras $\wt{A}^{[-n,0]}$ with $\mod \wt{A}^{[-n,0]} \simeq \grmod_0^{[-n,0]} A$, where $\grmod_0^{[-n,0]} A$ denotes the category of finite-dimensional graded $A$-modules concentrated in degrees $[-n,0]$. If $A$ is derived-discrete, then so is each $\widetilde{A}^{[-n,0]}$ (see Proposition~\ref{prop:DD-DD}), and hence each of these truncations is covered by \cref{thm:intro-Vossieck}. A degeneration argument then transfers information from these truncations back to $A$ and yields strong restrictions on $A$ itself. Along this way, the main remaining difficulty is to exclude the Dynkin type $D$ case. For this purpose, we prove the following result on semi-orthogonal decompositions, which may be of independent interest.

\begin{thm}[see \cref{thm:SOD}]\label{thm:intro-SOD}
	In any semi-orthogonal decomposition $\D^b(\mod kD_n) = \T_1 \bot \T_2$, at least one of $\T_1, \T_2$ has no direct summand equivalent to $\D^b(\mod kD_l)$ for any $l \ge 4$.
\end{thm}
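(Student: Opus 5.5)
We plan to pass to Grothendieck groups and use rank counting together with the Euler form. Suppose $\D^b(\mod kD_n)=\T_1\bot\T_2$. It is well known that every admissible subcategory of $\D^b(\mod kQ)$, for $Q$ Dynkin, is generated by an exceptional sequence. Indeed, full exceptional sequences of $\D^b(\mod kQ)$ act transitively under the braid group and shifts, and every exceptional sequence extends to a full one. Hence each $\T_i\simeq\D^b(\mod H_i)$ with $H_i$ hereditary of Dynkin type, possibly disconnected. Moreover $K_0(\T_1)\oplus K_0(\T_2)=K_0(kD_n)\cong\ZZ^n$ is an orthogonal decomposition with respect to the symmetrized Euler form $(-,-)$, which is the $D_n$ Cartan form.

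Assume, for contradiction, that $\T_1$ has a summand $\D^b(\mod kD_l)$ and $\T_2$ has a summand $\D^b(\mod kD_m)$ with $l,m\ge4$. Then $K_0$ contains a root sublattice $D_l\perp D_m$ embedded in the root lattice $D_n$, with $l+m\le n$. The embedding is compatible with roots, since the classes of indecomposable objects are real roots, i.e.\ vectors $v$ with $(v,v)=2$. We then use the classical description of the $D_n$ root system: the roots are $\pm e_i\pm e_j$ in $\ZZ^n$.

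The key step is the classification of root subsystems of type $D_l$ in $D_n$. For $l\ge5$, every $D_l$ subsystem is standard, i.e.\ of the form $\{\pm e_i\pm e_j: i,j\in S\}$ with $|S|=l$. For $l=4$, one must also account for triality-type embeddings. Since $D_4$ has no non-standard embedding into $D_n$ for $n\ge5$ beyond those related by the automorphism of $D_n$, one checks that every $D_4$ inside $D_n$ still spans a coordinate sublattice up to signs and Weyl group action. An exception may occur for $n=4$ and $n=8$, where $D_4\perp D_4\subset D_8$ arises via spin. Even then, $D_4\perp D_4$ embeds in $D_8$ only standardly, with orthogonal coordinate sets, and these are sublattices of $D_8$.

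The standard orthogonal embedding $D_l\perp D_m\subset D_n$ on disjoint coordinate sets $S,S'$ is a genuine root subsystem, so orthogonality alone is not a contradiction. We must therefore also use the integrality and unimodularity of $K_0$. The non-symmetrized Euler form $\la-,-\ra$ on $K_0(kD_n)$ is unimodular, and its restrictions to $K_0(\T_1)$ and $K_0(\T_2)$ are unimodular and mutually semi-orthogonal. Hence $K_0(\T_1)\oplus K_0(\T_2)=K_0(kD_n)$ as lattices, and the symmetrized lattice is the orthogonal direct sum of the two symmetrized sublattices. The lattice $D_n$, with discriminant group of order $4$, then splits as an orthogonal sum of $L_1\supseteq D_l$ and $L_2\supseteq D_m$. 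Each $L_i$ is an orthogonal sum of Dynkin root lattices by the previous paragraph, so $L_i$ is itself a root lattice. Since $K_0(kD_n)$ is spanned by roots, we obtain $\Delta(D_n)=\Delta(L_1)\sqcup\Delta(L_2)$ with $\Delta(L_1)\perp\Delta(L_2)$. But the root system $D_n$ is irreducible, because its Dynkin diagram is connected, so it cannot be an orthogonal union of two nonempty root subsystems. This forces one of $\T_1,\T_2$ to be zero, contradicting the presence of $D_l$ and $D_m$ summands in both.

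The main obstacle is justifying that the roots of $D_n$ all lie in $K_0(\T_1)\cup K_0(\T_2)$. This argument would in fact prove that every SOD is trivial, which is false. So the roots of $D_n$ need not split: only the lattice splits, and a root may have components in both factors. The correct statement to prove is therefore the following: an orthogonal splitting of the lattice $D_n$ into two pieces, each containing a $D_{\ge4}$ root sublattice, is impossible. Here I would compute discriminants. Since $L_i\supseteq D_{l_i}$ with $\operatorname{rank}$ summing to $n$, we get $\operatorname{disc}(L_1)\operatorname{disc}(L_2)=4$. Each $L_i$ is an orthogonal sum of ADE lattices containing a $D$ summand of discriminant $4$, so $\operatorname{disc}(L_i)\ge4$. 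This gives a product of at least $16>4$, a contradiction. Making the step from ``contains $D_l$'' to ``has a $D$-type direct summand'' rigorous uses exactly the hypothesis that $\T_i$ has a summand $\D^b(\mod kD_l)$. Checking that the symmetrized forms are honest orthogonal sums is the point needing care.
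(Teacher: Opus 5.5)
Your reduction to $\T_i\simeq\D^b(\mod H_i)$ with $H_i$ of Dynkin type is fine. The argument breaks earlier than you think, at the claim that $K_0(\T_1)\oplus K_0(\T_2)$ is an orthogonal decomposition for the symmetrized form $(-,-)$.

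Semi-orthogonality gives only $\langle K_0(\T_1),K_0(\T_2)\rangle=0$. The reverse pairing $\langle K_0(\T_2),K_0(\T_1)\rangle$ is in general nonzero. So the Gram matrix of $\langle-,-\rangle$ is only block triangular, which is consistent with the unimodularity you note, while the Gram matrix of $(-,-)$ has a nonzero off-diagonal block.

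Here is a concrete instance. For $n\ge5$, orient $D_n$ so that the simple $S_1$ at the end of the long arm is projective. Then $\D^b(\mod kD_n)=\thick(S_1)\bot S_1^\bot$ with $S_1^\bot\simeq\D^b(\mod kD_{n-1})$, yet $([S_1],[S_2])=-1$. The discriminants give $2\cdot4=8\neq4$. So the identity $\operatorname{disc}(L_1)\operatorname{disc}(L_2)=4$ is false, and your counting would equally ``forbid'' this decomposition, which exists. Already for $A_2$, any decomposition into two copies of $\D^b(\mod k)$ gives $2\cdot2\neq3$.

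Your diagnosis of the obstacle is also reversed. Suppose the symmetrized lattice really were an orthogonal sum $L_1\perp L_2$ of even positive definite lattices. Then any root $v=v_1+v_2$ would satisfy $2=(v_1,v_1)+(v_2,v_2)$ with both terms even and nonnegative, so $v$ would lie in one $L_i$. Irreducibility of $D_n$ would then make every semi-orthogonal decomposition trivial. So the fact that your first argument proves too much refutes the orthogonal splitting itself. The discriminant argument rests on exactly that premise, so it is not a repair.

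What is missing is an invariant that sees the asymmetric part of the Euler form, and this is what the paper uses. Write $\W=\T_1\cap\mod kD_n$, so that $\T_2=\thick(\W^{\bot_{0,1}})$. Ingalls--Thomas then give the factorization $c=\cox(\W)\cox(\W^{\bot_{0,1}})$ of the Coxeter element in the absolute order of $W(D_n)$. Thus $\W^{\bot_{0,1}}$ corresponds to the Kreweras complement of the noncrossing partition of $\W$. The Coxeter element depends on the orientation and not only on the Cartan form, so it retains exactly the information your lattice computation discards.

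In the Athanasiadis--Reiner model of $\NC^D(n)$, a summand of type $D_{\ge4}$ in $\W$ forces a zero block in the corresponding partition. The combinatorial lemma is that the Kreweras complement of a partition with a zero block has no zero block. Hence $\W^{\bot_{0,1}}$ has no $D_{\ge4}$ summand. This is the step your approach would have to replace, and a symmetric lattice invariant such as the discriminant cannot do it.
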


The proof of Theorem~\ref{thm:intro-SOD} uses the theory of noncrossing partitions of type $D$~\cite{AR04} and the Ingalls--Thomas bijection between wide subcategories and noncrossing partitions~\cite{IT09}: the key point is that if one factor of a semi-orthogonal decomposition contains a $D$-type component of size $\geq 4$, then the corresponding noncrossing partition has a zero block, and a combinatorial lemma on Kreweras complements forces the other factor to contain no such component.

\bigskip
\noindent
{\bf Organization.}
In Section \ref{sec:2}, we recall some basic definitions and properties of gentle algebras, derived categories, and derived-discrete dg algebras. We also associate to each non-positively graded algebra $A$ a family of algebras $\widetilde{A}^{[-n,0]}$ obtained by truncating the grading of $A$. In Section \ref{sec:4}, we use the surface model to classify graded gentle one-cycle algebras up to derived equivalence and to characterize which of them are derived-discrete. In Section \ref{sec:3}, we use semi-orthogonal decompositions to characterize the bounded derived category of the path algebra of Dynkin type $D$. Finally, in Section \ref{sec:5}, we classify non-positively graded derived-discrete algebras.

\medskip

\noindent
{\bf Conventions and notation.}
For a path $p$ in a quiver, we denote by $s(p)$ its source vertex and by $t(p)$ its target vertex. We compose arrows from right to left. Unless otherwise stated, all graded algebras are locally finite over an algebraically closed field $k$, and all modules are right modules.

For a triangulated category $\T$ and its subset $\U\subseteq\T$, we put
\begin{itemize}
    \item $\add \U$: the minimal additive subcategory containing $\U$ and closed under direct summands,
    \item $\thick \U$: the minimal thick subcategory of $\T$ containing $\U$,
    \item $\U^\bot:=\{X\in\T\mid\Hom_\T(U,X)=0\text{ for every }U\in\U\}$,
    \item $\U^{\bot_{0,1}}:=\{X\in\T\mid\Hom_\T(U,X)=0=\Hom_\T(U,\Sigma X)\text{ for every }U\in\U\}$.
\end{itemize}
For two subsets $\U,\V\subseteq\T$, we put
\[
\U\ast\V:=\{X\in\T\mid\text{ there is an exact triangle } U\to X\to V\to\text{ such that }U\in\U\text{ and }V\in\V\}
\]

\bigskip
\noindent
\textbf{Acknowledgements.}
The first-named author would like to express his sincere gratitude to his supervisor Akira Ishii for his continuous support and encouragement. He also thanks Nao Mochizuki and Ryu Tomonaga for helpful conversations. The second-named author is deeply grateful to Aaron Chan for valuable discussions and for his support during the author's stay at Nagoya University. He would also like to thank Zhengfang Wang for generously sharing his knowledge of gentle algebras. The second-named author was supported by the China Scholarship Council (No. 202506040127).

\section{Preliminaries}\label{sec:2}

\subsection{Gentle one-cycle algebras and the clock condition}

Gentle algebras were introduced in~\cite{AS87} and form a classical class of algebras. In recent years, their study has seen renewed interest, particularly through surface models; see, for example,~\cite{OPS18,BS21}.

We now recall the definition of a gentle algebra.

\begin{dfn}
    Let $Q$ be a finite quiver and let $I$ be an ideal of $kQ$. The algebra $\Lambda=kQ/I$ is called a \emph{gentle algebra} if the following conditions hold:
\begin{enumerate}
    \item for each vertex $v\in Q_0$, there are at most two arrows starting at $v$ and at most two arrows ending at $v$;
    \item for each arrow $\alpha\in Q_1$, there is at most one arrow $\beta\in Q_1$ such that $\alpha\beta\notin I$, and at most one arrow $\gamma\in Q_1$ such that $\gamma\alpha\notin I$;
    \item for each arrow $\alpha\in Q_1$, there is at most one arrow $\beta\in Q_1$ such that $\alpha\beta\in I$, and at most one arrow $\gamma\in Q_1$ such that $\gamma\alpha\in I$;
    \item the ideal $I$ is generated by paths of length two.
\end{enumerate}
\end{dfn}

We call a gentle algebra $\Lambda=kQ/I$ a \emph{gentle one-cycle algebra} if the underlying graph of $Q$ contains exactly one cycle, and a graded algebra $\Lambda=kQ/I$ \emph{graded gentle} if its underlying ungraded algebra is gentle.

Now let $\Lambda=kQ/I$ be a gentle one-cycle algebra. We say that $\Lambda$ satisfies the \emph{clock condition} if, along the unique cycle in the underlying graph of $Q$, the number of relations oriented clockwise equals the number of relations oriented counterclockwise.

Note that finite-dimensional gentle one-cycle algebras which do not satisfy the clock condition provide a fundamental source of derived-discrete algebras; see Subsection~\ref{subsec:DD} for the relevant definitions. They have been studied in depth, including their classification and the structure of their derived categories; see, for example,~\cite{Vos01,BGS04}.

\subsection{Derived categories}

For a dg algebra $A$, we denote by $\D(A)$ the derived category of dg $A$-modules; we refer the reader to \cite{K94} for the definition and basic properties. We say that two dg algebras $A$ and $B$ are \emph{derived equivalent} if $\D(A)$ and $\D(B)$ are equivalent as triangulated categories. 

\begin{rmk}\label{rmk:derived equivalence}
    For a graded algebra $A$, we denote by $\D(A)$ the derived category of $A$ viewed as a dg algebra with trivial differential. Throughout this paper, two graded algebras $A$ and $B$ are said to be derived equivalent if they are derived equivalent as dg algebras.
\end{rmk}

\begin{dfn}
	Let $A$ be a dg algebra.
	\begin{itemize}
		\item The \emph{perfect derived category} of $A$ is $\per A:=\thick_{\D(A)} A$.
        \item The \emph{perfectly valued derived category} of $A$ is $\pvd A:=\{X\in\D(A)\mid \dim H^*X<\infty\}$.
        \item For a subset $I\subseteq\ZZ$, $\pvd^IA$ denotes the full subcategory of $\pvd A$ consisting of objects $M$ with $H^j(M)=0$ for all $j\notin I$.
		\item $A$ is \emph{connective} if $H^{>0}(A) = 0$.
		\item $A$ is \emph{locally finite} if $\dim H^i(A) < \infty$ for every $i \in \mathbb{Z}$.
	\end{itemize}
\end{dfn}

\begin{dfn}
	Let $A$ be a graded algebra.
	\begin{itemize}
		\item $\Grmod A$ (resp. $\grmod_0 A$) denotes the abelian category of graded $A$-modules (resp. finite-dimensional graded $A$-modules).
		\item For $M \in \Grmod A$ and $l \in \mathbb{Z}$, the \emph{grading shift} $M(l)$ is defined by $M(l)_i:=M_{i+l}$.
		\item For a subset $I\subseteq\ZZ$, $\Grmod^I A$ denotes the full subcategory of $\Grmod A$ consisting of modules $M$ with $M_j = 0$ for all $j \notin I$, and $\grmod_0^IA:=\Grmod^IA\cap\grmod_0 A$.
		\item We write $\Sigma(-1)$ for the autoequivalence $M \mapsto (\Sigma M)(-1)$ of $\D(\Grmod A)$, where $\Sigma$ denotes the cohomological shift.
	\end{itemize}
\end{dfn}

\begin{rmk}\label{rmk:t-strs}
    For a graded algebra $A$, we have a standard $t$-structure $\bigl(\D^{\le 0}(\Grmod A),\, \D^{\ge 0}(\Grmod A)\bigr)$, whose heart is $\Grmod A$.
    
    If $A$ is non-positively graded, then we have $\D(\Grmod^{\le0}A)^\bot=\D(\Grmod^{>0}A)$.
\end{rmk}

A locally-finite graded algebra $A = kQ/I$ may be regarded as a connective locally finite dg algebra with trivial differential. The following theorem relates 
\[
\D^b_{\grmod_0A}(\Grmod A):=\{X\in\D(\Grmod A)\mid \dim H^*X<\infty\}
\]
to $\pvd A$, the perfectly valued derived category of $A$ viewed as a dg algebra. This bridge will be used in the proof of Proposition \ref{prop:DD-DD}.

\begin{thm}\label{thm:KY}
	Let $A$ be a locally finite non-positively graded algebra. The functor taking total complexes induces a fully faithful functor
	\[
		\Tot \colon \D^b_{\grmod_0 A}(\Grmod A)/\Sigma(-1) \mono \pvd A,
	\]
	where $\D^b_{\grmod_0 A}(\Grmod A)/\Sigma(-1)$ is the orbit category of $\D^b_{\grmod_0 A}(\Grmod A)$ with respect to $\Sigma(-1)$, whose objects are those of $\D^b_{\grmod_0 A}(\Grmod A)$ and whose morphism spaces are given by
	\[
		\Hom_{\D^b_{\grmod_0 A}(\Grmod A)/\Sigma(-1)}(X, Y) := \bigoplus_{i \in \ZZ} \Hom_{\D^b_{\grmod_0 A}(\Grmod A)}(X,\Sigma^i Y(-i)).
	\]
    If $A$ is graded hereditary, then this embedding is essentially surjective.
\end{thm}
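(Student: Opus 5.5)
This is essentially the Kalck–Yang construction. The plan is to build a dg-level functor from complexes of graded modules to dg modules, check that it descends to the orbit category, and prove full faithfulness on generators. Essential surjectivity in the hereditary case will then come from a formality argument.

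\textbf{Step 1: the functor.} Given a complex $X=(X^{p},d)$ of graded $A$-modules, with each $X^p=\bigoplus_q X^{p}_q$, set
\[
\Tot(X)^n := \bigoplus_{p+q=n} X^p_q ,
\]
with differential induced by $d$. Here the internal degree $q$ is read as a cohomological degree. Since $A$ has trivial differential and is concentrated in degrees $\le 0$, the right action $X^p_q\otimes A_j\to X^p_{q+j}$ makes $\Tot(X)$ a dg $A$-module; one checks the Koszul signs. The functor is exact, preserves quasi-isomorphisms (each bigraded piece is preserved, using sums for complexes with finite total cohomology), and so descends to $\D(\Grmod A)\to\D(A)$. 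Finite total cohomology is preserved, so the image lies in $\pvd A$. Moreover $\Tot(\Sigma X(-1))\cong\Tot(X)$ canonically, since the two degree shifts cancel. Hence $\Tot$ factors through the orbit category, and on morphisms it is the sum of the maps $\Hom(X,\Sigma^iY(-i))\to\Hom(\Tot X,\Tot Y)$.

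\textbf{Step 2: full faithfulness.} $\D^b_{\grmod_0A}(\Grmod A)$ is the thick subcategory generated by the graded simples $S_v(l)$. Each finite-dimensional graded module has a finite filtration by them, and bounded complexes are obtained by truncation triangles. The orbit category need not be triangulated a priori, so I will argue by two-sided dévissage with the bifunctor
\[
(X,Y)\mapsto\bigoplus_i\Hom(X,\Sigma^iY(-i)).
\]
This bifunctor is cohomological in each variable on $\D^b$, and the comparison map is natural. By the five lemma it therefore suffices to check bijectivity on pairs of shifted simples $S_v(l)$, $S_w(m)$, together with their shifts. The graded Ext space is $\Ext^{j}_{\Grmod A}(S_v,S_w(m-l))$, and summing over $i$ collects all pairs $(j,\text{internal degree})$ with fixed $j+\text{degree}$. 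On the other side, $\Hom_{\D(A)}(S_v,\Sigma^n S_w)$ is computed from the bar/Koszul resolution. For a graded algebra with trivial differential, the total-degree-$n$ part of $\Ext_A$ in the dg sense is the sum of graded Ext groups over bidegrees with $j+q=n$. This is a spectral sequence that degenerates, by local finiteness and non-positivity: in each total degree only finitely many terms contribute, because the minimal graded projective resolution of $S_v$ has its $j$-th term generated in internal degrees $\ge -$(bounded by $j$ times the degree range). I expect this step to be the main obstacle. The delicate part is convergence and the direct-sum versus product issue in the dg Hom computation, and I would handle it by using the minimal semi-free resolution built from the graded one via $\Tot$.

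\textbf{Step 3: essential surjectivity when $A$ is graded hereditary.} Use the standard $t$-structure on $\pvd A$ for connective $A$, whose heart is $\mod H^0A$. Every object of $\pvd A$ has finitely many cohomologies and lies in the extension closure of $\Sigma^n$ of finite-dimensional $H^0A$-modules. Since the image of $\Tot$ is closed under shifts and contains these modules (concentrated in internal degree $0$), it suffices to show that the image is closed under extensions. With full faithfulness, this reduces to showing that the cone of a morphism between objects in the image lies in the image. Graded hereditary gives $\Ext^{\ge2}_{\Grmod A}=0$, so every object of $\D^b_{\grmod_0A}(\Grmod A)$ is a direct sum of shifted modules. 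A morphism in the orbit category decomposes into components of Ext-degree $0$ and $1$ only. I would then realise its cone explicitly in $\D^b(\Grmod A)$, after choosing representatives where each component lands in $\Hom$ or $\Ext^1$ between the appropriate twists. The cone of a map between stalk objects is again a graded module, via a kernel/cokernel or extension in $\Grmod A$, and applying $\Tot$ gives the required object.
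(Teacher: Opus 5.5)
Your Steps 1--2 take a more self-contained route than the paper. The paper imports Kalck--Yang's isomorphism $\Hom_{\D(\Grmod A)}(X,\bigoplus_i\Sigma^iY(-i))\cong\Hom_{\D(A)}(\Tot X,\Tot Y)$ (their Theorem 5.1(b)). It then uses the standard $t$-structure to see that only finitely many $i$ contribute when $X,Y$ have finite-dimensional total cohomology, so the sum can be pulled out of the $\Hom$. Essential surjectivity is simply quoted from their Theorem 5.1(e). Your d\'evissage to shifted graded simples via the semi-free resolution $\Tot(P)$ is a valid substitute. However, the finiteness comes from the opposite bound to the one you state. Because $A$ is non-positively graded, each $P^{-j}$ is generated in internal degrees $\le 0$. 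Hence in total degree $n$ only $0\le j\le n$ contribute to $\Hom_A(\Tot P,S_w)$, which is then a finite direct sum of the graded $\Hom$ complexes; no spectral sequence is needed. A lower bound on generator degrees gives no finiteness by itself.

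The genuine gap is in Step 3. In the orbit category every object is isomorphic to a graded module (since $\Sigma^pM\cong M(p)$). A morphism $M\to N$ is a pair $(f_0,f_1)\in\Hom_{\Grmod A}(M,N)\oplus\Ext^1_{\Grmod A}(M,N(-1))$. Re-choosing representatives of indecomposable summands turns this into a single morphism of $\D^b(\Grmod A)$ only if, on each pair of indecomposable summands, one of $f_0,f_1$ vanishes. Both can be nonzero on the same pair: for the Kronecker quiver with arrows in degrees $0$ and $-1$ there is a $2$-dimensional indecomposable $M$ with $\Ext^1_{\Grmod A}(M,M(-1))\neq0$. In that case no representative makes $f_0+f_1$ a morphism in $\D^b(\Grmod A)$. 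Since the orbit category is not yet known to be triangulated, you have no handle on the cone of $\Tot(f_0+f_1)$. So the claim that the cone of a map between stalk objects is again a graded module is unproved exactly where it is needed.

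The repair is to avoid arbitrary cones and induct along the standard $t$-structure of $\pvd A$. For $Z$ with top cohomology in degree $b$, use the triangle $\sigma^{\le b-1}Z\to Z\to\Sigma^{-b}H^bZ\to$. By induction $\sigma^{\le b-1}Z\cong\Tot M'$ with $M'$ concentrated in internal degrees $\le b-1$, and $\Sigma^{-b}H^bZ\cong\Tot(H^bZ(-b))$. The connecting morphism then has no $\Hom$-component, since $\Hom_{\Grmod A}(H^bZ(-b),M'(1))=0$ because $M'(1)$ lives in degrees $\le b-2$. Vanishing of $\Ext^{\ge2}_{\Grmod A}$ leaves only $\Ext^1_{\Grmod A}(H^bZ(-b),M')$. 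So the connecting morphism is $\Tot$ of an honest extension class, and $Z\cong\Tot E$ for the corresponding extension $E$ in $\Grmod A$.

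Alternatively, carry out the formality argument you announced at the start. Lift a length-one graded projective resolution $0\to P_1\to P_0\to H^*Z\to0$ to $Z$ using a null-homotopy; this gives $Z\simeq\Tot(H^*Z)$ directly.
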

\begin{proof}
    \cite[Theorem 5.1 (b)]{KY18} states that 
    \begin{align*}
        \Hom_{\D(\Grmod A)}\qty(X,\bigoplus_{i\in\ZZ}\Sigma^iY(-i))\simeq\Hom_{\D(A)}(\Tot X,\Tot Y)
    \end{align*}
    for every $X,Y\in\D(\Grmod A)$. If $X,Y\in\D^b_{\grmod_0 A}(\Grmod A)$, then Remark \ref{rmk:t-strs} shows that there is $n\ge0$ such that 
    \[
    \Hom_{\D(\Grmod A)}\qty(X,\bigoplus_{i\notin[-n,n]}\Sigma^iY(-i))=0.
    \]
    Thus, we have the desired isomorphism:
    \begin{align*}
        \bigoplus_{i \in \ZZ} \Hom_{\D(\Grmod A)}(X,\Sigma^i Y(-i))
        &=
        \bigoplus_{i \in [-n,n]} \Hom_{\D(\Grmod A)}(X,\Sigma^i Y(-i)) \\
        &=
        \Hom_{\D(\Grmod A)}\left(X,\bigoplus_{i \in [-n,n]} \Sigma^i Y(-i)\right) \\
        &=
        \Hom_{\D(\Grmod A)}\left(X,\bigoplus_{i \in \ZZ} \Sigma^i Y(-i)\right) \\
        &\simeq
        \Hom_{\D(A)}(\Tot X,\Tot Y).
    \end{align*}
    The last assertion follows from \cite[Theorem 5.1 (e)]{KY18}.
\end{proof}

\begin{rmk}
    $\Tot$ induces an embedding $\Tot\colon\ind\left(\D^b_{\grmod_0 A}(\Grmod A)\right)/\Sigma(-1)\mono\ind\left(\pvd A\right)$ by the argument in \cite[Section 3.1]{KY18}.
\end{rmk}

\subsection{Derived-discrete dg algebras}\label{subsec:DD} 

In this subsection, we define derived-discrete dg algebras and establish some of their basic properties.

\begin{dfn}\cite{BPP18}\label{dfn:derived-discrete}
    Let $\H$ be a bounded heart of $\T$ and $\sigma^i$ the associated cohomology functors. For an object $X$ of $\T$, define a function $v_X \colon \ZZ \to K_0(\H)$ by $v_X(i) = [\sigma^i(X)]$ for $i \in \ZZ$. We say that $\T$ is \emph{$\H$-discrete} if for any function $v \colon \ZZ \to K_0(\H)$, we have
    \begin{align*}
        \#\{X\in\ind \T\mid v_X=v\}<\infty.
    \end{align*}
\end{dfn}

\begin{thm}\cite[Theorem 7,9]{AMY19}\label{thm:H-discrete}
    Let $\H_1$ and $\H_2$ be two length hearts of $\T$. Assume that the number of isomorphism classes of simple objects in $\H_1$ (and hence in $\H_2$) is finite. Then $\T$ is $\H_1$-discrete if and only if it is $\H_2$-discrete.
\end{thm}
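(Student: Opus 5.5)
The strategy is to pass through a third, canonical notion of discreteness that does not refer to any heart. For each of $\H_1$ and $\H_2$ we show that $\H$-discreteness is equivalent to this notion.

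\textbf{Step 1: reduce to dimension vectors.} Since $\H_j$ is a length heart with finitely many simples $S_1,\dots,S_m$, the group $K_0(\H_j)\cong\ZZ^m$ is free on the classes of the simples. So $v_X$ records the composition multiplicities of all $\sigma^i(X)$. Boundedness of the heart means $v_X$ has finite support. Hence $\H_j$-discreteness says: for every finitely supported function $\ZZ\to\ZZ_{\ge0}^m$, only finitely many indecomposables realize it. Grouping by total length $\ell_j(X):=\sum_i \operatorname{length}\sigma^i_{\H_j}(X)$, there are only finitely many functions $v$ of a given total length up to the shift $\Sigma$. Thus $\H_j$-discreteness is equivalent to the following: for every $N$, the set $\{X\in\ind\T \mid \ell_j(X)\le N\}$ is finite up to shift, and each shift orbit meets each $v$ at most once. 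The second part is automatic, since $v_{\Sigma X}$ is a translate of $v_X$, and a nonzero $X$ has $v_X\neq v_{\Sigma^k X}$ for $k\ne0$.

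\textbf{Step 2: compare the length functions of the two hearts.} The key claim is that there is a constant $C$ with $\ell_2(X)\le C\,\ell_1(X)$ for all $X$, and symmetrically. Since $\ell_j$ is subadditive on triangles (via the long exact cohomology sequence), it suffices to bound $\ell_2(S)$ for each simple $S$ of $\H_1$. These are finitely many objects, so $C:=\max_S \ell_2(S)$ is finite, because every object has finite $\H_2$-length: the heart is bounded and length. Then, by induction along the filtration of $X$ by its $\H_1$-cohomologies and their composition series, $\ell_2(X)\le C\ell_1(X)$. The symmetric bound holds in the same way.

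\textbf{Step 3: conclude.} Suppose $\T$ is $\H_1$-discrete. Fix $v$ for $\H_2$, with total length $N$. Every $X$ with $v^{(2)}_X=v$ has $\ell_1(X)\le C'N$. By Step 1 there are finitely many such $X$ up to shift. The shift is pinned down because $v^{(2)}_X=v$ fixes the position of the support. So finiteness follows, and the converse holds by symmetry.

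\textbf{Main obstacle.} The delicate point is Step 1's claim that, for a fixed bound $N$, only finitely many $\H_1$-vectors occur up to shift. This is true for vector shapes, since there are finitely many compositions of at most $N$ into finitely supported pieces modulo translation. However, the shift normalization must be handled carefully. The support of $v^{(1)}_X$ may spread over a window whose width is unbounded a priori. In fact it is bounded by $N$, since each nonzero cohomology contributes at least $1$ to $\ell_1(X)$. Hence there are finitely many shapes, and the argument closes. The remaining step, subadditivity of $\ell_j$ on triangles, follows from the long exact sequence of $\sigma^i$ in a length category. That step is where finiteness of the simples is genuinely used, to make $C$ finite.
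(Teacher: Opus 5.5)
Your Step 2 is correct: subadditivity of $\ell_j$ on triangles and the finite constant $C=\max_S\ell_2(S)$ give $\ell_2\le C\ell_1$. The direction of Step 1 saying that finitely many indecomposables of bounded length up to shift implies discreteness is also fine. The proof breaks in Step 3, which uses the converse direction of Step 1, and that direction is false.

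First, there are not finitely many functions of a given total length up to translation: for instance $\delta_0+\delta_m$, $m\ge1$, all have total length $2$. Second, your repair in the last paragraph confuses the \emph{number} of nonzero cohomology degrees, which is indeed at most $N$, with the \emph{width} of the support. The width is unbounded, because an indecomposable can have cohomology with gaps.

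Here is a concrete counterexample. Take $\Lambda=k[x]/(x^2)$, which is derived-discrete by Vossieck's theorem (\cref{thm:Vossieck}): it is gentle with one loop and one relation, so it fails the clock condition. Put $\H=\mod\Lambda$. For $m\ge2$ let $X_m$ be the complex $\Lambda\xrightarrow{x}\Lambda\xrightarrow{x}\cdots\xrightarrow{x}\Lambda$ in degrees $0,\dots,m-1$.
\begin{itemize}
\item Its cohomology is $k$ in degrees $0$ and $m-1$ and zero elsewhere.
\item It is indecomposable: a splitting would force $X_m\cong k\oplus\Sigma^{-(m-1)}k$, which is not perfect, whereas $X_m$ is.
\end{itemize}
So $\ell(X_m)=2$ for every $m$, yet the $X_m$ lie in pairwise distinct shift orbits. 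Consequently the bound $\ell_1(X)\le C'N$ does not by itself leave finitely many candidates for $v^{(1)}_X$. The sentence ``by Step 1 there are finitely many such $X$ up to shift'' is therefore unjustified.

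What is missing is a uniform comparison of \emph{positions}, not just of lengths. Write $\T^{[a,b]}_{\H_1}$ for the objects whose $\H_1$-cohomology is concentrated in degrees $[a,b]$; this subcategory is extension-closed.
\begin{enumerate}
\item Each of the finitely many simples of $\H_2$ lies in some such window, and every object of $\H_2$ is an iterated extension of simples. Hence $\H_2\subseteq\T^{[a,b]}_{\H_1}$ for one fixed $[a,b]$. This uses the finiteness of the simples a second time.
\item Suppose $v$ is supported in $[c,d]$ and $v^{(2)}_X=v$. Then $X$ is an iterated extension of the objects $\Sigma^{-i}\sigma^i_{\H_2}(X)$ with $i\in[c,d]$, so $X\in\T^{[a+c,b+d]}_{\H_1}$.
\item Combined with your bound $\ell_1(X)\le C'N$, the vector $v^{(1)}_X$ ranges over functions supported in the fixed window $[a+c,b+d]$ with values in $\ZZ_{\ge0}^m$ of total sum at most $C'N$. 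This is a finite set.
\item $\H_1$-discreteness then gives only finitely many such $X$.
\end{enumerate}
With this replacement, the reduction modulo shift in Step 1 and the normalization in Step 3 become unnecessary. The paper itself does not prove this statement; it simply quotes \cite{AMY19}.
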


For a locally finite connective dg algebra $A$, the heart of the standard $t$-structure $(\pvd^{\le0}A,\pvd^{\ge0}A)$ can be identified with $\mod H^0(A)$.

\begin{dfn}\label{dfn:dd}
    Let $A$ be a locally finite connective dg algebra. We say that $A$ is \emph{derived-discrete} if $\pvd A$ is $(\mod H^0(A))$-discrete.
\end{dfn}

Note that when $A$ is a finite-dimensional algebra with the trivial grading, the above definition of derived-discreteness coincides with the classical definition of derived-discreteness in~\cite{Vos01}.

\begin{rmk}\label{rmk:dd-pre-under-der}
    By \cref{thm:H-discrete}, derived-discreteness is preserved under derived equivalences.
\end{rmk}

Vossieck's classification of derived-discrete algebras with the trivial grading can be stated as follows. Recall that an algebra $\Lambda$ is called \emph{piecewise hereditary} if it is derived equivalent to a hereditary algebra.

\begin{thm}\cite[Theorem 2.1]{Vos01}\label{thm:Vossieck}
    Let $\Lambda$ be a finite-dimensional algebra. Then the following conditions are equivalent:
    \begin{itemize}
        \item [(1)] $\Lambda$ is derived-discrete,
        \item [(2)] $\Lambda$ is either
        \begin{itemize}
            \item [(a)] a piecewise hereditary algebra of Dynkin type, 
            \item [(b)] a gentle one-cycle algebra not satisfying the clock condition.
        \end{itemize}
    \end{itemize}
\end{thm}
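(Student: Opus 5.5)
Vossieck's theorem, \cref{thm:Vossieck}, is cited from \cite[Theorem 2.1]{Vos01}, so a full proof would not be reproduced here. I would sketch the standard route and prove the two implications separately.

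For (2)$\Rightarrow$(1), first suppose $\Lambda$ is piecewise hereditary of Dynkin type $\Delta$. Then $\D^b(\mod\Lambda)\simeq\D^b(\mod k\Delta)$. Every indecomposable object of $\D^b(\mod k\Delta)$ is a shift of an indecomposable $k\Delta$-module, and by Gabriel's theorem there are only finitely many of these up to isomorphism. Hence there are only finitely many indecomposables up to shift, and the function $v_X$ detects the shift, so the category is discrete for the heart $\mod k\Delta$. By \cref{thm:H-discrete} it is then discrete for the heart $\mod\Lambda$ as well.

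Next suppose $\Lambda$ is gentle one-cycle and fails the clock condition. I would use the Bekkert--Merklen description of indecomposables in $\D^b(\mod\Lambda)$ for gentle algebras: they are string complexes and band complexes. A band complex winds around the unique cycle, and it exists precisely when the counts of clockwise and counterclockwise relations agree, i.e.\ exactly when the clock condition holds. So in our case there are only string objects. Each string is determined by a walk with homogeneity constraints, and there are finitely many such walks with a prescribed $v_X$, because $v_X$ bounds the length of the walk.

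For (1)$\Rightarrow$(2), the first step is to show that discreteness passes to $\mod\Lambda$, since $\mod\Lambda$ sits inside $\D^b(\mod\Lambda)$ as a heart. Consequently $\Lambda$ has no one-parameter families of modules of fixed dimension vector, so it is representation-finite in each dimension. The key reduction is that discreteness also passes to derived-equivalent algebras and to suitable convex subcategories and idempotent reductions. One then analyses the Euler form and the Tits form. If the underlying graph admits an Euclidean subdiagram with trivial relations, its null root gives a $\mathbb{P}^1$-family of indecomposables in the derived category, which contradicts discreteness.

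Vossieck's actual argument instead uses degenerations. Discreteness forces every repetitive category $\hat\Lambda$ to be locally representation-finite, and Assem--Happel--Skowroński-type results then classify the possibilities: either $\hat\Lambda$ is the repetitive category of a Dynkin tree algebra, which gives piecewise hereditary of Dynkin type, or it comes from a gentle one-cycle algebra. In the latter case the clock condition would produce a band and hence a $k^\times$-family of indecomposables, so it must fail.

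The main obstacle is this last classification step, namely controlling locally representation-finite repetitive algebras via Galois coverings. Here I would simply invoke the literature, citing Happel's theorem $\D^b(\mod\Lambda)\simeq\underline{\mod}\,\hat\Lambda$ together with the covering techniques of Gabriel and of Bongartz--Gabriel, rather than redo the argument.
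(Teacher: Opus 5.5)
The paper does not prove this statement; it quotes \cite[Theorem 2.1]{Vos01} and uses it as a black box, so a citation is all that is expected. Your direction (2)$\Rightarrow$(1) is fine in the Dynkin case. In the gentle case the strategy (no bands when the clock condition fails, finitely many strings for each $v_X$) is right. However, the justification ``$v_X$ bounds the length of the walk'' fails when $\Lambda$ has infinite global dimension. Then $\D^b(\mod\Lambda)\simeq K^{-,b}(\mathrm{proj}\,\Lambda)$ contains infinite homotopy strings. For example, $\Lambda=k[x]/(x^2)$ is gentle one-cycle, violates the clock condition, and its simple module is an infinite string. What $v_X$ actually controls is the multiplicity $\dim\Hom(X,\Sigma^nS_j)$ of $P_j$ in each degree of the minimal complex. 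To get finiteness you also need that infinite strings are eventually periodic.

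The genuine gap is in (1)$\Rightarrow$(2). The repetitive-category route rests on a false step: discreteness does not force $\hat\Lambda$ to be locally representation-finite. By Gabriel's covering theory, $\hat\Lambda$ is locally representation-finite iff $T(\Lambda)=\hat\Lambda/\nu$ is representation-finite. By Hughes--Waschb\"usch and Assem--Happel--Rold\'an, this holds iff $\Lambda$ is iterated tilted of Dynkin type. So for every algebra in case (b), $\hat\Lambda$ is locally representation-infinite. For instance, $T(k[x]/(x^2))\cong k[x,y]/(x^2,y^2)$ has the one-parameter family of pairwise non-isomorphic $2$-dimensional modules $k[x,y]/(x^2,y^2,y-\lambda x)$, $\lambda\in k$. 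Your dichotomy would therefore exclude exactly the gentle one-cycle algebras the theorem is about. Moreover, Happel's equivalence $\D^b(\mod\Lambda)\simeq\underline{\mod}\,\hat\Lambda$ requires finite global dimension, so algebras like $k[x]/(x^2)$ are not reached at all.

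The alternative Tits-form sketch does not close the gap either. Ruling out relation-free Euclidean subquivers yields neither gentleness nor the one-cycle property. It also cannot produce a derived equivalence to a Dynkin hereditary algebra, which is what (a) asserts; note that non-gentle algebras such as the commutative square are piecewise hereditary of type $D_4$. As written, (1)$\Rightarrow$(2) is unproved. You should either cite \cite{Vos01} outright, as the paper does, or supply an argument that actually classifies the derived-discrete algebras.
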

The main result of this paper, \cref{thm:main}, establishes a graded analogue of \cref{thm:Vossieck}, classifying derived-discrete locally finite graded algebras in terms of their graded structure.

The following result shows that derived-discreteness is inherited by idempotent subalgebras.

\begin{prop}\label{prop:idempotent}
    Let $A$ be a derived-discrete locally finite connective dg algebra. For every idempotent $e\in H^0(A)$, the dg algebra $eAe:=\REnd_A(eA)$ is also derived-discrete, where $eA\in\add_{\D(A)} A$ corresponds to the idempotent $e$.
\end{prop}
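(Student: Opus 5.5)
The natural approach is to realize $\pvd(eAe)$ as a full triangulated subcategory of $\pvd A$, via a fully faithful functor that is compatible with cohomology dimension vectors. Discreteness of $\pvd A$ then transfers directly.

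Put $B:=eAe=\REnd_A(eA)$. Since $eA$ is a direct summand of $A$, $B$ is again locally finite and connective, with $H^i(B)=eH^i(A)e$ and $H^0(B)=eH^0(A)e$.

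Consider the functor $F:=-\otimes^{\LL}_B eA\colon \D(B)\to\D(A)$, left adjoint to $\RR\Hom_A(eA,-)=(-)e$. The unit $M\to (M\otimes^{\LL}_B eA)e$ is an isomorphism for $M=B$, because $\RR\Hom_A(eA,eA)=B$. Both sides commute with coproducts and shifts, so the unit is an isomorphism for all $M$, and $F$ is fully faithful. Moreover $H^*(FM)$ is finite dimensional when $H^*M$ is: $M\in\pvd B$ lies in $\thick$ of truncations built from simple $H^0(B)$-modules, using connectivity and the standard $t$-structure. So it suffices to check this on simples. For a simple $S$ of $H^0(B)=eH^0(A)e$, $F(S)$ is the image of $S$ under the intermediate extension to $H^0(A)$ in degree $0$, and more precisely $H^0(FS)=S\otimes_{eH^0Ae}eH^0A$. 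I expect this finiteness to be the main obstacle: for infinite-dimensional $A$ the higher cohomology of $F(S)$ need not be finite dimensional in general.

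A cleaner route, which I would try first, is to avoid $F$ and use the right adjoint $G=(-)e\colon\pvd A\to\pvd B$, which clearly preserves finite-dimensional cohomology. I would then show that every indecomposable $M\in\pvd B$ is $G(X)$ for some indecomposable $X\in\pvd A$, chosen functorially up to finite ambiguity. To construct $X$, I would use the recollement induced by $e$: take the $t$-structure/intermediate extension $X:=\tau_{\ge ?}$-truncation of $FM$ relative to the kernel $\{Y\mid Ye=0\}$. Concretely, choose the minimal $X$ with $Xe\cong M$ obtained by killing the subobjects and quotients in $\ker G$ in each cohomological degree, using a Postnikov-type argument on the bounded $t$-structure of $\pvd A$ whose heart is $\mod H^0(A)$.

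With such an $X$, the function $v_X$ takes values in $K_0(\mod H^0A)$, and $v_M=\pi\circ v_X$, where $\pi$ keeps the multiplicities of the simples not killed by $e$. The components of $v_X$ at simples killed by $e$ must then be bounded in terms of $v_M$, and I would attempt to derive this from the minimality of $X$ together with the local finiteness of $A$.

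Given this bound, fix $v\colon\ZZ\to K_0(\mod H^0B)$. The relevant $X$ have only finitely many possible dimension functions, so derived-discreteness of $A$ gives only finitely many such $X$, and hence only finitely many indecomposable $M=Xe$. Therefore $B$ is derived-discrete.
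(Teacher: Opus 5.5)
There is a genuine gap. Your second route leaves undone the two steps that carry the content of the proposition. The first is constructing, for each indecomposable $M\in\pvd B$, some $X\in\pvd A$ with $Xe\simeq M$. The second is bounding the multiplicities in $v_X$ of the simples killed by $e$ in terms of $v_M$. The ``minimal $X$ obtained by killing subobjects and quotients in $\ker G$ in each cohomological degree'' is not a well-defined operation on objects of $\pvd A$: intermediate extensions make sense for objects of a heart, not for arbitrary complexes. The bound is only something you ``would attempt to derive''. Without it, derived-discreteness of $A$ cannot be applied at all.

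The missing idea is that your first route already works after one truncation, and you misdiagnosed its obstacle. For $A$ connective and locally finite, each single group $H^kF(S)$ is finite-dimensional. Indeed, up to an object of $\D^{\le -m-1}(B)$, $S$ is an iterated extension of finitely many $\Sigma^iB$ with $0\le i\le m$, by connectivity and local finiteness of $B$. Moreover $-\ten^{\LL}_B eA$ preserves $\D^{\le -m-1}$ because $eA$ is connective, and $eA$ is locally finite. What can fail is only that $H^*F(S)$ is nonzero in infinitely many negative degrees, which already happens for finite-dimensional $A$.

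So for $M\in\pvd^{[-l,0]}(B)$ put $X:=\sigma^{\ge -l}F(M)$, which lies in $\pvd A$. Since $H^k(Ye)=H^k(Y)e$, the functor $(-)e$ commutes with truncations. Your unit isomorphism then gives $Xe\simeq\sigma^{\ge -l}(F(M)e)\simeq M$. If $M$ is indecomposable, some indecomposable summand of $X$ already restricts to $M$, by Krull--Schmidt.

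The bound comes from filtering $M$ by shifted simples, which gives $\sdim H^kF(M)\le\sum_{n}\sum_j\dim H^nM(j)\cdot\sdim H^kF(\Sigma^{-n}S_j)$ as in \cite[1.1]{Vos01}. Hence, for a fixed dimension function of $M$, the dimension vectors of $X$ in degrees $[-l,0]$ range over a finite set. A pigeonhole argument then turns infinitely many indecomposables of $\pvd B$ with a common dimension function into infinitely many pairwise non-isomorphic indecomposables of $\pvd A$ with a common dimension function. This truncation-plus-estimate argument is exactly the paper's proof.
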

\begin{proof}
	We assume that $eAe$ is not derived-discrete, and deduce that $A$ is not derived-discrete. Consider the functor $F:=-\ten^\LL_{eAe}eA\colon\D(eAe)\to\D(A)$. By \cite[1.1]{Vos01}, for every $X\in\pvd(eAe)$, we have
	\[
		\sdim H^kF(X) \le \sum_{n\in\ZZ}\sum_j\dim H^nX(j)\cdot\sdim H^kF(\Sigma^{-n}S_j).
	\]
    Note that the right-hand side is a finite sum since $X \in \pvd(eAe)$.
	Since $eAe$ is not derived-discrete, there exist an integer $l>0$, an infinite index set $I$, and a family $\{X_i\}_{i\in I}\subseteq\pvd^{[-l,0]}(eAe)$ of pairwise non-isomorphic objects such that the dimension vector $\sdim H^nX_i\in K_0(\mod H^0(eAe))$ is independent of $i$ for all $n\in[-l,0]$.

	By the above argument and a pigeonhole argument, there exists an infinite subset $I'\subseteq I$ such that $\sdim H^kF(X_i)$ is independent of $i\in I'$ for all $k\in[-l,0]$.

	Since $A$ is connective and locally finite, the truncation $\sigma^{\ge-l}F(X_i)$ has cohomology concentrated in $[-l,0]$ with each cohomology group finite-dimensional, so $\sigma^{\ge-l}F(X_i)\in\pvd(A)$. Since the exact functor $(-)e\colon\D(A)\to\D(eAe)$ commutes with truncations and $F(X_i)e\simeq X_i\in\pvd^{[-l,0]}(eAe)$, we have
	\[
		\bigl(\sigma^{\ge-l}F(X_i)\bigr)e \simeq \sigma^{\ge-l}\bigl(F(X_i)e\bigr) \simeq F(X_i)e \simeq X_i.
	\]
	Hence the family $\{\sigma^{\ge-l}F(X_i)\}_{i\in I'}\subseteq\pvd(A)$ is pairwise non-isomorphic. Moreover, $\sdim H^k(\sigma^{\ge-l}F(X_i))=\sdim H^kF(X_i)$ for $k\in[-l,0]$ is independent of $i\in I'$. It follows that $A$ is not derived-discrete.
\end{proof}

\subsection{Algebras associated with a graded algebra}
In this subsection, we fix a locally finite non-positively graded algebra $A$ and associate to it a family of ordinary finite-dimensional algebras, which will be used in the proof of \cref{thm:main}.

For each $n\ge0$, the module $\bigoplus_{l=0}^nA(l)_{\ge-n}$ is a progenerator of $\grmod_0^{[-n,0]}A$, so the following definition yields an algebra controlling this category.

\begin{dfn}
	For $n\ge0$, define $\wt{A}^{[-n,0]}:=\End_{\Grmod A}\qty(\bigoplus_{l=0}^nA(l)_{\ge-n})$. This is an ordinary finite-dimensional algebra.
\end{dfn}

We have an equivalence
\[
	\Hom_{\Grmod A}\qty(\bigoplus_{l=0}^nA(l)_{\ge-n},-)\colon\grmod_0^{[-n,0]}A\simeq\mod\wt{A}^{[-n,0]}.
\]

\begin{lem}
    Let $n\ge0$.
    The inclusion $\grmod_0^{[-n,0]}\mono\Grmod A$ induces a fully faithful functor 
    \[
    \D^b(\grmod_0^{[-n,0]}A)\mono\D^b_{\grmod_0A}(\Grmod A).
    \]
\end{lem}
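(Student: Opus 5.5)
We will deduce full faithfulness from a comparison of Ext groups. The functor in question is the derived functor of the exact inclusion $\iota\colon\grmod_0^{[-n,0]}A\mono\Grmod A$. Since $\iota$ is exact, it extends termwise to bounded complexes and preserves quasi-isomorphisms. Its image lies in $\D^b_{\grmod_0A}(\Grmod A)$, because the cohomology of a bounded complex of finite-dimensional modules is finite-dimensional.

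By a standard d\'evissage, it suffices to show that $\iota$ induces isomorphisms
\[
\Ext^i_{\grmod_0^{[-n,0]}A}(M,N)\xto{\sim}\Ext^i_{\Grmod A}(M,N)
\]
for all $M,N\in\grmod_0^{[-n,0]}A$ and all $i\ge0$. Both sides can be computed via Yoneda extensions, and every object of $\D^b(\grmod_0^{[-n,0]}A)$ is built from its cohomology by finitely many triangles. The five lemma then promotes the Ext isomorphisms to isomorphisms on all Hom spaces between bounded complexes. Under these isomorphisms the map in degree $0$ is the identity, and the map in degree $1$ is injective because $\grmod_0^{[-n,0]}A$ is closed under extensions in $\Grmod A$: the middle term of a short exact sequence is finite-dimensional and concentrated in degrees $[-n,0]$.

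The key step is to compare resolutions, using the non-positivity of $A$. Let $N\in\grmod_0^{[-n,0]}A$, and choose an injective resolution of $N$ in $\Grmod A$ by modules of the form $D(A)(l)$ with $l$ suitable, where $D$ is the graded dual. Since $A$ is non-positively graded, $D(A)$ is concentrated in non-negative degrees. The cleaner approach is to use the truncation functor $(-)_{\ge -n}$, which is right adjoint to the inclusion $\Grmod^{\ge-n}A\mono\Grmod A$. Note that $M_{\ge -n}$ is a submodule because $A$ acts by non-positive degrees, so multiplication can only lower degrees. The truncation functor is exact, since it is computed degreewise, and $M$ is concentrated in degrees $\ge -n$.

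Hence, for $X$ injective in $\Grmod A$, the module $X_{\ge-n}$ is injective in $\Grmod^{\ge-n}A$, because its right adjoint preserves injectives when the left adjoint is exact. Moreover, $\Hom_{\Grmod A}(M,X)=\Hom(M,X_{\ge-n})$. Applying the truncation to an injective resolution of $N$ gives a resolution of $N_{\ge-n}=N$ by injectives of $\Grmod^{\ge-n}A$. This shows that $\Ext$ computed in $\Grmod A$ agrees with $\Ext$ computed in $\Grmod^{\ge-n}A$.

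For the upper bound, we use dually that $\Grmod^{\le0}A$ is closed under quotients and submodules, and that $M_{\le 0}$ is a quotient of $M$. The inclusion $\Grmod^{\le0}\mono\Grmod$ has the exact left adjoint $M\mapsto M/M_{>0}$. Projective resolutions of $M$ by shifts $A(l)$ with $l\ge0$ already lie in degrees $\le0$. Hence $\Ext$ in $\Grmod^{[-n,0]}A$ agrees with $\Ext$ in $\Grmod A$, in the form $\Hom(P_\bullet, N)$ with each $P_j=\bigoplus A(l)$, $l\ge0$. We then truncate by $(-)_{\ge-n}$, using $\Hom(A(l),N)=\Hom(A(l)_{\ge-n},N)$, which gives a resolution by the progenerator summands $A(l)_{\ge-n}$. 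Its exactness follows from the exactness of truncation.

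The main obstacle is passing from the abelian category $\Grmod^{[-n,0]}A$ to its finite-dimensional part $\grmod_0^{[-n,0]}A$. We handle it as follows. The modules $A(l)_{\ge-n}$ with $0\le l\le n$ are finite-dimensional by local finiteness, and $A(l)_{\ge -n}=0$ for $l>n$. These modules are projective in $\grmod_0^{[-n,0]}A$ by the progenerator statement preceding the lemma. Therefore the truncated resolution is a projective resolution inside $\grmod_0^{[-n,0]}A\simeq\mod\wt{A}^{[-n,0]}$, and the two $\Ext$ groups coincide as computed from the same complex $\Hom(P_{\bullet\,\ge-n},N)$.
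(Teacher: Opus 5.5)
Your closing argument is exactly the paper's proof, and it is correct. You resolve $M$ in $\Grmod A$ by sums of shifts $A(l)$ with $l\ge0$. Applying the exact degreewise truncation $(-)_{\ge-n}$ gives a resolution of $M$ by the projectives $A(l)_{\ge-n}$, $0\le l\le n$, of $\grmod_0^{[-n,0]}A$, and $\Hom_{\Grmod A}(P_\bullet,N)\cong\Hom((P_\bullet)_{\ge-n},N)$ as complexes. Tacitly, as in the paper, each $P_j$ must contain only finitely many copies of each $A(l)$; this holds for a minimal resolution, because the syzygies are locally finite and bounded above. Together with your d\'evissage, this proves the lemma.

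The middle paragraphs, however, contain false statements. Delete them, since the final step does not use them.
\begin{itemize}
\item Because $A$ is non-positively graded, right multiplication lowers degrees. So $M_{<-n}$ and $M_{\le0}$ are submodules, whereas $M_{\ge-n}=M/M_{<-n}$ is a quotient, not a submodule. Also $M_{>0}$ is not a submodule, so $M/M_{>0}$ is meaningless.
\item Consequently $(-)_{\ge-n}$ is \textit{left} adjoint to the inclusion $\Grmod^{\ge-n}A\mono\Grmod A$, not right adjoint. This is exactly why $\Hom(A(l),N)\cong\Hom(A(l)_{\ge-n},N)$ holds, which you correctly use at the end.
\item Truncation does not preserve injectives. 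Take $A=k[x]/(x^2)$ with $\deg x=-1$. Then $D(A)(n+1)_{\ge-n}$ is the simple module in degree $-n$, which is not injective in $\Grmod^{\ge-n}A$: it is a proper essential submodule of $D(A)(n)$.
\item The same example refutes $\Hom(M,X)=\Hom(M,X_{\ge-n})$: take $M$ to be this simple and $X=D(A)(n+1)$; the left side is $0$ and the right side is $k$.
\item In your $\Ext^1$ remark, closure under extensions gives surjectivity of $\Ext^1_{\grmod_0^{[-n,0]}A}(M,N)\to\Ext^1_{\Grmod A}(M,N)$, while injectivity comes from fullness. The remark is superfluous anyway.
\end{itemize}
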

\begin{proof}
    It suffices to show that for $M,N\in\grmod_0^{[-n,0]}A$, the natural map $\Ext^i_{\grmod_0^{[-n,0]}A}(M,N)\to\Ext^i_{\Grmod A}(M,N)$ is an isomorphism for all $i\ge0$. Since $A$ is locally finite and non-positively graded, $M$ admits a projective resolution
    \[
    \cdots P^{-2}\to P^{-1}\to P^0\to M\to 0
    \]
    in $\Grmod A$ with $P^i\simeq\bigoplus_{k\ge0}A(k)^{n_{k,i}}$ for some $n_{k,i}\ge0$. Truncating each term, we obtain a projective resolution
    \[
    \cdots (P^{-2})_{\ge-n}\to (P^{-1})_{\ge-n}\to (P^0)_{\ge-n}\to M\to 0
    \]
    in $\grmod_0^{[-n,0]}A$. Since we have an isomorphism $\Hom_{\grmod_0^{[-n,0]}A}((P^{-i})_{\ge-n},N)\simeq\Hom_{\Grmod A}(P^{-i},N)$ for every $i\ge0$, we obtain the desired isomorphism
    \[
    \Ext^i_{\grmod_0^{[-n,0]}A}(M,N)\to\Ext^i_{\Grmod A}(M,N)
    \]
    for every $i\ge0$.
\end{proof}

The following proposition will be used to prove \cref{thm:main}
\begin{prop}\label{prop:DD-DD}
    If $A$ is derived-discrete, then $\wt{A}^{[-n,0]}$ is a derived-discrete algebra for every $n\ge0$.
\end{prop}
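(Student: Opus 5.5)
We prove the contrapositive: if $\wt{A}^{[-n,0]}$ is not derived-discrete for some $n\ge0$, we produce infinitely many pairwise non-isomorphic indecomposables of $\pvd A$ with a common cohomology dimension function. The transfer goes through the chain of functors
\[
\D^b(\mod\wt{A}^{[-n,0]})\simeq\D^b(\grmod_0^{[-n,0]}A)\mono\D^b_{\grmod_0A}(\Grmod A)\xto{\ \pi\ }\D^b_{\grmod_0 A}(\Grmod A)/\Sigma(-1)\overset{\Tot}{\mono}\pvd A .
\]
The first equivalence is the Morita equivalence $\mod\wt{A}^{[-n,0]}\simeq\grmod_0^{[-n,0]}A$, the second functor is fully faithful by the preceding lemma, and $\Tot$ is fully faithful by \cref{thm:KY}.

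First, non-discreteness of $\wt{A}^{[-n,0]}$ gives a function $v\colon\ZZ\to K_0(\mod\wt{A}^{[-n,0]})$ and an infinite family $\{X_i\}_{i\in I}$ of pairwise non-isomorphic indecomposables of $\D^b(\mod\wt{A}^{[-n,0]})$ with $v_{X_i}=v$. Since $v$ has finite support, all $X_i$ have cohomology in a fixed window $[a,b]$. We view each $X_i$ in $\D^b_{\grmod_0A}(\Grmod A)$; there it remains indecomposable with cohomology in $[a,b]$ and internal degrees in $[-n,0]$. Its image $\Tot X_i$ is indecomposable by the remark after \cref{thm:KY}.

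Next we compare dimension vectors. The heart of $\pvd A$ is $\mod H^0(A)=\mod A_0$. For a graded module $M$ placed in cohomological degree $j$, $\Tot$ has cohomology $H^{j+l}(\Tot M)\cong M_l$ as $A_0$-modules. Hence the class of $H^m(\Tot X_i)$ in $K_0(\mod A_0)$ is
\[
\sum_{j+l=m}\bigl[H^j(X_i)_l\bigr].
\]
This is determined by the composition factors of the graded modules $H^j(X_i)$, that is, by the class of $H^j(X_i)$ in $K_0(\grmod_0^{[-n,0]}A)$. Simples of $\grmod_0^{[-n,0]}A$ are shifts $S(l)$ of simple $A_0$-modules with $l\in[0,n]$, so this $K_0$ is identified with $K_0(\mod\wt{A}^{[-n,0]})$. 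Therefore $v_{\Tot X_i}$ depends only on $v$ and is the same for all $i$.

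The main obstacle is showing that infinitely many of the $\Tot X_i$ are pairwise non-isomorphic. Since $\Tot$ is fully faithful on the orbit category, $\Tot X_i\cong\Tot X_{i'}$ forces $X_{i'}\cong\Sigma^p X_i(-p)$ in $\D^b_{\grmod_0A}(\Grmod A)$ for some $p$. Indeed, an isomorphism in the orbit category between indecomposables has a component that is an isomorphism, by locality of endomorphism rings; I would make this precise using the remark on $\ind(\cdot)/\Sigma(-1)$. But $\Sigma^p X_i(-p)$ has internal degrees shifted by $p$ and cohomology shifted by $p$. For both to stay in $[-n,0]\times[a,b]$ we need $|p|\le\min(n,b-a)$. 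Each $X_i$ is therefore identified with at most $2n+1$ others, and an infinite subfamily remains pairwise non-isomorphic in $\pvd A$. This contradicts derived-discreteness of $A$ with respect to the heart $\mod H^0(A)$, which completes the proof.
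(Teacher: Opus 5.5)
Your proposal is correct and follows essentially the same route as the paper. You use the same chain $\D^b(\mod\wt{A}^{[-n,0]})\simeq\D^b(\grmod_0^{[-n,0]}A)\mono\D^b_{\grmod_0A}(\Grmod A)\to\D^b_{\grmod_0A}(\Grmod A)/\Sigma(-1)\xto{\Tot}\pvd A$, together with the observation that only boundedly many shifts $\Sigma^pX(-p)$ stay inside the window, so the family stays infinite up to isomorphism. The differences are minor: you compute $v_{\Tot X_i}$ exactly via $H^m(\Tot X)\cong\bigoplus_{j+l=m}H^j(X)_l$ as $A_0$-modules, where the paper invokes the dimension estimate of \cite[1.1]{Vos01} with an implicit pigeonhole, and your restriction to indecomposables makes precise the step ``isomorphic in the orbit category implies isomorphic up to a single shift''.
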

\begin{proof}
	We assume that $\wt{A}^{[-n,0]}$ is not derived-discrete for some $n\ge0$, and deduce that $A$ is not derived-discrete. Then, there exist an infinite index set $I$ and a family $\{M_i\}_{i\in I}\subseteq\D^b(\mod\wt{A}^{[-n,0]})$ of pairwise non-isomorphic objects such that the dimension vector $\sdim H^k(M_i)$ is independent of $i$ for every $k\in\ZZ$. Let $p_n\colon\D^b(\mod\wt{A}^{[-n,0]})\to\D^b_{\grmod_0A}(\Grmod A)/\Sigma(-1)$ be the composition
	\begin{align*}
		\D^b(\mod\wt{A}^{[-n,0]})\simeq\D^b(\grmod_0^{[-n,0]}A)\mono\D^b_{\grmod_0A}(\Grmod A)\to\D^b_{\grmod_0A}(\Grmod A)/\Sigma(-1).
	\end{align*}
	Since $p_n(M_i)\simeq p_n(M_j)$ implies $M_i\simeq\Sigma^{-l}M_j(l)$ for some $l\in\ZZ$, and since $\Sigma^{-l}M_j(l)\notin\D^b(\grmod_0^{[-n,0]}A)$ for $l\notin[0,n]$, each isomorphism class in the image of $p_n$ contains at most $n+1$ elements of the family. Since $I$ is infinite, the family $\{p_n(M_i)\}_{i\in I}$ has infinitely many isomorphism classes. 

    By \cref{thm:KY}, we have a fully-faithful functor
    \begin{align*}
        \Tot\colon\D^b_{\grmod_0A}(\Grmod A)/\Sigma(-1)\mono\pvd A.
    \end{align*}
    Applying \cite[1.1]{Vos01} to $\Tot\circ p_n$, we deduce that $A$ is not derived-discrete.
\end{proof}

For a locally finite non-positively graded algebra $A\simeq kQ/I$, we give an explicit quiver with relations presentation of $\wt{kQ/I}^{[-n,0]}$.

\begin{rmk}
	Let $A$ be a locally finite non-positively graded algebra. If $A_0/\rad A^0=k^n$, then $A\simeq kQ/I$ for some locally finite non-positively graded quiver $Q$ and a graded ideal $I\subseteq kQ_{\ge2}$, analogously to the finite-dimensional case.
\end{rmk}

\begin{dfn}
    Let $n\ge 0$.
    We define a quiver $\wt{Q}^{[-n,0]}$ by
    \begin{itemize}
        \item $\wt{Q}^{[-n,0]}_0=\{(i,l)\mid i\in Q_0,\ l\in[-n,0]\}$,
        \item $\wt{Q}^{[-n,0]}_1=\{(i,l)\xto{\alpha_l}(j,l+\deg\alpha)\mid i\xto{\alpha}j,\ \{l,l+\deg\alpha\}\subseteq[-n,0]\}$
    \end{itemize}
\end{dfn}

It is straightforward to see that $\wt{kQ}^{[-n,0]}\simeq k\wt{Q}^{[-n,0]}$.

\begin{eg}
    If $Q=$
    \begin{tikzcd}
        1\ar[r,yshift=0.7ex]\ar[r,yshift=-0.7ex,"-1"'] & 2\ar[r] & 3
    \end{tikzcd},
    then
    $\wt{Q}^{[-2,0]}=$
    \begin{tikzcd}[row sep=0.5em]
        (1,0)\ar[r]\ar[rd] & (2,0)\ar[r] & (3,0) \\
        (1,-1)\ar[r]\ar[rd] & (2,-1)\ar[r] & (3,-1) \\
        (1,-2)\ar[r] & (2,-2)\ar[r] & (3,-2) 
    \end{tikzcd}. 
\end{eg}

By using $\wt{Q}^{[-n,0]}$, we can give the following example of a derived-discrete locally finite non-positively graded algebra.
\begin{eg}\label{eg:n-cycle}
    For a graded $n$-cycle $Q$ with a unique negative arrow, the quiver $\wt{Q}^{[-n,0]}$ is a disjoint union of Dynkin quivers of type $A$. By \cref{thm:KY}, we have
    \[
    \#\ind\qty(\pvd^{[-n,0]}kQ)=\#\ind\qty(\grmod_0^{[-n,0]}kQ)=\#\ind\qty(\mod k\wt{Q}^{[-n,0]})<\infty.
    \]
    In particular, $kQ$ is derived-discrete.
\end{eg}

\begin{dfn}
    For a graded ideal $I\subseteq kQ$, we define an ideal $\wt{I}^{[-n,0]}\subseteq k\wt{Q}^{[-n,0]}$ in obvious way. Then we have an isomorphism $\wt{kQ/I}^{[-n,0]}\simeq k\wt{Q}^{[-n,0]}/\wt{I}^{[-n,0]}$.
\end{dfn}

\begin{eg}
    If $Q=$
    \begin{tikzcd}
        1\ar[r,yshift=0.7ex,"\alpha"]\ar[r,yshift=-0.7ex,"\beta"'] & 2\ar[r,"\gamma"] & 3
    \end{tikzcd} with $\deg(\alpha)=0=\deg(\gamma)$ and $\deg(\beta)=-1$, and $I=\langle\gamma\beta\rangle$,
    then $\wt{I}^{[-2,0]}=\langle\gamma_{-1}\beta_0,\gamma_{-2}\beta_{-1}\rangle$.
\end{eg}

\section{Graded gentle one-cycle algebras}\label{sec:4}

    In this section, we discuss when a graded gentle one-cycle algebra is derived-discrete. By Remark \ref{rmk:dd-pre-under-der}, this property is invariant under the derived equivalence. Therefore, we first discuss the classification of gentle one-cycle algebras up to derived equivalence.

    The derived equivalence classifications of gentle one-cycle algebras are known algebraically in the graded case \cite{KY18} and via surface models in the ungraded case \cite{LZ21}. In order to adapt these results to our non-positively graded setting, we formulate the graded classification in terms of surface models.

    We note that in the following three subsections, we discuss general graded gentle one-cycle algebras, which need not be locally finite or non-positively graded.

\subsection{Marked ribbon graphs and marked annuli}

    We first recall the definition of a marked ribbon graph in \cite{OPS18}.

\begin{dfn}\label{dfn:marked-ribbon-graph}
		A \emph{marked ribbon graph} is a tuple $\Gamma=(V,H,s,\iota,\rho,m)$, where
		\begin{enumerate}
			\item $V$ (also denoted by $V(\Gamma)$) is a finite set whose elements are called vertices;
			
			\item $H$ (also denoted by $H(\Gamma)$) is a finite set whose elements are called half-edges;
			
			\item $s\colon H\rightarrow V$ is a function;
			
			\item $\iota\colon H\rightarrow H$ is an involution without fixed points;
			
			\item $\rho\colon H\rightarrow H$ is a permutation whose cycles correspond to the sets $s^{-1}(\vv)$, $\vv\in V$;

            \item A marking is a map $m\colon V'\to H$ defined on a subset $V'\subseteq V$, which assigns to each vertex $\vv\in V'$ a half-edge $m(\vv)$ incident with $\vv$, that is, satisfying $s(m(\vv))=\vv$.
			
		\end{enumerate}
		The $\iota$-orbits are called the edges of $\Gamma$. In particular, if we set
\[
E(\Gamma) := H/\langle \iota \rangle,
\]
then $(V(\Gamma), E(\Gamma))$ is the underlying combinatorial graph of $\Gamma$.
	\end{dfn}

    Indeed, by~\cite{Sch15,OPS18}, every gentle algebra, and hence every gentle one-cycle algebra, can be associated with a marked ribbon graph as follows.

    Let $A=kQ/I$ be a gentle algebra. Since $A$ is monomial and special biserial, there is a unique set $\mathcal{M}$ consisting of all primitive cycles $p$, up to cyclic permutation, such that $p^n\neq 0$ in $A$ for all $n\geq 1$, and all maximal paths in $A$. Here a cycle $p$ is called \emph{primitive} if there is no cycle $q$ and no integer $m\geq 2$ such that $p=q^m$, and a path $p$ is called \emph{maximal} if, for every arrow $\alpha\in Q_1$, one has $\alpha p=0=p\alpha$ in $A$.

    We associate $A$ with a marked ribbon graph $\Gamma_A:=(V,H,s,\iota,\rho,m)$ as follows.

\begin{enumerate}
\item The set of vertices is $V:=\overline{\mathcal{M}}$,
where
$$
\overline{\mathcal{M}} := \mathcal{M} \cup \left\{\, i \in Q_0 \ \middle|\ 
\begin{array}{l}
i \text{ is a source with a single arrow starting at } i, \\
\text{or } i \text{ is a sink with a single arrow ending at } i, \\
\text{or there is a single arrow } \alpha \text{ ending at } i \\
\text{and a single arrow } \beta \text{ starting at } i \text{ with } \beta\alpha \notin I
\end{array}
\right\},
$$
Thus, the vertices of $\Gamma_A$ correspond to the elements of $\mathcal{M}$, together with the above extra trivial paths.

\item The set of half-edges $H=H(\Gamma_A)$ is the set of all occurrences of vertices of $Q$ in the paths $p_{\vv}\in \overline{\mathcal{M}}$.
More explicitly, if
$$
p_{\vv}=i_0\xrightarrow{\alpha_1} i_1\xrightarrow{\alpha_2}\cdots
\xrightarrow{\alpha_\ell} i_\ell,
$$
then $p_{\vv}$ contributes the half-edges
$$
h_{\vv,0},h_{\vv,1},\ldots,h_{\vv,\ell},
$$
where $h_{\vv,j}$ corresponds to the occurrence of the vertex $i_j$ at the $j$-th position of the path $p_{\vv}$. Thus, even if $i_j=i_{j'}$ for $j\neq j'$, the half-edges $h_{\vv,j}$ and $h_{\vv,j'}$ are considered distinct.

If $\vv$ is a trivial path at a vertex $i\in Q_0$, then $p_{\vv}=i$ contributes a unique half-edge, denoted by $h_{\vv,0}$.

\item The map $s\colon H\to V$ is given by
$$
s(h_{\vv,j})=\vv.
$$

\item The involution $\iota\colon H\to H$
is defined as follows. For each $i\in Q_0$, let
$h_{\vv,j},\ h_{\ww,k}$
be the two half-edges corresponding to the two occurrences of $i$ among the paths $p\in\overline{\mathcal{M}}$. Then we set
$$
\iota(h_{\vv,j})=h_{\ww,k},
\qquad
\iota(h_{\ww,k})=h_{\vv,j}.
$$

\item The permutation $\rho\colon H\to H$ is defined by the order of occurrences along each path. More precisely, if
$$
p_{\vv}=i_0\xrightarrow{\alpha_1} i_1\xrightarrow{\alpha_2}\cdots
\xrightarrow{\alpha_\ell} i_\ell,
$$
we set
$$
\rho(h_{\vv,j})=h_{\vv,j+1}
\quad\text{for }0\leq j<\ell,
$$
and
$$
\rho(h_{\vv,\ell})=h_{\vv,0}.
$$
If $\vv$ is a trivial path, then we set $\rho(h_{\vv,0})=h_{\vv,0}.$

\item Finally, we define the marking $m\colon V'\to H$, where $V'\subseteq V$ consists of the vertices which do not correspond to primitive cycles, as follows. If $\vv\in V'$ corresponds to a non-trivial path
$$
p_{\vv}=i_0\xrightarrow{\alpha_1} i_1\xrightarrow{\alpha_2}\cdots
\xrightarrow{\alpha_\ell} i_\ell,
$$
then we set
$$
m(\vv):=h_{\vv,\ell}.
$$
If $\vv\in V'$ corresponds to a trivial path, then we set
$$
m(\vv):=h_{\vv,0}.
$$
\end{enumerate}

    We illustrate the above construction with the following example.

\begin{eg}\label{eg:easy-example}
    Consider the following quiver $Q$:
$$
\begin{tikzcd}
                                             & 2 \arrow[rd, "\beta"] &   \\
1 \arrow[rr, "\gamma"'] \arrow[ru, "\alpha"] &                       & 3
\end{tikzcd}
$$
Let $A=kQ$ and $B=kQ/(\beta\alpha)$. Both $A$ and $B$ are gentle one-cycle algebras. Below are the associated marked ribbon graphs $\Gamma_A$ and $\Gamma_B$, respectively, where the cyclic order around each vertex is given clockwise.

\begin{center}

\tikzset{every picture/.style={line width=0.75pt}} 

\begin{tikzpicture}[x=0.75pt,y=0.75pt,yscale=-1,xscale=1]

\draw  [fill={rgb, 255:red, 0; green, 0; blue, 0 }  ,fill opacity=1 ] (116,85) .. controls (116,82.79) and (117.79,81) .. (120,81) .. controls (122.21,81) and (124,82.79) .. (124,85) .. controls (124,87.21) and (122.21,89) .. (120,89) .. controls (117.79,89) and (116,87.21) .. (116,85) -- cycle ;
\draw  [fill={rgb, 255:red, 0; green, 0; blue, 0 }  ,fill opacity=1 ] (116,135) .. controls (116,132.79) and (117.79,131) .. (120,131) .. controls (122.21,131) and (124,132.79) .. (124,135) .. controls (124,137.21) and (122.21,139) .. (120,139) .. controls (117.79,139) and (116,137.21) .. (116,135) -- cycle ;
\draw  [fill={rgb, 255:red, 0; green, 0; blue, 0 }  ,fill opacity=1 ] (116,210) .. controls (116,207.79) and (117.79,206) .. (120,206) .. controls (122.21,206) and (124,207.79) .. (124,210) .. controls (124,212.21) and (122.21,214) .. (120,214) .. controls (117.79,214) and (116,212.21) .. (116,210) -- cycle ;
\draw [line width=1.5]    (120,135) -- (120,210) ;
\draw [line width=1.5]    (120,85) .. controls (108,61.5) and (67,56.5) .. (55,82.5) .. controls (43,108.5) and (56,179.5) .. (120,210) ;
\draw [line width=1.5]    (120,85) .. controls (137,54.5) and (175,63.5) .. (183,88.5) .. controls (191,113.5) and (184,171.5) .. (120,210) ;
\draw  [fill={rgb, 255:red, 0; green, 0; blue, 0 }  ,fill opacity=1 ] (416,85) .. controls (416,82.79) and (417.79,81) .. (420,81) .. controls (422.21,81) and (424,82.79) .. (424,85) .. controls (424,87.21) and (422.21,89) .. (420,89) .. controls (417.79,89) and (416,87.21) .. (416,85) -- cycle ;
\draw  [fill={rgb, 255:red, 0; green, 0; blue, 0 }  ,fill opacity=1 ] (416,135) .. controls (416,132.79) and (417.79,131) .. (420,131) .. controls (422.21,131) and (424,132.79) .. (424,135) .. controls (424,137.21) and (422.21,139) .. (420,139) .. controls (417.79,139) and (416,137.21) .. (416,135) -- cycle ;
\draw  [fill={rgb, 255:red, 0; green, 0; blue, 0 }  ,fill opacity=1 ] (416,210) .. controls (416,207.79) and (417.79,206) .. (420,206) .. controls (422.21,206) and (424,207.79) .. (424,210) .. controls (424,212.21) and (422.21,214) .. (420,214) .. controls (417.79,214) and (416,212.21) .. (416,210) -- cycle ;
\draw [line width=1.5]    (420,135) -- (420,210) ;
\draw [line width=1.5]    (420,85) .. controls (408,61.5) and (367,56.5) .. (355,82.5) .. controls (343,108.5) and (356,179.5) .. (420,210) ;
\draw [line width=1.5]    (420,85) .. controls (441,53.5) and (491,82.5) .. (491,108.5) .. controls (491,134.5) and (438,166.5) .. (420,135) ;
\draw  [draw opacity=0] (107.63,69.29) .. controls (111.03,66.6) and (115.33,65) .. (120,65) .. controls (124.38,65) and (128.42,66.41) .. (131.72,68.79) -- (120,85) -- cycle ; \draw   (107.63,69.29) .. controls (111.03,66.6) and (115.33,65) .. (120,65) .. controls (124.38,65) and (128.42,66.41) .. (131.72,68.79) ;  
\draw   (128.15,64.1) -- (131.28,68.73) -- (125.7,68.45) ;
\draw  [draw opacity=0] (104.79,197.01) .. controls (107.84,193.45) and (112.11,190.97) .. (116.96,190.23) -- (120,210) -- cycle ; \draw   (104.79,197.01) .. controls (107.84,193.45) and (112.11,190.97) .. (116.96,190.23) ;  
\draw   (111.37,188.72) -- (116.75,190.23) -- (112.32,193.63) ;
\draw  [draw opacity=0] (122.21,190.12) .. controls (127.48,190.7) and (132.14,193.33) .. (135.36,197.19) -- (120,210) -- cycle ; \draw   (122.21,190.12) .. controls (127.48,190.7) and (132.14,193.33) .. (135.36,197.19) ;  
\draw   (133.59,191.96) -- (135.22,197.31) -- (129.97,195.41) ;
\draw  [draw opacity=0] (407.63,69.29) .. controls (411.03,66.6) and (415.33,65) .. (420,65) .. controls (425.3,65) and (430.12,67.06) .. (433.7,70.43) -- (420,85) -- cycle ; \draw   (407.63,69.29) .. controls (411.03,66.6) and (415.33,65) .. (420,65) .. controls (425.3,65) and (430.12,67.06) .. (433.7,70.43) ;  
\draw   (431.37,65.17) -- (433.33,70.4) -- (427.97,68.83) ;
\draw  [draw opacity=0] (434.84,148.41) .. controls (431.69,151.89) and (427.34,154.26) .. (422.43,154.85) -- (420,135) -- cycle ; \draw   (434.84,148.41) .. controls (431.69,151.89) and (427.34,154.26) .. (422.43,154.85) ;  
\draw  [draw opacity=0] (404.19,197.76) .. controls (407.35,193.67) and (412.07,190.85) .. (417.45,190.16) -- (420,210) -- cycle ; \draw   (404.19,197.76) .. controls (407.35,193.67) and (412.07,190.85) .. (417.45,190.16) ;  
\draw   (411.67,188.88) -- (417.13,190.09) -- (412.88,193.73) ;
\draw   (427.49,156.23) -- (421.94,155.53) -- (425.83,151.51) ;

\draw (116,48) node [anchor=north west][inner sep=0.75pt]   [align=left] {$\gamma$};
\draw (100,178) node [anchor=north west][inner sep=0.75pt]   [align=left] {$\alpha$};
\draw (127,176) node [anchor=north west][inner sep=0.75pt]   [align=left] {$\beta$};
\draw (115,93) node [anchor=north west][inner sep=0.75pt]   [align=left] {$\vv_1$};
\draw (115,117) node [anchor=north west][inner sep=0.75pt]   [align=left] {$\vv_2$};
\draw (115,217) node [anchor=north west][inner sep=0.75pt]   [align=left] {$\vv_3$};
\draw (415,48) node [anchor=north west][inner sep=0.75pt]   [align=left] {$\alpha$};
\draw (427,156) node [anchor=north west][inner sep=0.75pt]   [align=left] {$\beta$};
\draw (398,176) node [anchor=north west][inner sep=0.75pt]   [align=left] {$\gamma$};
\draw (415,93) node [anchor=north west][inner sep=0.75pt]   [align=left] {$\ww_1$};
\draw (415,117) node [anchor=north west][inner sep=0.75pt]   [align=left] {$\ww_2$};
\draw (415,217) node [anchor=north west][inner sep=0.75pt]   [align=left] {$\ww_3$};
\draw (45,137) node [anchor=north west][inner sep=0.75pt]   [align=left] {$1$};
\draw (108,151) node [anchor=north west][inner sep=0.75pt]   [align=left] {$2$};
\draw (182,137) node [anchor=north west][inner sep=0.75pt]   [align=left] {$3$};
\draw (345,137) node [anchor=north west][inner sep=0.75pt]   [align=left] {$1$};
\draw (494,100) node [anchor=north west][inner sep=0.75pt]   [align=left] {$2$};
\draw (408,154) node [anchor=north west][inner sep=0.75pt]   [align=left] {$3$};
\end{tikzpicture}
\end{center}
Here $\vv_1=\gamma$, $\vv_2=e_2$, $\vv_3=\beta\alpha$, and $\ww_1=\alpha$, $\ww_2=\beta$, $\ww_3=\gamma$. Moreover, the marking at each vertex corresponds to the incident half-edge which is not the source of an arrow.
\end{eg}

As shown in~\cite{ABCP10,OPS18}, every marked ribbon graph determines an oriented surface with non-empty boundary, called its marked surface. Since we only consider gentle one-cycle algebras, the corresponding surface is in fact an annulus, that is, a closed disk with an open disk removed. We make the following definition.

\begin{dfn}\label{dfn:marked-annulus}
  A \emph{marked annulus} is a pair $(\mathcal{S},M)$, where $\mathcal{S}$ is an annulus whose boundary decomposes as $\partial\mathcal{S}=B_1\sqcup B_2$, and $M$ is a non-empty finite set whose elements are either points on $\partial\mathcal{S}$ or boundary components of $\mathcal{S}$. We require that at most one boundary component belongs to $M$, and that no point of a boundary component $B_i$ is included in $M$ when $B_i\in M$. Elements of $M$ which are points are called \emph{marked points}. If $B_i\in M$, then we call $B_i$ a \emph{puncture}.

This terminology reflects the fact that such a boundary component plays the role of an interior puncture when the annulus is viewed as a disk with one hole collapsed to a point.
\end{dfn}

In fact, the marked ribbon graph of a gentle one-cycle algebra can be embedded into the corresponding marked annulus.

\begin{prop}\cite[Proposition 1.6]{OPS18}\label{prop:embed}
    Let $A$ be a gentle one-cycle algebra, and let $\Gamma_A$ be its associated marked ribbon graph. Then there exists a unique marked annulus $(\mathcal{S}_A,M_A)$ satisfying the following conditions.
\begin{enumerate}
\item The elements in $M_A$ are in bijection with the vertices of $\Gamma_A$.

\item There exists an orientation-preserving embedding
$$
\iota_A:\Gamma_A\longrightarrow \mathcal{S}_A
$$
such that each vertex of $\Gamma_A$ corresponding to a primitive cycle is sent to the puncture in $M_A$, each remaining vertex is sent to the corresponding marked point in $M_A$, and the interior of each edge of $\Gamma_A$ is mapped into the interior of $\mathcal{S}_A$.

\item For each vertex $\vv$ of $\Gamma_A$, the boundary component containing $\iota_A(\vv)$ lies between the half-edges $m(\vv)$ and $\rho(m(\vv))$ in the clockwise orientation.

\item The embedding $\iota_A$ is unique up to homotopy relative to $\partial\mathcal{S}_A$.
\end{enumerate}
\end{prop}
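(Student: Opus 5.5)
The plan is to follow the standard construction of \cite{OPS18} (going back to \cite{ABCP10}): thicken $\Gamma_A$ to a surface and then check that it is an annulus with the required marked set. First build a ribbon surface from $\Gamma_A$. Take a small closed disk around each vertex $\vv$ and a thin band along each edge $\{h,\iota(h)\}$. The bands are glued to the vertex disks in the cyclic order prescribed by $\rho$ around $\vv$, with the orientation convention that $\rho$ is read clockwise. The resulting compact oriented surface $\mathcal{S}'$ retracts onto $\Gamma_A$, so $\chi(\mathcal{S}')=\#V-\#E$.

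Next count the data for a gentle one-cycle algebra. Since every vertex of $Q$ occurs exactly twice among the paths in $\overline{\mathcal{M}}$, we have $\#E=\#Q_0$. For a gentle algebra $\#\overline{\mathcal{M}}$ equals $\#Q_0$ minus $\chi$ of the underlying graph of $Q$, up to the standard counting of \cite{OPS18}, which gives $\chi(\mathcal{S}')=\chi(Q)=0$ in the one-cycle case. Then count the boundary components of $\mathcal{S}'$ by tracing the faces of the ribbon graph, that is, the orbits of $\rho\circ\iota$. The orbits correspond to "anti-paths" (successive relations) together with the marked segments. Here I would use the unique cycle: an orbit can close up without passing through a marked corner only if it runs around the cycle, and the oriented cycle with no relations is the only primitive cycle. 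This yields exactly two boundary components, and since $\chi=0$, $\mathcal{S}'$ is an annulus.

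Now define $M_A$. For each marked vertex $\vv\in V'$, place a marked point on the boundary arc of the vertex disk lying between $m(\vv)$ and $\rho(m(\vv))$; this is condition (3). A vertex corresponding to a primitive cycle has no marking. Its disk boundary is not adjacent to any boundary arc, because every corner of it is a nonzero composition around the cycle. So I collapse that whole region to the boundary component it determines and declare that component a puncture. Since there is only one cycle, there is at most one such vertex, so at most one puncture, and it carries no marked points, as \cref{dfn:marked-annulus} requires. Condition (1) holds by construction, and the inclusion of $\Gamma_A$ gives the orientation-preserving embedding of (2).

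For uniqueness, (4) follows because the surface is determined up to homeomorphism by the combinatorial ribbon structure (a regular neighbourhood of the embedded graph is unique up to isotopy). Any embedding satisfying (2)–(3) is therefore homotopic rel $\partial\mathcal{S}_A$ to the constructed one. The main obstacle is the boundary-component count together with the puncture analysis, i.e. verifying that the faces are exactly two and that the primitive-cycle vertex collapses consistently. In practice I would cite \cite[Proposition 1.6]{OPS18} for the general gentle case and only verify the genus-zero, two-boundary specialization via the Euler characteristic computation above.
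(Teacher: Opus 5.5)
The paper gives no proof of this statement; it is quoted from the cited source. Your thickening construction is the standard one behind it, and for finite-dimensional $A$ your sketch essentially works: there $\chi(\mathcal{S}')=0$, and an orientable surface with $\chi=0$ and nonempty boundary is an annulus, so no orbit counting is needed.

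The genuine gap is the case where $A$ has a primitive cycle, i.e.\ $A$ is infinite-dimensional. This is exactly the case that produces the puncture, and your argument fails there in three places.

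First, your vertex count is wrong. A relation-free oriented cycle with $\ell$ arrows contributes only $\ell$ half-edges, whereas a path with $\ell$ arrows contributes $\ell+1$. Hence $\#V=2\#Q_0-\#Q_1+c=\#Q_0+\chi(Q)+c$, where $c\in\{0,1\}$ is the number of such cycles, and so $\chi(\mathcal{S}')=\chi(Q)+c$. For infinite-dimensional $A$ this equals $1$: $\Gamma_A$ is a tree and $\mathcal{S}'$ is a disc with a single boundary circle. For $A=k[x]$, $\Gamma_A$ is one edge joining the cycle vertex to the vertex $e_1$.

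Second, your description of the faces is reversed. Along a boundary circle of $\mathcal{S}'$ (an orbit of $\iota\rho$ on corners), two consecutive arrow corners $\alpha,\beta$ always satisfy $\beta\alpha\in I$. So a circle with no marked corner comes from an oriented cycle all of whose length-two compositions lie in $I$, not from the relation-free cycle. That circle is the unmarked, non-puncture boundary component of the finite-dimensional, infinite-global-dimension case (right of Figure~\ref{fig:representative-algebras}).

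Third, your puncture step fails. The corners of the primitive-cycle vertex are ordinary boundary arcs of $\mathcal{S}'$, lying on its unique boundary circle together with all the marked points. Collapsing to ``the boundary component it determines'' and declaring it a puncture would therefore give a puncture carrying marked points, and would still leave a disc. The correct step is to keep the unmarked vertex in the interior and remove a small open disc around it. The new boundary circle is the puncture, $\chi$ drops to $0$, the number of boundary components becomes $2$, and (2) holds with the puncture collapsed to that vertex, as in Figure~\ref{fig:representative-algebras-infinite}.

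Your argument for (4) is also incomplete. Uniqueness of regular neighbourhoods only shows that the thickening is well defined; it does not show that an arbitrary embedding satisfying (2)--(3) is homotopic to yours. For that you need an extra normalization: every complementary region of $\iota_A(\Gamma_A)$ must be one of the two types in Figure~\ref{fig:2-type-components} (with the puncture viewed as a point), i.e.\ $\iota_A(\Gamma_A)$ must be a spine of the surface. Conditions (2)--(3) alone do not force this. For example, take $a\colon 1\to 2$, $b\colon 2\to 1$ and $I=\langle ab,ba\rangle$. Then $\Gamma_A$ is a $2$-cycle, and it can be drawn with both vertices on one boundary circle so that it bounds a disc containing both arrow corners. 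This drawing satisfies (1)--(3), but it is not homotopic to the standard embedding, in which the cycle goes around the unmarked hole.
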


\begin{figure}
\begin{center}
\begin{tikzpicture}[x=0.75pt,y=0.75pt,yscale=-1,xscale=1]

\draw   (20,110) .. controls (20,54.77) and (64.77,10) .. (120,10) .. controls (175.23,10) and (220,54.77) .. (220,110) .. controls (220,165.23) and (175.23,210) .. (120,210) .. controls (64.77,210) and (20,165.23) .. (20,110) -- cycle ;
\draw  [fill={rgb, 255:red, 230; green, 230; blue, 230 }  ,fill opacity=1 ] (95,110) .. controls (95,96.19) and (106.19,85) .. (120,85) .. controls (133.81,85) and (145,96.19) .. (145,110) .. controls (145,123.81) and (133.81,135) .. (120,135) .. controls (106.19,135) and (95,123.81) .. (95,110) -- cycle ;
\draw  [fill={rgb, 255:red, 0; green, 0; blue, 0 }  ,fill opacity=1 ] (116,85) .. controls (116,82.79) and (117.79,81) .. (120,81) .. controls (122.21,81) and (124,82.79) .. (124,85) .. controls (124,87.21) and (122.21,89) .. (120,89) .. controls (117.79,89) and (116,87.21) .. (116,85) -- cycle ;
\draw  [fill={rgb, 255:red, 0; green, 0; blue, 0 }  ,fill opacity=1 ] (116,135) .. controls (116,132.79) and (117.79,131) .. (120,131) .. controls (122.21,131) and (124,132.79) .. (124,135) .. controls (124,137.21) and (122.21,139) .. (120,139) .. controls (117.79,139) and (116,137.21) .. (116,135) -- cycle ;
\draw  [fill={rgb, 255:red, 0; green, 0; blue, 0 }  ,fill opacity=1 ] (116,210) .. controls (116,207.79) and (117.79,206) .. (120,206) .. controls (122.21,206) and (124,207.79) .. (124,210) .. controls (124,212.21) and (122.21,214) .. (120,214) .. controls (117.79,214) and (116,212.21) .. (116,210) -- cycle ;
\draw [line width=1.5]    (120,135) -- (120,210) ;
\draw [line width=1.5]    (120,85) .. controls (108,61.5) and (67,56.5) .. (55,82.5) .. controls (43,108.5) and (56,179.5) .. (120,210) ;
\draw [line width=1.5]    (120,85) .. controls (137,54.5) and (175,63.5) .. (183,88.5) .. controls (191,113.5) and (184,171.5) .. (120,210) ;
\draw   (320,110) .. controls (320,54.77) and (364.77,10) .. (420,10) .. controls (475.23,10) and (520,54.77) .. (520,110) .. controls (520,165.23) and (475.23,210) .. (420,210) .. controls (364.77,210) and (320,165.23) .. (320,110) -- cycle ;
\draw  [fill={rgb, 255:red, 230; green, 230; blue, 230 }  ,fill opacity=1 ] (395,110) .. controls (395,96.19) and (406.19,85) .. (420,85) .. controls (433.81,85) and (445,96.19) .. (445,110) .. controls (445,123.81) and (433.81,135) .. (420,135) .. controls (406.19,135) and (395,123.81) .. (395,110) -- cycle ;
\draw  [fill={rgb, 255:red, 0; green, 0; blue, 0 }  ,fill opacity=1 ] (416,85) .. controls (416,82.79) and (417.79,81) .. (420,81) .. controls (422.21,81) and (424,82.79) .. (424,85) .. controls (424,87.21) and (422.21,89) .. (420,89) .. controls (417.79,89) and (416,87.21) .. (416,85) -- cycle ;
\draw  [fill={rgb, 255:red, 0; green, 0; blue, 0 }  ,fill opacity=1 ] (416,135) .. controls (416,132.79) and (417.79,131) .. (420,131) .. controls (422.21,131) and (424,132.79) .. (424,135) .. controls (424,137.21) and (422.21,139) .. (420,139) .. controls (417.79,139) and (416,137.21) .. (416,135) -- cycle ;
\draw  [fill={rgb, 255:red, 0; green, 0; blue, 0 }  ,fill opacity=1 ] (416,210) .. controls (416,207.79) and (417.79,206) .. (420,206) .. controls (422.21,206) and (424,207.79) .. (424,210) .. controls (424,212.21) and (422.21,214) .. (420,214) .. controls (417.79,214) and (416,212.21) .. (416,210) -- cycle ;
\draw [line width=1.5]    (420,135) -- (420,210) ;
\draw [line width=1.5]    (420,85) .. controls (408,61.5) and (367,56.5) .. (355,82.5) .. controls (343,108.5) and (356,179.5) .. (420,210) ;
\draw [line width=1.5]    (420,85) .. controls (441,53.5) and (491,82.5) .. (491,108.5) .. controls (491,134.5) and (438,166.5) .. (420,135) ;
\draw  [draw opacity=0] (107.63,69.29) .. controls (111.03,66.6) and (115.33,65) .. (120,65) .. controls (124.38,65) and (128.42,66.41) .. (131.72,68.79) -- (120,85) -- cycle ; \draw   (107.63,69.29) .. controls (111.03,66.6) and (115.33,65) .. (120,65) .. controls (124.38,65) and (128.42,66.41) .. (131.72,68.79) ;  
\draw   (128.15,64.1) -- (131.28,68.73) -- (125.7,68.45) ;
\draw  [draw opacity=0] (104.79,197.01) .. controls (107.84,193.45) and (112.11,190.97) .. (116.96,190.23) -- (120,210) -- cycle ; \draw   (104.79,197.01) .. controls (107.84,193.45) and (112.11,190.97) .. (116.96,190.23) ;  
\draw   (111.37,188.72) -- (116.75,190.23) -- (112.32,193.63) ;
\draw  [draw opacity=0] (122.21,190.12) .. controls (127.48,190.7) and (132.14,193.33) .. (135.36,197.19) -- (120,210) -- cycle ; \draw   (122.21,190.12) .. controls (127.48,190.7) and (132.14,193.33) .. (135.36,197.19) ;  
\draw   (133.59,191.96) -- (135.22,197.31) -- (129.97,195.41) ;
\draw  [draw opacity=0] (407.63,69.29) .. controls (411.03,66.6) and (415.33,65) .. (420,65) .. controls (425.3,65) and (430.12,67.06) .. (433.7,70.43) -- (420,85) -- cycle ; \draw   (407.63,69.29) .. controls (411.03,66.6) and (415.33,65) .. (420,65) .. controls (425.3,65) and (430.12,67.06) .. (433.7,70.43) ;  
\draw   (431.37,65.17) -- (433.33,70.4) -- (427.97,68.83) ;
\draw  [draw opacity=0] (434.84,148.41) .. controls (431.69,151.89) and (427.34,154.26) .. (422.43,154.85) -- (420,135) -- cycle ; \draw   (434.84,148.41) .. controls (431.69,151.89) and (427.34,154.26) .. (422.43,154.85) ;  
\draw  [draw opacity=0] (404.19,197.76) .. controls (407.35,193.67) and (412.07,190.85) .. (417.45,190.16) -- (420,210) -- cycle ; \draw   (404.19,197.76) .. controls (407.35,193.67) and (412.07,190.85) .. (417.45,190.16) ;  
\draw   (411.67,188.88) -- (417.13,190.09) -- (412.88,193.73) ;
\draw   (427.49,156.23) -- (421.94,155.53) -- (425.83,151.51) ;

\draw (116,48) node [anchor=north west][inner sep=0.75pt]   [align=left] {$\gamma$};
\draw (100,178) node [anchor=north west][inner sep=0.75pt]   [align=left] {$\alpha$};
\draw (127,176) node [anchor=north west][inner sep=0.75pt]   [align=left] {$\beta$};
\draw (415,48) node [anchor=north west][inner sep=0.75pt]   [align=left] {$\alpha$};
\draw (427,156) node [anchor=north west][inner sep=0.75pt]   [align=left] {$\beta$};
\draw (398,176) node [anchor=north west][inner sep=0.75pt]   [align=left] {$\gamma$};
\draw (3,110) node [anchor=north west][inner sep=0.75pt]   [align=left] {$S_1$};
\draw (80,110) node [anchor=north west][inner sep=0.75pt]   [align=left] {$S_2$};
\draw (145,110) node [anchor=north west][inner sep=0.75pt]   [align=left] {$S_3$};
\draw (300,110) node [anchor=north west][inner sep=0.75pt]   [align=left] {$S_1'$};
\draw (445,110) node [anchor=north west][inner sep=0.75pt]   [align=left] {$S_3'$};
\draw (378,110) node [anchor=north west][inner sep=0.75pt]   [align=left] {$S_2'$};
\end{tikzpicture}
\end{center}  
				\caption{The marked annuli associated with $\Gamma_A$ and $\Gamma_B$ in example \ref{eg:easy-example}, respectively.}
		\label{fig:surface-of-easy-example}	
	\end{figure}	

    \subsection{Gradings and winding numbers}

    To study graded gentle algebras, we need to equip the associated marked surface with a grading defined by a line field. By \cite[Theorem 1.8]{LP20}, for an annulus, or more generally for a genus-zero surface, this grading is determined by the winding numbers of the boundary components. We therefore make the following definition.

\begin{dfn}\label{dfn:graded-marked-annulus}
A \emph{graded marked annulus} is a triple $(\mathcal{S},M,\eta)$, where $(\mathcal{S},M)$ is a marked annulus with $\partial\mathcal{S}=B_1\sqcup B_2$, and
$$
\eta\colon\{B_1,B_2\}\longrightarrow \mathbb{Z}
$$
is a map satisfying $\eta(B_1)=-\eta(B_2)$. Such a map $\eta$ is called a \emph{grading} on $(\mathcal{S},M)$.
\end{dfn}

    In fact, by~\cite{HKK17,OPS18}, every graded gentle one-cycle algebra gives rise to a graded marked annulus. We recall the construction below, following the combinatorial approach in~\cite[Section 3.3]{OZ22} and \cite[Section~3.2]{JSW23}.

    To define the winding number of each boundary component, we first assign winding numbers to the boundary segments between consecutive marked points. Indeed, under the embedding given in Proposition~\ref{prop:embed}, the ribbon graph $\Gamma_A$ decomposes the annulus $\mathcal{S}_A$ into connected components of two types, as shown in Figure~\ref{fig:2-type-components}.

    \begin{figure}
        \begin{center}
\begin{tikzpicture}[x=0.75pt,y=0.75pt,yscale=-1,xscale=1]

\draw  [fill={rgb, 255:red, 0; green, 0; blue, 0 }  ,fill opacity=1 ] (36,116.6) .. controls (36,114.39) and (37.79,112.6) .. (40,112.6) .. controls (42.21,112.6) and (44,114.39) .. (44,116.6) .. controls (44,118.81) and (42.21,120.6) .. (40,120.6) .. controls (37.79,120.6) and (36,118.81) .. (36,116.6) -- cycle ;
\draw  [fill={rgb, 255:red, 0; green, 0; blue, 0 }  ,fill opacity=1 ] (86,30) .. controls (86,27.79) and (87.79,26) .. (90,26) .. controls (92.21,26) and (94,27.79) .. (94,30) .. controls (94,32.21) and (92.21,34) .. (90,34) .. controls (87.79,34) and (86,32.21) .. (86,30) -- cycle ;
\draw  [fill={rgb, 255:red, 0; green, 0; blue, 0 }  ,fill opacity=1 ] (236,116.6) .. controls (236,114.39) and (237.79,112.6) .. (240,112.6) .. controls (242.21,112.6) and (244,114.39) .. (244,116.6) .. controls (244,118.81) and (242.21,120.6) .. (240,120.6) .. controls (237.79,120.6) and (236,118.81) .. (236,116.6) -- cycle ;
\draw  [fill={rgb, 255:red, 0; green, 0; blue, 0 }  ,fill opacity=1 ] (186,30) .. controls (186,27.79) and (187.79,26) .. (190,26) .. controls (192.21,26) and (194,27.79) .. (194,30) .. controls (194,32.21) and (192.21,34) .. (190,34) .. controls (187.79,34) and (186,32.21) .. (186,30) -- cycle ;
\draw [line width=1.5]    (40,116.6) -- (90,203.21) ;
\draw [line width=1.5]    (90,30) -- (40,116.6) ;
\draw [line width=1.5]    (190,203.21) -- (240,116.6) ;
\draw  [draw opacity=0][fill={rgb, 255:red, 230; green, 230; blue, 230 }  ,fill opacity=1 ] (90,203.21) -- (190,203.21) -- (190,213.71) -- (90,213.71) -- cycle ;
\draw  [fill={rgb, 255:red, 0; green, 0; blue, 0 }  ,fill opacity=1 ] (86,203.21) .. controls (86,201) and (87.79,199.21) .. (90,199.21) .. controls (92.21,199.21) and (94,201) .. (94,203.21) .. controls (94,205.41) and (92.21,207.21) .. (90,207.21) .. controls (87.79,207.21) and (86,205.41) .. (86,203.21) -- cycle ;
\draw  [fill={rgb, 255:red, 0; green, 0; blue, 0 }  ,fill opacity=1 ] (186,203.21) .. controls (186,201) and (187.79,199.21) .. (190,199.21) .. controls (192.21,199.21) and (194,201) .. (194,203.21) .. controls (194,205.41) and (192.21,207.21) .. (190,207.21) .. controls (187.79,207.21) and (186,205.41) .. (186,203.21) -- cycle ;
\draw [color={rgb, 255:red, 0; green, 0; blue, 0 }  ,draw opacity=1 ][fill={rgb, 255:red, 3; green, 2; blue, 2 }  ,fill opacity=1 ]   (90,203.21) -- (190,203.21) ;
\draw [line width=1.5]  [dash pattern={on 1.69pt off 2.76pt}]  (90,30) -- (190,30) ;
\draw [line width=1.5]    (190,30) -- (240,116.6) ;
\draw  [draw opacity=0] (52.41,100.92) .. controls (57.03,104.58) and (60,110.25) .. (60,116.6) .. controls (60,122.75) and (57.23,128.25) .. (52.86,131.92) -- (40,116.6) -- cycle ; \draw   (52.41,100.92) .. controls (57.03,104.58) and (60,110.25) .. (60,116.6) .. controls (60,122.75) and (57.23,128.25) .. (52.86,131.92) ;  
\draw   (57.23,130.98) -- (51.9,132.63) -- (53.77,127.37) ;
\draw  [draw opacity=0] (228.12,132.69) .. controls (223.19,129.05) and (220,123.2) .. (220,116.6) .. controls (220,110.09) and (223.12,104.3) .. (227.94,100.65) -- (240,116.6) -- cycle ; \draw   (228.12,132.69) .. controls (223.19,129.05) and (220,123.2) .. (220,116.6) .. controls (220,110.09) and (223.12,104.3) .. (227.94,100.65) ;  
\draw   (222.98,102.4) -- (228.11,100.19) -- (226.79,105.63) ;
\draw  [draw opacity=0] (109.74,33.24) .. controls (108.19,42.75) and (99.94,50) .. (90,50) .. controls (87.71,50) and (85.51,49.62) .. (83.47,48.91) -- (90,30) -- cycle ; \draw   (109.74,33.24) .. controls (108.19,42.75) and (99.94,50) .. (90,50) .. controls (87.71,50) and (85.51,49.62) .. (83.47,48.91) ;  
\draw   (87.85,52.78) -- (83.69,49.04) -- (89.17,47.96) ;
\draw  [draw opacity=0] (197.29,48.63) .. controls (195.03,49.51) and (192.57,50) .. (190,50) .. controls (180.27,50) and (172.17,43.05) .. (170.37,33.85) -- (190,30) -- cycle ; \draw   (197.29,48.63) .. controls (195.03,49.51) and (192.57,50) .. (190,50) .. controls (180.27,50) and (172.17,43.05) .. (170.37,33.85) ;  
\draw   (169.55,39.37) -- (169.94,33.79) -- (174.17,37.45) ;
\draw  [fill={rgb, 255:red, 0; green, 0; blue, 0 }  ,fill opacity=1 ] (336,116.6) .. controls (336,114.39) and (337.79,112.6) .. (340,112.6) .. controls (342.21,112.6) and (344,114.39) .. (344,116.6) .. controls (344,118.81) and (342.21,120.6) .. (340,120.6) .. controls (337.79,120.6) and (336,118.81) .. (336,116.6) -- cycle ;
\draw  [fill={rgb, 255:red, 0; green, 0; blue, 0 }  ,fill opacity=1 ] (386,30) .. controls (386,27.79) and (387.79,26) .. (390,26) .. controls (392.21,26) and (394,27.79) .. (394,30) .. controls (394,32.21) and (392.21,34) .. (390,34) .. controls (387.79,34) and (386,32.21) .. (386,30) -- cycle ;
\draw  [fill={rgb, 255:red, 0; green, 0; blue, 0 }  ,fill opacity=1 ] (536,116.6) .. controls (536,114.39) and (537.79,112.6) .. (540,112.6) .. controls (542.21,112.6) and (544,114.39) .. (544,116.6) .. controls (544,118.81) and (542.21,120.6) .. (540,120.6) .. controls (537.79,120.6) and (536,118.81) .. (536,116.6) -- cycle ;
\draw  [fill={rgb, 255:red, 0; green, 0; blue, 0 }  ,fill opacity=1 ] (486,30) .. controls (486,27.79) and (487.79,26) .. (490,26) .. controls (492.21,26) and (494,27.79) .. (494,30) .. controls (494,32.21) and (492.21,34) .. (490,34) .. controls (487.79,34) and (486,32.21) .. (486,30) -- cycle ;
\draw [line width=1.5]    (340,116.6) -- (390,203.21) ;
\draw [line width=1.5]    (390,30) -- (340,116.6) ;
\draw [line width=1.5]    (490,203.21) -- (540,116.6) ;
\draw  [fill={rgb, 255:red, 0; green, 0; blue, 0 }  ,fill opacity=1 ] (386,203.21) .. controls (386,201) and (387.79,199.21) .. (390,199.21) .. controls (392.21,199.21) and (394,201) .. (394,203.21) .. controls (394,205.41) and (392.21,207.21) .. (390,207.21) .. controls (387.79,207.21) and (386,205.41) .. (386,203.21) -- cycle ;
\draw  [fill={rgb, 255:red, 0; green, 0; blue, 0 }  ,fill opacity=1 ] (486,203.21) .. controls (486,201) and (487.79,199.21) .. (490,199.21) .. controls (492.21,199.21) and (494,201) .. (494,203.21) .. controls (494,205.41) and (492.21,207.21) .. (490,207.21) .. controls (487.79,207.21) and (486,205.41) .. (486,203.21) -- cycle ;
\draw [color={rgb, 255:red, 0; green, 0; blue, 0 }  ,draw opacity=1 ][fill={rgb, 255:red, 3; green, 2; blue, 2 }  ,fill opacity=1 ][line width=1.5]    (390,203.21) -- (490,203.21) ;
\draw [line width=1.5]  [dash pattern={on 1.69pt off 2.76pt}]  (390,30) -- (490,30) ;
\draw [line width=1.5]    (490,30) -- (540,116.6) ;
\draw  [draw opacity=0] (352.41,100.92) .. controls (357.03,104.58) and (360,110.25) .. (360,116.6) .. controls (360,122.75) and (357.23,128.25) .. (352.86,131.92) -- (340,116.6) -- cycle ; \draw   (352.41,100.92) .. controls (357.03,104.58) and (360,110.25) .. (360,116.6) .. controls (360,122.75) and (357.23,128.25) .. (352.86,131.92) ;  
\draw   (357.23,130.98) -- (351.9,132.63) -- (353.77,127.37) ;
\draw  [draw opacity=0] (528.12,132.69) .. controls (523.19,129.05) and (520,123.2) .. (520,116.6) .. controls (520,110.09) and (523.12,104.3) .. (527.94,100.65) -- (540,116.6) -- cycle ; \draw   (528.12,132.69) .. controls (523.19,129.05) and (520,123.2) .. (520,116.6) .. controls (520,110.09) and (523.12,104.3) .. (527.94,100.65) ;  
\draw   (522.98,102.4) -- (528.11,100.19) -- (526.79,105.63) ;
\draw  [draw opacity=0] (409.74,33.24) .. controls (408.19,42.75) and (399.94,50) .. (390,50) .. controls (387.71,50) and (385.51,49.62) .. (383.47,48.91) -- (390,30) -- cycle ; \draw   (409.74,33.24) .. controls (408.19,42.75) and (399.94,50) .. (390,50) .. controls (387.71,50) and (385.51,49.62) .. (383.47,48.91) ;  
\draw   (387.85,52.78) -- (383.69,49.04) -- (389.17,47.96) ;
\draw  [draw opacity=0] (497.29,48.63) .. controls (495.03,49.51) and (492.57,50) .. (490,50) .. controls (480.27,50) and (472.17,43.05) .. (470.37,33.85) -- (490,30) -- cycle ; \draw   (497.29,48.63) .. controls (495.03,49.51) and (492.57,50) .. (490,50) .. controls (480.27,50) and (472.17,43.05) .. (470.37,33.85) ;  
\draw   (469.55,39.37) -- (469.94,33.79) -- (474.17,37.45) ;
\draw  [fill={rgb, 255:red, 230; green, 230; blue, 230 }  ,fill opacity=1 ] (415,116.6) .. controls (415,102.8) and (426.19,91.6) .. (440,91.6) .. controls (453.81,91.6) and (465,102.8) .. (465,116.6) .. controls (465,130.41) and (453.81,141.6) .. (440,141.6) .. controls (426.19,141.6) and (415,130.41) .. (415,116.6) -- cycle ;
\draw  [draw opacity=0] (383.46,184.3) .. controls (385.51,183.59) and (387.71,183.21) .. (390,183.21) .. controls (400.03,183.21) and (408.33,190.59) .. (409.78,200.21) -- (390,203.21) -- cycle ; \draw   (383.46,184.3) .. controls (385.51,183.59) and (387.71,183.21) .. (390,183.21) .. controls (400.03,183.21) and (408.33,190.59) .. (409.78,200.21) ;  
\draw  [draw opacity=0] (470.18,200.54) .. controls (471.48,190.75) and (479.86,183.21) .. (490,183.21) .. controls (493.01,183.21) and (495.86,183.87) .. (498.42,185.06) -- (490,203.21) -- cycle ; \draw   (470.18,200.54) .. controls (471.48,190.75) and (479.86,183.21) .. (490,183.21) .. controls (493.01,183.21) and (495.86,183.87) .. (498.42,185.06) ;  
\draw   (411.25,194.59) -- (409.98,200.03) -- (406.39,195.75) ;
\draw   (494.32,180.94) -- (498.27,184.89) -- (492.74,185.69) ;

\draw (60,109) node [anchor=north west][inner sep=0.75pt]   [align=left] {$\alpha_1$};
\draw (200,109) node [anchor=north west][inner sep=0.75pt]   [align=left] {$\alpha_n$};
\draw (134,185) node [anchor=north west][inner sep=0.75pt]   [align=left] {$S$};
\draw (360,109) node [anchor=north west][inner sep=0.75pt]   [align=left] {$\alpha_2$};
\draw (492,109) node [anchor=north west][inner sep=0.75pt]   [align=left] {$\alpha_{n-1}$};
\draw (393,170) node [anchor=north west][inner sep=0.75pt]   [align=left] {$\alpha_1$};
\draw (467,170) node [anchor=north west][inner sep=0.75pt]   [align=left] {$\alpha_n$};
\draw (400,110) node [anchor=north west][inner sep=0.75pt]   [align=left] {$S$};
\end{tikzpicture}
        \end{center}
        \caption{Two types of connected components of $\mathcal{S}_A\setminus \Gamma_A$.}
        \label{fig:2-type-components}
    \end{figure}

    Note that the connected components described above can be used to determine whether a gentle algebra has finite global dimension.

    \begin{prop}\cite[Theorem 1.1]{LGH24}\label{prop:gldim}
Let $A$ be a gentle one-cycle algebra. Then $A$ has finite global dimension if and only if there is no connected component of the second type in Figure~\ref{fig:2-type-components}.
    \end{prop}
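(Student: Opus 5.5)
The plan is to reduce the statement to the classical algebraic criterion for gentle algebras and then read that criterion off the embedded ribbon graph. The criterion is: a gentle algebra $A=kQ/I$ has infinite global dimension if and only if $Q$ contains an oriented cycle $\alpha_1\alpha_2\cdots\alpha_n$ with \emph{full relations}, meaning $\alpha_{i+1}\alpha_i\in I$ for all $i$, indices taken modulo $n$. I would take this as known; it is standard, since the minimal projective resolution of a simple module runs along chains of relations, and such a chain is infinite exactly when it closes up into an oriented cycle with full relations. So it suffices to show that $Q$ has such a cycle if and only if $\mathcal{S}_A\setminus\iota_A(\Gamma_A)$ has a component of the second type in Figure~\ref{fig:2-type-components}.

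First I will describe the boundary of a component $C$ of $\mathcal{S}_A\setminus\iota_A(\Gamma_A)$ combinatorially. The edges of $\Gamma_A$ are the vertices of $Q$, and two half-edges $h_{\vv,j}$ and $\rho(h_{\vv,j})=h_{\vv,j+1}$ at the same ribbon vertex $\vv$ bound an angle that is exactly the arrow $\alpha_{j+1}$ of $p_{\vv}$. The one exception is the angle between $m(\vv)$ and $\rho(m(\vv))$: by Proposition~\ref{prop:embed}(3) this angle contains the boundary component, so it corresponds to no arrow. Now walk along $\partial C$. At each ribbon vertex we pass through one angle. Two consecutive angles on $\partial C$ lie at the two ends of one edge, that is, at a single vertex $i$ of $Q$, and they belong to the two distinct elements of $\overline{\mathcal{M}}$ through $i$. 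An arrow ending at $i$ from one of these paths, followed by an arrow starting at $i$ from the other, is not part of a common maximal path or cycle. By gentleness, their composite therefore lies in $I$.

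Consequently the arrows met along $\partial C$ form a chain $\alpha_1,\alpha_2,\ldots$ with consecutive compositions in $I$. This chain is interrupted exactly where $\partial C$ passes through a marked angle, i.e.\ a boundary segment. A component of the first type contains a boundary segment $S$, so its arrows form an open chain of relations. A component of the second type contains no marked angle, so it yields a closed oriented cycle with full relations; one still has to check the orientation, namely that going once around $C$ the angle arrows compose head-to-tail. This gives one direction of the equivalence.

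For the converse, take an oriented cycle with full relations. Each relation $\alpha_{i+1}\alpha_i\in I$ means that the angles $\alpha_i$ and $\alpha_{i+1}$ are adjacent across the edge $t(\alpha_i)$ and on the same side of it. Hence the angles of the cycle trace the full boundary of a single component $C$ and pass through no marked angle. The component $C$ contains no boundary segment, yet every component of the complement of the embedded ribbon graph in the annulus must contain some piece of $\partial\mathcal{S}_A$. It follows that $C$ contains an unmarked boundary component, so $C$ is of the second type.

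The main obstacle is the local bookkeeping. One has to verify that the cyclic order from $\rho$, the clockwise convention, and the position of the marking from Proposition~\ref{prop:embed}(3) fit together, so that ``consecutive angles on a face'' really means ``composable arrows with zero composite'', with the correct orientation. One also has to check that the degenerate vertices in $\overline{\mathcal{M}}$ (trivial paths) are handled correctly. I would first verify all of this on Example~\ref{eg:easy-example} and Figure~\ref{fig:surface-of-easy-example}, and then give the general argument vertex by vertex.
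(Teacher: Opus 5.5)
Your argument is correct, but it is not the paper's route. The paper gives no proof here; it imports the statement from \cite[Theorem~1.1]{LGH24}, where homological dimensions of gentle algebras are read off from the polygons of the surface dissection. You instead combine two ingredients: the classical criterion (infinite global dimension if and only if there is an oriented cycle with full relations), and the angle dictionary of the embedded ribbon graph. This goes through.

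The orientation check you flag as the main obstacle is automatic. Reading the boundary of a component along $\Gamma_A$ follows $\rho\circ\iota$, up to the direction of travel. For a half-edge $h$ at an occurrence of $i\in Q_0$, this permutation sends the angle $(\rho^{-1}h,h)$, the arrow ending at that occurrence, to the angle $(\iota h,\rho\iota h)$, the arrow starting at the other occurrence. So consecutive arrow angles compose head-to-tail to an element of $I$, and only marked angles $(m(\vv),\rho(m(\vv)))$ interrupt the chain.

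Two small fixes are needed. First, the two angles across the edge $i$ come from the two distinct \emph{occurrences} of $i$, not from two distinct elements of $\overline{\mathcal{M}}$. These occurrences can lie in the same element: for the $2$-cycle $1\to 2\to 1$ with a single relation, the maximal path visits $1$ twice and that edge of $\Gamma_A$ is a loop. Your gentleness argument is unaffected. Second, in the converse, do not rely on the unproved claim that every component meets $\partial\mathcal{S}_A$. Invoke instead the two-type decomposition of $\mathcal{S}_A\setminus\iota_A(\Gamma_A)$ asserted before Figure~\ref{fig:2-type-components}. A type-one polygon is bounded by one boundary segment together with a stretch of an orbit running from one marked angle to the next. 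Hence a closed orbit without marked angles can only bound a type-two polygon.

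Your route gives a self-contained reason for the statement: type-one polygons are maximal chains of relations, and type-two polygons are cycles with full relations. This is the same bookkeeping the paper uses implicitly in Proposition~\ref{prop:gr-clock-cdt}. It also handles the infinite-dimensional case directly: there the unique cycle carries no relations, so there is neither a cycle with full relations nor a type-two polygon. The citation buys brevity and the finer homological information established in \cite{LGH24}.
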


    We now give the following definition.

    \begin{dfn}\label{dfn:winding-number}
Let $A$ be a graded gentle one-cycle algebra, and let
$(\mathcal{S}_A,M_A)$ be its associated marked annulus, with
$\partial\mathcal{S}_A=B_1\sqcup B_2$. We define the
\emph{winding numbers} of the boundary components as follows.

\begin{enumerate}
    \item Let $S$ be a boundary segment between two consecutive marked points.
    Then $S$ lies on the boundary of a polygon of the first type in
    Figure~\ref{fig:2-type-components}. We define the winding number of $S$ by
    $$
    \omega_\eta(S)=|\alpha_1|+\cdots+|\alpha_n|+1-n.
    $$

   \item Suppose that $B_i$ is a boundary component such that $M_A\cap B_i\neq\emptyset$ and $B_i\notin M_A$. Then the marked points cut $B_i$ into boundary segments $S_1,\ldots,S_m$. Let $\beta_1,\ldots,\beta_\ell$ be the arrows around the marked points in $M_A\cap B_i$. We define the winding number of $B_i$ by
$$\omega_\eta(B_i)=\omega_\eta(S_1)+\cdots+\omega_\eta(S_m)-|\beta_1|-\cdots-|\beta_\ell|.$$

\item Suppose that $B_i$ is a boundary component such that $M_A\cap B_i=\emptyset$. Then $B_i$ is a connected component of the second type in Figure~\ref{fig:2-type-components}. We define the winding number of $B_i$ by
$$\omega_\eta(B_i)=|\alpha_1|+\cdots+|\alpha_n|-n.$$

\item Suppose that $B_i\in M_A$, that is, $B_i$ is a puncture. Let $\beta_1,\ldots,\beta_\ell$ be the arrows around this puncture. We define the winding number of $B_i$ by
$$\omega_\eta(B_i)=-|\beta_1|-\cdots-|\beta_\ell|.$$
\end{enumerate}
    \end{dfn}

    The above definition associates a graded marked annulus to each graded gentle one-cycle algebra.

    \begin{prop}
        Let $A$ be a graded gentle one-cycle algebra. Then $(\mathcal{S}_A,M_A,\eta_A:=\omega_\eta)$ with
$\partial\mathcal{S}_A=B_1\sqcup B_2$ is a graded marked annulus.
    \end{prop}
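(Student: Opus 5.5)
Since $(\mathcal{S}_A,M_A)$ is already a marked annulus by \cref{prop:embed}, all that remains is the condition $\omega_\eta(B_1)=-\omega_\eta(B_2)$, that is, $\omega_\eta(B_1)+\omega_\eta(B_2)=0$. My plan is to show that this sum is a combinatorial count over the decomposition of $\mathcal{S}_A$ cut out by $\Gamma_A$: every arrow degree should cancel, and the remaining integer should equal the Euler characteristic of the annulus, which is $0$.

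The first step is to make \cref{dfn:winding-number} uniform. Write each $\omega_\eta(B_i)$ as a sum over the components of $\mathcal{S}_A\setminus\Gamma_A$ that touch $B_i$, together with a correction at each marked point or puncture on $B_i$. The first-type components are the polygons of Figure~\ref{fig:2-type-components}, and the second-type components are the unmarked boundary components. Every arrow $\alpha$ of $Q$ is an angle between consecutive half-edges at some vertex $\vv$ of $\Gamma_A$. I will check that $|\alpha|$ enters the sum exactly twice with opposite signs. It appears once with sign $+$, as an interior angle $\alpha_j$ of the component (polygon or second-type boundary) on whose boundary it sits. It appears once with sign $-$, as one of the $\beta$'s around the marked point or puncture $\iota_A(\vv)$. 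The second appearance relies on the fact that the arrows around $\vv$ are exactly the arrows of the path $p_{\vv}$. I will verify this matching case by case using the three types of elements of $\overline{\mathcal{M}}$: maximal paths, primitive cycles (the puncture), and trivial paths.

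With the degrees cancelled, the sum $\omega_\eta(B_1)+\omega_\eta(B_2)$ becomes a pure count. Each first-type polygon with $n$ angles contributes $1-n$, and each second-type component contributes $-n$. So the total is
$$
\#\{\text{polygons}\}-\#\{\text{angles}\}=\#\{\text{polygons}\}-\sum_{\vv}\deg^{\ast}(\vv),
$$
where $\deg^{\ast}(\vv)$ is the number of angles at $\vv$ lying in the interior of $\mathcal{S}_A$. This is one less than the valency at a marked point, and equal to the valency at a puncture.

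Next I express this through the numbers of vertices $V$, edges $E$ and faces $F$ of the cell decomposition of $\mathcal{S}_A$ determined by $\Gamma_A$ together with the boundary. The two key inputs are $\sum_{\vv}\mathrm{val}(\vv)=2E$ and the fact that $\#\{\text{polygons}\}$ equals the number of boundary segments. Since there is one boundary segment per marked point on a boundary component that is not a puncture, the count reduces to $V-E$ up to a correction that vanishes. Gentle one-cycle algebras satisfy $|Q_0|=|Q_1|$, and the edges of $\Gamma_A$ are in bijection with $Q_0$. Together with $\chi(\mathcal{S}_A)=0$, this gives total $0$. Equivalently, I could cite \cite[Theorem 1.8]{LP20} or the Poincaré--Hopf formula $\sum_i\omega_\eta(B_i)=4-4g-2b=0$ for a line field on an annulus, once the combinatorial winding numbers are identified with geometric ones as in \cite{OZ22,JSW23}.

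I expect the main obstacle to be the bookkeeping of the special cases rather than any single computation. These are the puncture case (4), the unmarked boundary of case (3) when global dimension is infinite (\cref{prop:gldim}), and the trivial-path vertices of valency one or two. In each of them I must make sure that the $\pm1$ constants match the cell counts. I would first check the formula on Example~\ref{eg:easy-example} and on the $n$-cycle with all length-two relations, and then run the general Euler-characteristic argument.
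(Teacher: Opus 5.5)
Your proof is correct, but it takes a different route from the paper. The paper settles the statement in one line by citing the Poincar\'e--Hopf theorem in the form of \cite[Theorem 3.18]{OZ22}. With $g=0$ and $b=2$ this gives $\omega_\eta(B_1)+\omega_\eta(B_2)=2(2-2g-b)=0$. All the content is thus delegated to identifying the combinatorial winding numbers of Definition~\ref{dfn:winding-number} with those of a line field.

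You instead verify the identity by a direct double count on $\Gamma_A\subseteq\mathcal S_A$. Each arrow is an interior corner of exactly one face of $\mathcal S_A\setminus\Gamma_A$, so it enters with a plus sign through the face terms of cases (1) and (3). It is also an angle at exactly one vertex of $\Gamma_A$, so it enters with a minus sign through the vertex terms of cases (2) and (4). The sum therefore collapses to $\#\{\text{boundary segments}\}-|Q_1|=\#\{\text{marked points}\}-|Q_1|$. Your version is self-contained and shows that the vanishing is just the fact that the underlying graph of $Q$ has exactly one cycle. The paper's version is shorter and works uniformly for any genus and number of boundary components, at the price of relying on the external theorem.

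The last step of your sketch is stated loosely. The count you arrive at is $2(\#\{\text{marked points}\}-|Q_0|)$, not $V-E$. The cleanest finish is a half-edge count:
\begin{itemize}
\item each vertex of $Q$ occurs exactly twice;
\item a marked point coming from a path of length $\ell$ carries $\ell+1$ half-edges;
\item the puncture coming from a primitive cycle of length $\ell$ carries $\ell$ half-edges.
\end{itemize}
Hence $2|Q_0|=|Q_1|+\#\{\text{marked points}\}$, the total equals $2(|Q_0|-|Q_1|)=0$, and $\chi(\mathcal S_A)$ is not needed at all.

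If you do want to argue topologically, note that $\Gamma_A$ is homotopy equivalent to $\mathcal S_A$ with the puncture collapsed to a point, not to $\mathcal S_A$ itself. So the relevant identity is $V-E-\#\{\text{punctures}\}=\chi(\mathcal S_A)$, which gives $2(V-E-\#\{\text{punctures}\})=0$ as required.
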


    \begin{proof}
        This is natural by the Poincar\'{e}-Hopf theorem (see for example,~\cite[Theorem 3.18]{OZ22}). Indeed, for an annulus we have genus $g=0$ and the number of boundary components is $b=2$. Hence $$\omega_\eta(B_1)+\omega_\eta(B_2)=2(2-2g-b)=0.$$
    \end{proof}

    The following example illustrates the construction of the winding numbers.

    \begin{eg} (Example \ref{eg:easy-example} revisited)
    For the boundary segments shown in Figure~\ref{fig:surface-of-easy-example}, we have:
$$\begin{aligned}
\omega_\eta(S_1) &= |\gamma|, &
\omega_\eta(S_2) &= |\alpha|, &
\omega_\eta(S_3) &= |\beta|,\\
\omega_\eta(S'_1) &= |\alpha|+|\beta|-1, &
\omega_\eta(S'_2) &= |\gamma|, &
\omega_\eta(S'_3) &= 1.
\end{aligned}$$
    Let $B_1,B_2$ be the boundary components of $\mathcal{S}_A$ corresponding to $S_1$ and to $S_2,S_3$, respectively. Similarly, let $B'_1,B'_2$ be the boundary components of $\mathcal{S}_B$ corresponding to $S'_1$ and to $S'_2,S'_3$, respectively. Then we have:
$$\begin{aligned}
\omega_\eta(B_1)  &= -\omega_\eta(B_2)  = |\gamma|-|\alpha|-|\beta|,\\
\omega_\eta(B'_1) &= -\omega_\eta(B'_2) = |\alpha|+|\beta|-1-|\gamma|.
\end{aligned}$$
    \end{eg}

    \subsection{Derived equivalences of graded gentle one-cycle algebras}

    We are now ready to show that the derived equivalences between graded gentle one-cycle algebras can be completely characterized in terms of their associated graded marked annuli.

    \begin{thm}\label{thm:classify-graded-gentle}
       Let $A_1$ and $A_2$ be graded gentle one-cycle algebras, and let $(\mathcal{S}_{A_1},M_{A_1},\eta_{A_1})$ with $\partial\mathcal{S}_{A_1}=B_1\sqcup B_2$ and $(\mathcal{S}_{A_2},M_{A_2},\eta_{A_2})$ with $\partial\mathcal{S}_{A_2}=B'_1\sqcup B'_2$ be their associated graded marked annuli. Then $A_1$ and $A_2$ are derived equivalent if and only if, up to a permutation of the two boundary components, one has
        $$|B_i\cap M_{A_1}|=|B'_i\cap M_{A_2}|\quad\text{and}\quad\eta_{A_1}(B_i)=\eta_{A_2}(B'_i)$$ for $i=1,2$.
    \end{thm}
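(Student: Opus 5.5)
The plan is to reduce the statement to the known derived-equivalence classification of graded gentle algebras via graded marked surfaces. For one direction we invoke the result of Haiden--Katzarkov--Kontsevich \cite{HKK17} and Lekili--Polishchuk \cite{LP20}, as refined in \cite{OPS18}. It says that the perfect derived category of a homologically smooth graded gentle algebra is equivalent to the partially wrapped Fukaya category of its graded marked surface. For the other direction we use that $A_1$ and $A_2$ are derived equivalent if and only if their graded marked surfaces are related by an orientation-preserving diffeomorphism. This diffeomorphism must respect the marked points and punctures, together with the homotopy classes of line fields. For gentle algebras of infinite global dimension we use the extension of this correspondence due to Opper--Plamondon--Schroll, where punctures correspond to the components of the second type in Proposition~\ref{prop:gldim}.

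First, we reduce the surface-level condition to numerical data in the annulus case. The diffeomorphism type of a marked annulus $(\mathcal S,M)$ is determined by the unordered pair of boundary data, where each $B_i$ carries either its number of marked points or the label ``puncture''. By \cite[Theorem 1.8]{LP20}, in genus zero the orbit of a line field under the mapping class group is determined by the winding numbers of the boundary components. For an annulus we have $\eta(B_1)=-\eta(B_2)$, so only the value on each boundary matters. Hence two graded marked annuli are equivalent exactly when, after possibly swapping $B_1$ and $B_2$, the marked-point counts agree and the values $\eta$ agree. We must also check that the combinatorial winding numbers of Definition~\ref{dfn:winding-number} agree with the geometric winding numbers of the line field constructed in \cite{HKK17,OPS18}. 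This follows from \cite[Section~3.3]{OZ22} and \cite[Section~3.2]{JSW23}. One subtlety is that a puncture and a boundary component with no marked points must be distinguished. In the formulation $|B_i\cap M_{A_1}|$, a puncture should be read as $B_i\in M$. We will make this convention explicit.

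Second, for ``if'' we apply the geometric realization: diffeomorphic graded marked annuli give equivalent partially wrapped Fukaya categories, hence equivalent $\per$. An equivalence of perfect derived categories of these dg algebras then lifts to one of $\D(A)$ by standard Keller theory, since $\D(A)$ is compactly generated by $\per A$. For ``only if'' we need that the graded marked surface is a derived invariant. When both algebras have finite global dimension, this is the main theorem of \cite{LP20}, the graded version of the Amiot--Plamondon--Schroll invariant. In general we instead cite the invariance of the surface model for graded gentle algebras established by Opper \cite{OPS18} and its sequels. As a cross-check, we can compare with the algebraic invariants in \cite{KY18} and \cite{LZ21}.

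The main obstacle is the ``only if'' direction for algebras of infinite global dimension, where a puncture occurs. The invariants of \cite{LP20} are formulated for homologically smooth and proper categories, and the case with a boundary component without marked points needs care. There I would first try to recover the data intrinsically from $\per A$. The number of marked points on each boundary, and the winding numbers, can be read off from the action of the Serre functor, or the AG-invariant of Avella-Alaminos--Geiss, on the corresponding boundary objects. The grading of the puncture is detected by the degree of the nonzero power of the primitive cycle, that is, by the periodicity of the associated indecomposable objects. We then match these invariants with $\eta$ through Definition~\ref{dfn:winding-number}.
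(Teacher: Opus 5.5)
Your outline for algebras of finite global dimension is sound (it is essentially what \cite[Theorem~1.4]{JSW23} provides). However, there is a genuine gap in the infinite-global-dimension case, in both directions. The realization of $\per A$ as a partially wrapped Fukaya category in \cite{HKK17,LP20} is for homologically smooth graded gentle algebras, and \cite{OPS18} treats ungraded finite-dimensional ones. Neither gives the ``if'' or the ``only if'' direction for a graded gentle one-cycle algebra whose cycle is fully related, and ``Opper \cite{OPS18} and its sequels'' does not name a result that does. Your fallback (Serre functor, AG-invariant, periodicity) is a plan rather than an argument, and it is aimed at the wrong case. In the model used here, a puncture is the image of a relation-free primitive cycle. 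So an algebra with a puncture is infinite-dimensional but of finite global dimension, hence smooth and already within reach of \cite{HKK17,LP20}. Infinite global dimension corresponds instead to a boundary component without marked points (the second type in Proposition~\ref{prop:gldim}). There the unique cycle is fully related, so there is no primitive cycle with nonzero powers whose degree you could use.

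The missing idea, which is all the paper's proof needs, is a dichotomy. The unique cycle of a graded gentle one-cycle algebra cannot be both relation-free and fully related, so the algebra is finite-dimensional or has finite global dimension. The infinite-global-dimension case is therefore a finite-dimensional one, covered by the algebraic classification \cite[Theorem~1.2]{KY18}. The finite-global-dimension case is \cite[Theorem~1.4]{JSW23}. Alternatively, \cite[Theorem~C]{Opp25} handles both at once, and no intrinsic reconstruction of invariants is required.

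Your caveat about punctures, on the other hand, is correct and sharper than the statement as written. Read literally, $|B_i\cap M|=0$ both for a puncture and for an unmarked boundary component. For instance, $k[x]$ with $|x|=0$ and $k[x]/(x^2)$ with $|x|=1$ both give one boundary component with one marked point, one without, and $\eta\equiv 0$. Yet they are not derived equivalent: $\bigoplus_i\Hom(k[x],\Sigma^i k[x])=k[x]$ is infinite-dimensional, while every compact object over the second algebra has finite-dimensional total morphism spaces. So whether $B_i\in M$ must be part of the data.
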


    \begin{proof}
      Indeed, for every graded gentle one-cycle algebra, at least one of the following holds: it is finite-dimensional, or it has finite global dimension. Thus this theorem follows directly from \cite[Theorem~C]{Opp25}. Alternatively, the finite global dimension case follows from \cite[Theorem~1.4]{JSW23}, whereas the finite-dimensional case follows from \cite[Theorem~1.2]{KY18}.
    \end{proof}

    \begin{cor}\label{cor:standard-form}
         Let $A$ be a non-positively graded gentle one-cycle algebra.
\begin{enumerate}
\item If $A$ has finite global dimension, then $A$ is derived equivalent to the graded path algebra $kQ$, where $Q$ is the quiver
$$\begin{tikzcd} & p+q \arrow[r] & \cdots \arrow[r] & p+1 \arrow[rd] & \\ 1 \arrow[rd, "\alpha"'] \arrow[ru] & & & & p \\ & 2 \arrow[r] & \cdots \arrow[r] & p-1 \arrow[ru] & \end{tikzcd}$$
where $p\in\mathbb{Z}_{>0}$, $q\in\mathbb{Z}_{\geq -1}$, and $|\alpha|\in\mathbb{Z}_{\leq 0}$, while all other arrows have degree zero. Note that when $q=-1$, the quiver is a $p$-cycle.

\item If $A$ has infinite global dimension, then $A$ is derived equivalent to the graded quiver algebra $kQ/I$, where $Q$ is the quiver
$$
\begin{tikzcd}
1 \arrow["\alpha"', loop, distance=2em, in=215, out=145] \arrow[r] & 2 \arrow[r] & \cdots \arrow[r] & p
\end{tikzcd}
$$
with $I=\langle\alpha^2\rangle$, $|\alpha|\in\mathbb{Z}_{\leq 0}$, while all other arrows have degree zero.
\end{enumerate}
    \end{cor}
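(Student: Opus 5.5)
The plan is to use \cref{thm:classify-graded-gentle}. It reduces the claim to a combinatorial statement: for a given $A$, find a standard-form algebra $B$ whose graded marked annulus has the same number of marked points on each boundary component, with the same winding numbers, up to swapping $B_1$ and $B_2$. Throughout, let $(\mathcal S_A,M_A,\eta_A)$ have boundary $B_1\sqcup B_2$. Put $m_i:=|B_i\cap M_A|$ and $w:=\eta_A(B_1)=-\eta_A(B_2)$. Note that $m_i=0$ means $B_i$ is a puncture or a component of the second type in Figure~\ref{fig:2-type-components}.

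\textbf{Case (1): finite global dimension.} By \cref{prop:gldim} there is no component of the second type. Following the construction in \cite{OPS18}, a puncture occurs only when there is a primitive cycle $p$ with $p^n\neq0$ for all $n$. For a one-cycle gentle algebra of finite global dimension this happens exactly when the cycle is oriented and has no relations. So either both $m_1,m_2\ge1$, or one boundary is a puncture and the other carries all the marked points.

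Next, I will compute the invariants of the standard quiver in (1) directly from Definition~\ref{dfn:winding-number}, as in the worked example after it.
\begin{itemize}
\item For $q\ge0$ (no relations, non-oriented cycle), the two boundary components carry $p$ and $q+1$ marked points, up to an offset that I will pin down. Their winding numbers are $\pm(\text{const}+|\alpha|)$, with a constant depending on $p,q$.
\item For $q=-1$ (the $p$-cycle without relations), one boundary is a puncture with winding number $-|\alpha|$. The other carries $p$ marked points.
\end{itemize}

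The required realisation statement is that, as $p\ge1$, $q\ge-1$ and $|\alpha|\le0$ vary, every triple $(m_1,m_2,w)$ coming from a non-positively graded $A$ is hit. The key step is to get the right inequality on $w$ for non-positively graded $A$. Each segment contributes $\omega_\eta(S)=\sum|\alpha_j|+1-n\le 1-n$, and the marked-point terms contribute $-\sum|\beta_j|\ge0$. Summing these bounds should confine $w$ to a half-line. Choosing which boundary component is $B_1$ should place $w$ in exactly the range realised by varying $|\alpha|\le0$.

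\textbf{Case (2): infinite global dimension.} Again by \cref{prop:gldim}, there is a component of the second type, i.e.\ a boundary with no marked points that is not a puncture; its winding number is $\sum|\alpha_i|-n$. Since $A$ is one-cycle, this forces the cycle to be oriented with full relations. I will compute the invariants of the loop algebra $kQ/\langle\alpha^2\rangle$ with a tail of length $p-1$. It has one unmarked boundary with winding number $|\alpha|-1$, and the other boundary carries the marked points. I will then match $p$ with the number of marked points and $|\alpha|$ with $\sum|\alpha_i|-n+1$. The latter is $\le0$ because $n\ge1$ and all degrees are $\le0$.

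\textbf{Main obstacle.} The main difficulty is Case (1) with both $m_i\ge1$. There the winding number mixes segment contributions and marked-point contributions, and one must show that the bound on $w$ for a non-positively graded $A$ is exactly compatible with a single arrow $\alpha$ of degree $\le0$ (possibly after the boundary swap). I would first handle this by induction: reduce to a minimal representative via local moves (rotating relations/arrows along the cycle) that preserve $(m_1,m_2,w)$. If that proves awkward, I would fall back on directly bounding $w$ using the identity $\omega(B_1)+\omega(B_2)=0$.
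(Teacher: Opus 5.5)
Your route is the paper's: apply \cref{thm:classify-graded-gentle} and exhibit, for each possible graded marked annulus, a standard-form algebra with the same invariants. Case (2) is handled exactly as in the paper. The unmarked boundary has winding number $\sum|\alpha_i|-n\le -1$, and this is matched by $|\alpha|-1$ with $|\alpha|\le 0$. The issue is case (1), where you have put the crux in the wrong place.

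The step you call key would fail. For finite-dimensional $A$ with $m_1,m_2\ge 1$ there is no half-line constraint on $w$. Your two bounds, $\omega_\eta(S)\le 1-n$ and $-\sum|\beta_j|\ge 0$, point in opposite directions, and in fact both signs occur. In Example~\ref{eg:easy-example} one has $\eta_A(B_1)=|\gamma|-|\alpha|-|\beta|$ with marked-point counts $(1,2)$. Taking $|\alpha|=-1$ (others $0$) gives $+1$; taking $|\gamma|=-1$ gives $-1$. Both gradings are non-positive.

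No inequality is needed anyway. Do the computation you deferred. For $q\ge 0$ the standard quiver has $q+1$ marked points on one boundary, with winding number $-|\alpha|$, and $p-1$ marked points on the other, with winding number $|\alpha|$. There is no additive constant; the $p=3$, $q=0$ computation after Definition~\ref{dfn:winding-number} is exactly this. Since $\eta_A(B_1)=-\eta_A(B_2)$, one boundary has non-positive winding number. Label it as the $(p-1)$-side and choose $p\ge 2$, $q\ge 0$, $|\alpha|\le 0$ accordingly. Then every pair of positive counts with every winding number is realised, so non-positivity of the grading plays no role in this subcase. This is what the paper means by ``the winding number of the middle boundary component can be chosen as a non-positive integer.'' Your local-move fallback is also unnecessary, since the classification theorem already converts equal invariants into a derived equivalence.

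Non-positivity is genuinely needed in case (1) only in the puncture subcase, which you did not spell out. There the puncture cannot be swapped with the marked boundary. So you must use that its winding number $-\sum|\beta_j|$, summed over the cycle arrows, is $\ge 0$, and hence equals $-|\alpha|$ for some $|\alpha|\le 0$. The marked boundary carries $|Q_0|$ points, which you match by the length of the cycle in the standard form.
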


    \begin{proof}
   By Theorem~\ref{thm:classify-graded-gentle}, it suffices to choose, for each graded marked annulus, a representative marked ribbon graph. We choose representatives as follows:
\begin{enumerate}
\item For infinite-dimensional gentle one-cycle algebras, since the grading is non-positive, the winding number of the puncture is a non-negative integer, and we take the surface model shown in Figure~\ref{fig:representative-algebras-infinite}.
\item For finite-dimensional gentle one-cycle algebras of finite global dimension, we take the surface model shown on the left in Figure~\ref{fig:representative-algebras}, where the winding number of the middle boundary component can be chosen as a non-positive integer.
\item For finite-dimensional gentle one-cycle algebras of infinite global dimension, we take the surface model shown on the right in Figure~\ref{fig:representative-algebras}. Since the grading is non-positive, the winding number of the boundary component without marked points is necessarily a negative integer.
\end{enumerate}
     \begin{figure}
        \begin{center}

\tikzset{every picture/.style={line width=0.75pt}} 

\begin{tikzpicture}[x=0.75pt,y=0.75pt,yscale=-1,xscale=1]

\draw  [fill={rgb, 255:red, 0; green, 0; blue, 0 }  ,fill opacity=1 ] (116,110) .. controls (116,107.79) and (117.79,106) .. (120,106) .. controls (122.21,106) and (124,107.79) .. (124,110) .. controls (124,112.21) and (122.21,114) .. (120,114) .. controls (117.79,114) and (116,112.21) .. (116,110) -- cycle ;
\draw  [draw opacity=0] (215.39,140.11) .. controls (202.61,180.63) and (164.74,210) .. (120,210) .. controls (64.77,210) and (20,165.23) .. (20,110) .. controls (20,54.77) and (64.77,10) .. (120,10) .. controls (165.42,10) and (203.77,40.28) .. (215.96,81.76) -- (120,110) -- cycle ; \draw   (215.39,140.11) .. controls (202.61,180.63) and (164.74,210) .. (120,210) .. controls (64.77,210) and (20,165.23) .. (20,110) .. controls (20,54.77) and (64.77,10) .. (120,10) .. controls (165.42,10) and (203.77,40.28) .. (215.96,81.76) ;  
\draw  [fill={rgb, 255:red, 0; green, 0; blue, 0 }  ,fill opacity=1 ] (116,210) .. controls (116,207.79) and (117.79,206) .. (120,206) .. controls (122.21,206) and (124,207.79) .. (124,210) .. controls (124,212.21) and (122.21,214) .. (120,214) .. controls (117.79,214) and (116,212.21) .. (116,210) -- cycle ;
\draw  [fill={rgb, 255:red, 0; green, 0; blue, 0 }  ,fill opacity=1 ] (16,110) .. controls (16,107.79) and (17.79,106) .. (20,106) .. controls (22.21,106) and (24,107.79) .. (24,110) .. controls (24,112.21) and (22.21,114) .. (20,114) .. controls (17.79,114) and (16,112.21) .. (16,110) -- cycle ;
\draw  [fill={rgb, 255:red, 0; green, 0; blue, 0 }  ,fill opacity=1 ] (116,10) .. controls (116,7.79) and (117.79,6) .. (120,6) .. controls (122.21,6) and (124,7.79) .. (124,10) .. controls (124,12.21) and (122.21,14) .. (120,14) .. controls (117.79,14) and (116,12.21) .. (116,10) -- cycle ;
\draw  [draw opacity=0][dash pattern={on 0.84pt off 2.51pt}] (209.84,66.03) .. controls (216.35,79.3) and (220,94.22) .. (220,110) .. controls (220,125.98) and (216.25,141.09) .. (209.58,154.49) -- (120,110) -- cycle ; \draw  [dash pattern={on 0.84pt off 2.51pt}] (209.84,66.03) .. controls (216.35,79.3) and (220,94.22) .. (220,110) .. controls (220,125.98) and (216.25,141.09) .. (209.58,154.49) ;  
\draw [line width=1.5]    (18.86,108.86) -- (118.86,108.86) ;
\draw [line width=1.5]    (120,10) -- (120,110) ;
\draw [line width=1.5]    (120,110) -- (120,210) ;
\draw  [draw opacity=0] (100.31,106.46) .. controls (101.76,98.33) and (108.13,91.9) .. (116.23,90.35) -- (120,110) -- cycle ; \draw   (100.31,106.46) .. controls (101.76,98.33) and (108.13,91.9) .. (116.23,90.35) ;  
\draw  [draw opacity=0] (117.4,129.83) .. controls (108.7,128.7) and (101.75,121.98) .. (100.29,113.39) -- (120,110) -- cycle ; \draw   (117.4,129.83) .. controls (108.7,128.7) and (101.75,121.98) .. (100.29,113.39) ;  
\draw  [draw opacity=0][dash pattern={on 0.84pt off 2.51pt}] (121.45,90.05) .. controls (131.82,90.8) and (140,99.44) .. (140,110) .. controls (140,120.2) and (132.37,128.61) .. (122.51,129.84) -- (120,110) -- cycle ; \draw  [dash pattern={on 0.84pt off 2.51pt}] (121.45,90.05) .. controls (131.82,90.8) and (140,99.44) .. (140,110) .. controls (140,120.2) and (132.37,128.61) .. (122.51,129.84) ;  
\draw   (111.46,88.79) -- (116.78,90.5) -- (112.22,93.73) ;
\draw   (99,119.02) -- (100.32,113.59) -- (103.88,117.9) ;

\draw (47.33,91.33) node [anchor=north west][inner sep=0.75pt]   [align=left] {$1$};
\draw (124.67,37.33) node [anchor=north west][inner sep=0.75pt]   [align=left] {$2$};
\draw (123.33,171) node [anchor=north west][inner sep=0.75pt]   [align=left] {$p$};
\draw (93,86) node [anchor=north west][inner sep=0.75pt]   [align=left] {$\alpha$};
\end{tikzpicture}
        \end{center}
            \caption{Surface models for infinite-dimensional gentle one-cycle algebras.}
            \label{fig:representative-algebras-infinite}
        \end{figure}
        \begin{figure}
        \begin{center}
\begin{tikzpicture}[x=0.75pt,y=0.75pt,yscale=-1,xscale=1]

\draw  [draw opacity=0] (218.45,146.93) .. controls (204.84,185.98) and (167.69,214) .. (124,214) .. controls (68.77,214) and (24,169.23) .. (24,114) .. controls (24,58.77) and (68.77,14) .. (124,14) .. controls (172.37,14) and (212.72,48.34) .. (221.99,93.97) -- (124,114) -- cycle ; \draw   (218.45,146.93) .. controls (204.84,185.98) and (167.69,214) .. (124,214) .. controls (68.77,214) and (24,169.23) .. (24,114) .. controls (24,58.77) and (68.77,14) .. (124,14) .. controls (172.37,14) and (212.72,48.34) .. (221.99,93.97) ;  
\draw  [draw opacity=0][fill={rgb, 255:red, 230; green, 230; blue, 230 }  ,fill opacity=1 ] (148.15,120.49) .. controls (145.29,131.15) and (135.56,139) .. (124,139) .. controls (110.19,139) and (99,127.81) .. (99,114) .. controls (99,100.19) and (110.19,89) .. (124,89) .. controls (133.97,89) and (142.58,94.84) .. (146.59,103.28) -- (124,114) -- cycle ; \draw   (148.15,120.49) .. controls (145.29,131.15) and (135.56,139) .. (124,139) .. controls (110.19,139) and (99,127.81) .. (99,114) .. controls (99,100.19) and (110.19,89) .. (124,89) .. controls (133.97,89) and (142.58,94.84) .. (146.59,103.28) ;  
\draw  [draw opacity=0][fill={rgb, 255:red, 230; green, 230; blue, 230 }  ,fill opacity=1 ][dash pattern={on 0.84pt off 2.51pt}] (146.33,102.75) .. controls (148.04,106.13) and (149,109.95) .. (149,114) .. controls (149,116.34) and (148.68,118.61) .. (148.07,120.76) -- (124,114) -- cycle ; \draw  [dash pattern={on 0.84pt off 2.51pt}] (146.33,102.75) .. controls (148.04,106.13) and (149,109.95) .. (149,114) .. controls (149,116.34) and (148.68,118.61) .. (148.07,120.76) ;  
\draw  [draw opacity=0][dash pattern={on 0.84pt off 2.51pt}] (214.18,70.73) .. controls (220.47,83.83) and (224,98.5) .. (224,114) .. controls (224,132.18) and (219.15,149.23) .. (210.67,163.92) -- (124,114) -- cycle ; \draw  [dash pattern={on 0.84pt off 2.51pt}] (214.18,70.73) .. controls (220.47,83.83) and (224,98.5) .. (224,114) .. controls (224,132.18) and (219.15,149.23) .. (210.67,163.92) ;  
\draw  [fill={rgb, 255:red, 0; green, 0; blue, 0 }  ,fill opacity=1 ] (120,89) .. controls (120,86.79) and (121.79,85) .. (124,85) .. controls (126.21,85) and (128,86.79) .. (128,89) .. controls (128,91.21) and (126.21,93) .. (124,93) .. controls (121.79,93) and (120,91.21) .. (120,89) -- cycle ;
\draw  [fill={rgb, 255:red, 0; green, 0; blue, 0 }  ,fill opacity=1 ] (95,114) .. controls (95,111.79) and (96.79,110) .. (99,110) .. controls (101.21,110) and (103,111.79) .. (103,114) .. controls (103,116.21) and (101.21,118) .. (99,118) .. controls (96.79,118) and (95,116.21) .. (95,114) -- cycle ;
\draw  [fill={rgb, 255:red, 0; green, 0; blue, 0 }  ,fill opacity=1 ] (120,139) .. controls (120,136.79) and (121.79,135) .. (124,135) .. controls (126.21,135) and (128,136.79) .. (128,139) .. controls (128,141.21) and (126.21,143) .. (124,143) .. controls (121.79,143) and (120,141.21) .. (120,139) -- cycle ;
\draw  [fill={rgb, 255:red, 0; green, 0; blue, 0 }  ,fill opacity=1 ] (120,214) .. controls (120,211.79) and (121.79,210) .. (124,210) .. controls (126.21,210) and (128,211.79) .. (128,214) .. controls (128,216.21) and (126.21,218) .. (124,218) .. controls (121.79,218) and (120,216.21) .. (120,214) -- cycle ;
\draw  [fill={rgb, 255:red, 0; green, 0; blue, 0 }  ,fill opacity=1 ] (20,114) .. controls (20,111.79) and (21.79,110) .. (24,110) .. controls (26.21,110) and (28,111.79) .. (28,114) .. controls (28,116.21) and (26.21,118) .. (24,118) .. controls (21.79,118) and (20,116.21) .. (20,114) -- cycle ;
\draw  [fill={rgb, 255:red, 0; green, 0; blue, 0 }  ,fill opacity=1 ] (120,14) .. controls (120,11.79) and (121.79,10) .. (124,10) .. controls (126.21,10) and (128,11.79) .. (128,14) .. controls (128,16.21) and (126.21,18) .. (124,18) .. controls (121.79,18) and (120,16.21) .. (120,14) -- cycle ;
\draw [line width=1.5]    (124,89) .. controls (140.18,76.86) and (188.4,72.8) .. (195.6,107.6) .. controls (202.8,142.4) and (172.59,187.56) .. (124,214) ;
\draw [line width=1.5]    (124,89) .. controls (114.44,83.61) and (60.8,62.4) .. (50.8,106.8) .. controls (40.8,151.2) and (64.4,172.8) .. (80,186.8) .. controls (95.6,200.8) and (114.09,207.42) .. (124,214) ;
\draw [line width=1.5]    (99,114) .. controls (82,108.8) and (105.6,185.6) .. (124,214) ;
\draw [line width=1.5]    (124,139) -- (124,214) ;
\draw [line width=1.5]    (24,114) .. controls (39.68,85.64) and (51.2,61.2) .. (77.2,57.6) .. controls (103.2,54) and (116.54,76.72) .. (124,89) ;
\draw [line width=1.5]    (124,14) -- (124,89) ;
\draw  [draw opacity=0] (106.25,79.77) .. controls (107.44,77.49) and (109.05,75.47) .. (110.98,73.81) -- (124,89) -- cycle ; \draw   (106.25,79.77) .. controls (107.44,77.49) and (109.05,75.47) .. (110.98,73.81) ;  
\draw  [draw opacity=0] (114.85,71.21) .. controls (117.05,70.08) and (119.49,69.34) .. (122.07,69.09) -- (124,89) -- cycle ; \draw   (114.85,71.21) .. controls (117.05,70.08) and (119.49,69.34) .. (122.07,69.09) ;  
\draw  [draw opacity=0][dash pattern={on 0.84pt off 2.51pt}] (126.44,69.15) .. controls (133.08,69.96) and (138.72,74.02) .. (141.7,79.69) -- (124,89) -- cycle ; \draw  [dash pattern={on 0.84pt off 2.51pt}] (126.44,69.15) .. controls (133.08,69.96) and (138.72,74.02) .. (141.7,79.69) ;  
\draw  [draw opacity=0] (107.72,202.37) .. controls (109.13,200.41) and (110.89,198.7) .. (112.9,197.36) -- (124,214) -- cycle ; \draw   (107.72,202.37) .. controls (109.13,200.41) and (110.89,198.7) .. (112.9,197.36) ;  
\draw  [draw opacity=0] (116.36,195.51) .. controls (118.15,194.77) and (120.06,194.28) .. (122.07,194.09) -- (124,214) -- cycle ; \draw   (116.36,195.51) .. controls (118.15,194.77) and (120.06,194.28) .. (122.07,194.09) ;  
\draw  [draw opacity=0][dash pattern={on 0.84pt off 2.51pt}] (126.44,194.15) .. controls (131.98,194.82) and (136.81,197.76) .. (140,202) -- (124,214) -- cycle ; \draw  [dash pattern={on 0.84pt off 2.51pt}] (126.44,194.15) .. controls (131.98,194.82) and (136.81,197.76) .. (140,202) ;  
\draw   (105.76,75.29) -- (111.07,73.54) -- (109.29,78.84) ;
\draw   (116.78,67.7) -- (122.16,69.24) -- (117.7,72.62) ;
\draw   (117.09,192.65) -- (122.53,193.92) -- (118.25,197.51) ;
\draw   (107.76,198.89) -- (113.07,197.14) -- (111.29,202.44) ;
\draw  [draw opacity=0] (518.45,146.93) .. controls (504.84,185.98) and (467.69,214) .. (424,214) .. controls (368.77,214) and (324,169.23) .. (324,114) .. controls (324,58.77) and (368.77,14) .. (424,14) .. controls (472.37,14) and (512.72,48.34) .. (521.99,93.97) -- (424,114) -- cycle ; \draw   (518.45,146.93) .. controls (504.84,185.98) and (467.69,214) .. (424,214) .. controls (368.77,214) and (324,169.23) .. (324,114) .. controls (324,58.77) and (368.77,14) .. (424,14) .. controls (472.37,14) and (512.72,48.34) .. (521.99,93.97) ;  
\draw  [draw opacity=0][dash pattern={on 0.84pt off 2.51pt}] (514.18,70.73) .. controls (520.47,83.83) and (524,98.5) .. (524,114) .. controls (524,132.18) and (519.15,149.23) .. (510.67,163.92) -- (424,114) -- cycle ; \draw  [dash pattern={on 0.84pt off 2.51pt}] (514.18,70.73) .. controls (520.47,83.83) and (524,98.5) .. (524,114) .. controls (524,132.18) and (519.15,149.23) .. (510.67,163.92) ;  
\draw  [fill={rgb, 255:red, 0; green, 0; blue, 0 }  ,fill opacity=1 ] (420,214) .. controls (420,211.79) and (421.79,210) .. (424,210) .. controls (426.21,210) and (428,211.79) .. (428,214) .. controls (428,216.21) and (426.21,218) .. (424,218) .. controls (421.79,218) and (420,216.21) .. (420,214) -- cycle ;
\draw  [fill={rgb, 255:red, 0; green, 0; blue, 0 }  ,fill opacity=1 ] (320,114) .. controls (320,111.79) and (321.79,110) .. (324,110) .. controls (326.21,110) and (328,111.79) .. (328,114) .. controls (328,116.21) and (326.21,118) .. (324,118) .. controls (321.79,118) and (320,116.21) .. (320,114) -- cycle ;
\draw  [fill={rgb, 255:red, 0; green, 0; blue, 0 }  ,fill opacity=1 ] (420,14) .. controls (420,11.79) and (421.79,10) .. (424,10) .. controls (426.21,10) and (428,11.79) .. (428,14) .. controls (428,16.21) and (426.21,18) .. (424,18) .. controls (421.79,18) and (420,16.21) .. (420,14) -- cycle ;
\draw  [fill={rgb, 255:red, 230; green, 230; blue, 230 }  ,fill opacity=1 ] (399,114) .. controls (399,100.19) and (410.19,89) .. (424,89) .. controls (437.81,89) and (449,100.19) .. (449,114) .. controls (449,127.81) and (437.81,139) .. (424,139) .. controls (410.19,139) and (399,127.81) .. (399,114) -- cycle ;
\draw [line width=1.5]    (324,114) .. controls (324.57,110.97) and (401.67,80) .. (435.67,81.33) .. controls (469.67,82.67) and (469.67,152.67) .. (428.67,146) .. controls (387.67,139.33) and (346,119.33) .. (328,114) ;
\draw [line width=1.5]    (324,114) .. controls (328.42,129.06) and (338.86,157.8) .. (359.67,178) .. controls (380.47,198.2) and (410.4,208.93) .. (424,214) ;
\draw [line width=1.5]    (324,114) .. controls (337.17,124.67) and (341.83,133.67) .. (376.17,151) .. controls (410.5,168.33) and (477,184) .. (493.67,151.33) .. controls (510.33,118.67) and (444.09,27.33) .. (424,14) ;
\draw  [draw opacity=0] (342.83,107.25) .. controls (343.59,109.36) and (344,111.63) .. (344,114) .. controls (344,115.15) and (343.9,116.27) .. (343.72,117.37) -- (324,114) -- cycle ; \draw   (342.83,107.25) .. controls (343.59,109.36) and (344,111.63) .. (344,114) .. controls (344,115.15) and (343.9,116.27) .. (343.72,117.37) ;  
\draw   (346.53,112.53) -- (343.9,117.46) -- (341.53,112.4) ;
\draw  [draw opacity=0] (342.61,121.33) .. controls (341.95,123.02) and (341.06,124.6) .. (339.98,126.03) -- (324,114) -- cycle ; \draw   (342.61,121.33) .. controls (341.95,123.02) and (341.06,124.6) .. (339.98,126.03) ;  
\draw   (344.66,122.54) -- (340.16,125.86) -- (340.21,120.27) ;
\draw  [draw opacity=0][dash pattern={on 0.84pt off 2.51pt}] (337.13,129.09) .. controls (335.35,130.63) and (333.3,131.87) .. (331.06,132.72) -- (324,114) -- cycle ; \draw  [dash pattern={on 0.84pt off 2.51pt}] (337.13,129.09) .. controls (335.35,130.63) and (333.3,131.87) .. (331.06,132.72) ;  

\draw (96,184) node [anchor=north west][inner sep=0.75pt]   [align=left] {$\alpha$};
\draw (349,107) node [anchor=north west][inner sep=0.75pt]   [align=left] {$\alpha$};
\draw (37.33,126.33) node [anchor=north west][inner sep=0.75pt]   [align=left] {$1$};
\draw (85.33,146) node [anchor=north west][inner sep=0.75pt]   [align=left] {$2$};
\draw (126.67,153.33) node [anchor=north west][inner sep=0.75pt]   [align=left] {$3$};
\draw (198,118.33) node [anchor=north west][inner sep=0.75pt]   [align=left] {$p$};
\draw (72,40.33) node [anchor=north west][inner sep=0.75pt]   [align=left] {$p+q$};
\draw (393.33,68.67) node [anchor=north west][inner sep=0.75pt]   [align=left] {$1$};
\draw (478,139.33) node [anchor=north west][inner sep=0.75pt]   [align=left] {$2$};
\draw (378.67,175) node [anchor=north west][inner sep=0.75pt]   [align=left] {$p$};
\end{tikzpicture}
        \end{center}
            \caption{Surface models for finite-dimensional gentle one-cycle algebras.}
            \label{fig:representative-algebras}
        \end{figure}
    \end{proof}

\begin{rmk}\label{rmk:KY-form}
    We note that, in the case of infinite global dimension, our choice of representative algebra is slightly different from that in~\cite{KY18}. Since for a non-positively graded gentle one-cycle algebra, the representative algebra of its derived equivalence class chosen in~\cite[Theorem~1.1(2)]{KY18} may be non-positively graded. The choice of representative in~\cite[Theorem~1.1(2)]{KY18} corresponds to the marked ribbon graph shown in Figure~\ref{fig:KY-rep}.
    \begin{figure}
        \begin{center}
\begin{tikzpicture}[x=0.75pt,y=0.75pt,yscale=-1,xscale=1]

\draw  [draw opacity=0] (234.45,146.93) .. controls (220.84,185.98) and (183.69,214) .. (140,214) .. controls (84.77,214) and (40,169.23) .. (40,114) .. controls (40,58.77) and (84.77,14) .. (140,14) .. controls (188.37,14) and (228.72,48.34) .. (237.99,93.97) -- (140,114) -- cycle ; \draw   (234.45,146.93) .. controls (220.84,185.98) and (183.69,214) .. (140,214) .. controls (84.77,214) and (40,169.23) .. (40,114) .. controls (40,58.77) and (84.77,14) .. (140,14) .. controls (188.37,14) and (228.72,48.34) .. (237.99,93.97) ;  
\draw  [draw opacity=0][dash pattern={on 0.84pt off 2.51pt}] (230.18,70.73) .. controls (236.47,83.83) and (240,98.5) .. (240,114) .. controls (240,132.18) and (235.15,149.23) .. (226.67,163.92) -- (140,114) -- cycle ; \draw  [dash pattern={on 0.84pt off 2.51pt}] (230.18,70.73) .. controls (236.47,83.83) and (240,98.5) .. (240,114) .. controls (240,132.18) and (235.15,149.23) .. (226.67,163.92) ;  
\draw  [fill={rgb, 255:red, 0; green, 0; blue, 0 }  ,fill opacity=1 ] (136,214) .. controls (136,211.79) and (137.79,210) .. (140,210) .. controls (142.21,210) and (144,211.79) .. (144,214) .. controls (144,216.21) and (142.21,218) .. (140,218) .. controls (137.79,218) and (136,216.21) .. (136,214) -- cycle ;
\draw  [fill={rgb, 255:red, 0; green, 0; blue, 0 }  ,fill opacity=1 ] (36,114) .. controls (36,111.79) and (37.79,110) .. (40,110) .. controls (42.21,110) and (44,111.79) .. (44,114) .. controls (44,116.21) and (42.21,118) .. (40,118) .. controls (37.79,118) and (36,116.21) .. (36,114) -- cycle ;
\draw  [fill={rgb, 255:red, 0; green, 0; blue, 0 }  ,fill opacity=1 ] (136,14) .. controls (136,11.79) and (137.79,10) .. (140,10) .. controls (142.21,10) and (144,11.79) .. (144,14) .. controls (144,16.21) and (142.21,18) .. (140,18) .. controls (137.79,18) and (136,16.21) .. (136,14) -- cycle ;
\draw  [fill={rgb, 255:red, 230; green, 230; blue, 230 }  ,fill opacity=1 ] (115,114) .. controls (115,100.19) and (126.19,89) .. (140,89) .. controls (153.81,89) and (165,100.19) .. (165,114) .. controls (165,127.81) and (153.81,139) .. (140,139) .. controls (126.19,139) and (115,127.81) .. (115,114) -- cycle ;
\draw [line width=1.5]    (40,114) .. controls (101,87) and (119,62.33) .. (140,14) ;
\draw [line width=1.5]    (40,114) .. controls (82.33,127.67) and (121.67,161.67) .. (140,214) ;
\draw [line width=1.5]    (140,214) .. controls (159,177) and (179.67,147) .. (219,134.33) ;
\draw [line width=1.5]    (140,14) .. controls (157,51) and (179.67,78.33) .. (217.67,99) ;
\draw  [draw opacity=0][dash pattern={on 0.84pt off 2.51pt}] (208.56,134.59) .. controls (205.26,129.77) and (203.33,123.94) .. (203.33,117.67) .. controls (203.33,110.13) and (206.11,103.25) .. (210.7,97.98) -- (233.33,117.67) -- cycle ; \draw  [dash pattern={on 0.84pt off 2.51pt}] (208.56,134.59) .. controls (205.26,129.77) and (203.33,123.94) .. (203.33,117.67) .. controls (203.33,110.13) and (206.11,103.25) .. (210.7,97.98) ;  
\draw  [draw opacity=0] (146.84,32.8) .. controls (144.71,33.58) and (142.4,34) .. (140,34) .. controls (137.75,34) and (135.59,33.63) .. (133.58,32.95) -- (140,14) -- cycle ; \draw   (146.84,32.8) .. controls (144.71,33.58) and (142.4,34) .. (140,34) .. controls (137.75,34) and (135.59,33.63) .. (133.58,32.95) ;  
\draw  [draw opacity=0] (59.1,108.04) .. controls (59.68,109.93) and (60,111.93) .. (60,114) .. controls (60,115.7) and (59.79,117.35) .. (59.39,118.92) -- (40,114) -- cycle ; \draw   (59.1,108.04) .. controls (59.68,109.93) and (60,111.93) .. (60,114) .. controls (60,115.7) and (59.79,117.35) .. (59.39,118.92) ;  
\draw  [draw opacity=0] (134.77,194.69) .. controls (136.44,194.24) and (138.19,194) .. (140,194) .. controls (142.68,194) and (145.24,194.53) .. (147.58,195.49) -- (140,214) -- cycle ; \draw   (134.77,194.69) .. controls (136.44,194.24) and (138.19,194) .. (140,194) .. controls (142.68,194) and (145.24,194.53) .. (147.58,195.49) ;  
\draw   (62.51,114.19) -- (59.65,118.99) -- (57.52,113.82) ;
\draw   (138.18,36.41) -- (133.71,33.05) -- (139.07,31.49) ;
\draw   (144.13,192.07) -- (148.23,195.86) -- (142.74,196.87) ;
\end{tikzpicture}
        \end{center}
        \caption{Surface models of representative algebras in \cite[Theorem~1.1(2)]{KY18}}
        \label{fig:KY-rep}
    \end{figure}
\end{rmk}

Now, let $A = kQ/I$ be a graded gentle one-cycle algebra. Define $d_{+}$ (resp. $d_{-}$) as the difference between the number of clockwise (resp. counterclockwise) oriented relations and the sum of the degrees of all clockwise (resp. counterclockwise) oriented arrows.We say that $A$ satisfies the \emph{graded clock condition} if $d_{+}=d_{-}.$

In \cite[Conjecture~9.5]{KY18}, it was conjectured that derived equivalences preserve the graded clock condition. We provide a proof of this conjecture.

\begin{prop}\label{prop:gr-clock-cdt}
    Let $A = kQ/I$ be a graded gentle one-cycle algebra, and let $(\mathcal{S}_{A}, M_{A}, \eta_{A})$ be its associated graded marked annulus with $\partial\mathcal{S}_{A}=B_1\sqcup B_2$. Then the following conditions are equivalent:
\begin{enumerate}
\item $A$ satisfies the graded clock condition;
\item the grading $\eta_{A}$ is zero;
\item $A$ is derived equivalent either to the path algebra $kQ$ with the trivial grading, where $Q$ is the quiver given in Corollary~\ref{cor:standard-form}(1), or to an algebra $kQ/I$, where $Q$ is an $n$-cycle, $I$ is generated by all paths of length two, and exactly one arrow has positive degree $n$.
\end{enumerate}
\end{prop}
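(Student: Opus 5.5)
The plan is to prove $(1)\Leftrightarrow(2)$ by a direct combinatorial computation of $\eta_A(B_1)$ in terms of $d_+$ and $d_-$, and to deduce $(2)\Leftrightarrow(3)$ from \cref{thm:classify-graded-gentle}.

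\textbf{Step 1: the key identity.} I would show that, up to sign and a choice of labelling of the boundary components,
\[
\eta_A(B_1)=d_+-d_-.
\]
Fix the orientation of the unique cycle $c$ of $Q$ so that, under the embedding of \cref{prop:embed}, $B_1$ lies on the "clockwise" side of the image of $c$. Each vertex of $\Gamma_A$ comes from an element of $\overline{\mathcal M}$. The polygons of the first type in Figure~\ref{fig:2-type-components} that touch $B_1$, together with the arrows $\beta_j$ around marked points on $B_1$, should account for exactly the arrows and relations sitting on the $B_1$-side of the cycle.

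In \cref{dfn:winding-number}, each boundary segment contributes $\sum|\alpha_i|+1-n$, and each marked point subtracts the degrees of the arrows around it. Summing over the segments of $B_1$, I expect two cancellations:

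\begin{enumerate}
\item The arrows not on the cycle (the trees hanging off it) cancel in pairs. Each such arrow appears once with sign $+$ in a polygon and once with sign $-$ around a marked point on the same boundary, or else it appears only on the $B_2$-side.
\item The terms $1-n$ add up to the count of relations on that side of the cycle.
\end{enumerate}

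What remains is (relations oriented one way) $-$ (degrees of the arrows of $c$ oriented one way), minus the same quantity for the other orientation. I would verify this first on the two representative shapes: the no-relation case of Example~\ref{eg:easy-example} and the case with one relation. Then I would argue by induction, using local moves that add a tree arrow, add an arrow on the cycle, or add a relation, and check that each move leaves $\eta_A(B_1)-(d_+-d_-)$ unchanged.

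The cases where a boundary component has no marked points, or is a puncture (cases (3) and (4) of \cref{dfn:winding-number}), are handled the same way. There $B_i$ corresponds to an oriented cycle with full relations, or with no relations.

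\textbf{Step 2: $(1)\Leftrightarrow(2)$.} By Poincaré–Hopf, $\eta_A(B_2)=-\eta_A(B_1)$. So $\eta_A=0$ holds if and only if $\eta_A(B_1)=0$, which by Step 1 holds if and only if $d_+=d_-$.

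\textbf{Step 3: $(2)\Leftrightarrow(3)$.} Both algebras listed in (3) have grading $\eta=0$:
\begin{enumerate}
\item For $kQ$ of type $\widetilde A$ with the trivial grading, both boundary winding numbers vanish by the computation in Example~\ref{eg:easy-example} with all degrees zero.
\item For the $n$-cycle with all length-two relations and one arrow of degree $n$, one boundary has no marked points. By case (3) of \cref{dfn:winding-number} its winding number is $n-n=0$.
\end{enumerate}
Conversely, suppose $\eta_A=0$. By \cref{thm:classify-graded-gentle}, the derived equivalence class of $A$ is determined by the numbers of marked points on each boundary, together with $\eta$ and the puncture data.

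\begin{enumerate}
\item If no boundary component is empty of marked points and none is a puncture, the $\widetilde A$ quiver with $p$ and $q$ chosen to match the marked point counts realizes the class.
\item If one boundary component has no marked points, the other boundary must carry $n$ marked points. The $n$-cycle with full relations, with degrees adjusted so that $\eta=0$, realizes it.
\item The puncture case cannot give $\eta=0$ in a way compatible with the list in (3). Here I expect to use that a puncture forces an infinite-dimensional algebra. Its winding number, $-\sum|\beta_i|$, can vanish, since $k[x]$-type algebras with degree-zero loops are allowed. I would need to check whether such a case is derived equivalent to the $\widetilde A$ case with $q=-1$, for example via the $p$-cycle without relations. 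In that setting the oriented cycle appears as a puncture, so it is already covered by the first item.
\end{enumerate}

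\textbf{Main obstacle.} The hardest step is the bookkeeping in Step 1. This means matching the clockwise and counterclockwise relations and arrows of the cycle with the two boundary components, including the correct sign conventions. I would attack it with the induction on local moves rather than a global count.
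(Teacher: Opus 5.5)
Your architecture matches the paper's: $(2)\Leftrightarrow(3)$ comes from Theorem~\ref{thm:classify-graded-gentle}, and $(1)\Leftrightarrow(2)$ comes from an identity $\eta_A(B_1)=\pm(d_+-d_-)$. Step 3 is fine, except that the puncture case is not incompatible with the list. The oriented $p$-cycle with trivial grading is the $q=-1$ member of Corollary~\ref{cor:standard-form}(1), and it is exactly a puncture of winding number $0$.

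The gap is Step 1. It carries the entire content of $(1)\Leftrightarrow(2)$, but you only state it as an expectation, and the second of your two expected cancellations is false. The terms $1-n$ over the segments of a boundary do not add up to the number of relations oriented its way. Take $B=kQ/(\beta\alpha)$ in Example~\ref{eg:easy-example}. The boundary $B_1'$ carries only the marked point $\gamma$, and no relation is oriented like $\gamma$. Yet its single segment $S_1'$ has the two corners $\alpha,\beta$ and contributes $1-2=-1$. In general these terms produce $r_1-r_2$, and they are the only source of the constant $-r_2$ in $d_+-d_-$.

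The induction is also not set up to reach every algebra. Both base cases have one source and one sink on the cycle. None of the moves you list (a tree arrow, an arrow on the cycle, a relation) is described so as to create new sources and sinks on the cycle.

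The missing idea is the paper's reduction, which replaces the induction by one direct count.
\begin{enumerate}
\item Deleting a leaf of $Q$ deletes a leaf of $\Gamma_A$ joining consecutive marked points. It changes neither the winding numbers nor $d_\pm$.
\item Pass to $eAe$, where $e$ is the sum of the idempotents at the cycle vertices that are not the middle vertex of a nonzero path $\beta\alpha$. This contracts each such $\beta\alpha$ into a single arrow of degree $|\alpha|+|\beta|$ with the same orientation, so $d_\pm$ is unchanged. In $\Gamma_A$ it removes the leaf at the marked point $e_i$, which joins the two boundary components. It also merges the two polygons on either side of that leaf, and their winding numbers add.
\end{enumerate}
After these reductions every arrow is a maximal path and $\Gamma_A$ is a bare cycle. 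The marked points on $B_1$ (resp.\ $B_2$) are the clockwise (resp.\ counterclockwise) arrows. The bigons on $B_i$ are exactly the relations of that orientation, and the remaining polygons on $B_1$ have the counterclockwise arrows as corners. This gives $\eta_A(B_1)=\sum_j|\beta_j|-r_2+r_1-\sum_i|\alpha_i|$ for all orientation patterns and all numbers of relations at once, with no ``add a relation'' move. The infinite-dimensional case is checked directly at the puncture, where $d_-=0$ and $d_+=\eta_A(B_i)$.
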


\begin{proof}
    Note that conditions (2) and (3) are equivalent directly by Theorem~\ref{thm:classify-graded-gentle}. 
    
     We now prove the equivalence of conditions (1) and (2). Suppose first that $A$ is infinite-dimensional. Since the arrows around the cycle correspond to the arrows around the puncture $B_i$, we have $d_-=0$ and $d_+=\eta_A(B_i)$. Therefore, in this case, conditions $(1)$ and $(2)$ are equivalent. 
     
     It remains to consider the case where $A$ is finite-dimensional.  In fact, it suffices to consider the case where the quiver of $A$ is of type $\widetilde{A}$. Indeed, on the one hand, the ``tree part'' does not affect the graded clock condition. On the other hand, if the quiver of $A$ contains a leaf, namely, an arrow which is the unique arrow starting at some source vertex or the unique arrow ending at some sink vertex, then it corresponds to a leaf in the associated ribbon graph. Such a leaf connects two consecutive marked points on a boundary component. By Definition \ref{dfn:winding-number}, removing this leaf from the ribbon graph does not change the winding numbers of the boundary components.

    Furthermore, we may assume that $e=e_1+\cdots+e_\ell$, where each idempotent $e_i$ corresponds to a vertex $i$ lying on the cycle of $Q$, and that every nonzero path $\beta\alpha\notin I$ satisfies $t(\alpha)=s(\beta)\neq i$ for all such vertices $i$. Indeed, it suffices to consider the graded gentle one-cycle algebra $eAe$. On the one hand, the graded clock condition depends only on the degrees of such nonzero paths, rather than on the degrees of the individual arrows. On the other hand, such a nonzero path  $\beta\alpha\notin I$ corresponds to a leaf in the associated ribbon graph connecting marked points on two distinct boundary components. By Definition \ref{dfn:winding-number}, deleting such a leaf also does not change the winding numbers of the boundary components.

    Finally, by the above reductions, we may assume that $A$ is a gentle one-cycle algebra whose quiver is of type $\widetilde{A}$, and that every composable path of length two belongs to the ideal $I$. In this case, the ribbon graph associated with $A$ is also a cycle without leaves. Let $\alpha_1,\ldots,\alpha_{k_1}$ be the clockwise arrows, and let $\beta_1,\ldots,\beta_{k_2}$ be the counterclockwise arrows. In the surface model, the arrows $\alpha_i$ correspond to arrows around the boundary component $B_1$, while the arrows $\beta_j$ correspond to arrows around the boundary component $B_2$. Let $r_1$ (resp. $r_2$) be the number of bigons connecting two consecutive marked points on $B_1$ (resp. $B_2$). Note that $r_1$ (resp. $r_2$) is precisely the number of clockwise (resp. counterclockwise) oriented relations. Therefore, we obtain
$$
\eta_A(B_1)=-\eta_A(B_2)=
|\beta_1|+\cdots+|\beta_{k_2}|-r_2+r_1-|\alpha_1|-\cdots-|\alpha_{k_1}|
=d_{-}-d_{+}.
$$
Hence, conditions (1) and (2) are equivalent.
\end{proof}

\subsection{Derived-discrete and silting-discrete}

In this subsection, we use the derived equivalence classification to characterize derived-discrete and silting-discrete gentle one-cycle algebras.

Recall that a locally finite connective dg algebra $A$ is called \emph{silting-discrete} if, for every $l \geq 0$, the interval between $A$ and $\Sigma^l A$ in the silting partial order (see \cite{AI12}) contains only finitely many equivalence classes of silting objects. For the precise definition and equivalent characterizations, we refer the reader to~\cite{Aih13,AM17}.

\begin{thm}\label{thm:der-dis-gentle}
    Let $A$ be a locally finite non-positively graded gentle one-cycle algebra, and let $(\mathcal{S}_{A},M_{A},\eta_{A})$ be its associated graded marked annulus. Then the following statements are equivalent:
\begin{enumerate}
\item $A$ is derived-discrete;
\item $A$ is silting-discrete;
\item the grading $\eta_A$ is nonzero;
\item $A$ is not derived equivalent to the path algebra of type $\widetilde{A}$ with the trivial grading;
\item $A$ does not satisfy the graded clock condition.
\end{enumerate}
\end{thm}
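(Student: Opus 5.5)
The plan is to dispose of the formal equivalences first and then handle the two genuinely categorical statements through the standard forms of \cref{cor:standard-form}. Conditions (3) and (5) are equivalent by \cref{prop:gr-clock-cdt} (graded clock condition $\iff$ $\eta_A=0$). For (3)$\iff$(4), a non-positively graded $A$ with $\eta_A=0$ is, again by \cref{prop:gr-clock-cdt}, derived equivalent to the trivially graded path algebra of type $\widetilde A$. The alternative there, the $n$-cycle with radical square zero and one arrow of positive degree, is not non-positively graded. To rule it out we would still check that its graded annulus, when realized by a non-positively graded representative, forces the $\widetilde A$ case; this follows from \cref{thm:classify-graded-gentle}, because within the non-positive standard forms of \cref{cor:standard-form} the value $\eta=0$ occurs only in case (1) with $|\alpha|=0$ and $q\ge0$. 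Conversely, the trivially graded $\widetilde A$ path algebra has $\eta=0$, and $\eta$ is a derived invariant by \cref{thm:classify-graded-gentle}. Since derived-discreteness is a derived invariant (\cref{rmk:dd-pre-under-der}), and silting-discreteness is one as well, it remains to prove (1)$\iff$(3)$\iff$(2) for the representatives of \cref{cor:standard-form}.

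For $\eta_A=0$, the algebra is derived equivalent to $k\widetilde A_{p,q}$ with the trivial grading. This algebra is not derived-discrete: it is tame hereditary, and its homogeneous tubes give a $\mathbb{P}^1$-family of pairwise non-isomorphic modules of the same dimension vector. It is not silting-discrete either, since already two-term silting objects are infinite in number because there are infinitely many preprojective tilting modules. This gives (1)$\Rightarrow$(3) and (2)$\Rightarrow$(3).

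For $\eta_A\neq0$ we go through the cases of \cref{cor:standard-form}. In case (1) with $q\ge0$ and $|\alpha|=0$, the trivially graded algebra would have $\eta\ne 0$ only if it were not of this form, so we must have $|\alpha|<0$ or $q=-1$. When $q=-1$, $Q$ is a cycle whose only possibly nonzero-degree arrow is $\alpha$; local finiteness forces $|\alpha|<0$, and \cref{eg:n-cycle} gives derived-discreteness. When $q\ge0$ and $|\alpha|<0$, I would compute the truncations $\wt{Q}^{[-n,0]}$. These are disjoint unions of trees of type $A$ or $D$, since the negative arrow breaks the unoriented cycle into a ``helix.'' Using \cref{thm:KY}, which applies because $kQ$ is graded hereditary so $\Tot$ is an equivalence, every indecomposable of $\pvd kQ$ with cohomology in a fixed window comes from $\D^b(\grmod_0^{[-n,0]}kQ)$ for some $n$. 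Each of these categories has finitely many indecomposables per dimension vector, which yields discreteness. In case (2), the finite-dimensional algebra with loop $\alpha$, $\alpha^2=0$ and $|\alpha|<0$, the same truncation argument applies with monomial relations, and the resulting truncations are gentle trees, hence of finite representation type. Alternatively, in the finite-dimensional trivially graded situation one can cite \cref{thm:Vossieck}.

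Silting-discreteness for $\eta_A\neq0$ would be obtained from derived-discreteness via the result of \cite{AMY19} that discrete $\pvd$ implies silting-discreteness for connective locally finite dg algebras, or by the surface classification of silting objects. The main obstacle I expect is the discreteness proof when $q\ge0$ and $|\alpha|<0$: showing that bounded cohomological amplitude and a fixed dimension vector bound the relevant truncation window $n$ uniformly, so that finiteness transfers through $\Tot$. To handle this, I would first try the degree bound from \cref{rmk:t-strs} used in the proof of \cref{thm:KY}.
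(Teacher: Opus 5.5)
\paragraph{Overall.} Most of your plan is sound. This includes the formal equivalences (3)$\iff$(4)$\iff$(5), the failure of both discreteness properties for the trivially graded $\widetilde A$ algebra, and the cycle case via \cref{eg:n-cycle}. It also includes the graded hereditary case $q\ge0$, $|\alpha|<0$, where you give a direct truncation argument instead of the paper's citation of \cite[Theorem~A]{Fus26}. In that last case your ``uniform window'' worry does not arise. Every indecomposable of $\D^b_{\grmod_0A}(\Grmod A)$ is a shifted graded module, $\Tot(\Sigma^mM)\simeq\Tot(M(m))$, and $H^j(\Tot M)=M_j$. So $v$ itself fixes the window. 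The truncations there are in fact only of type $A$.

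\paragraph{The gap.} It is case (2) of \cref{cor:standard-form} with $|\alpha|<0$: the finite-dimensional algebra of infinite global dimension with a loop $\alpha$, $\alpha^2=0$. This algebra is not graded hereditary, so \cref{thm:KY} gives only a fully faithful $\Tot$, not essential surjectivity. Objects of $\pvd A$ outside the image of $\Tot$ are not seen by any $\wt{A}^{[-n,0]}$, so the truncation argument does not even reach them. On the image, indecomposables of $\D^b_{\grmod_0A}(\Grmod A)$ are no longer stalk complexes. Representation-finiteness of the gentle-tree truncations is therefore the wrong input: you would need derived-discreteness of the truncations together with a window bound for non-stalk complexes, which is exactly the point you flag but do not supply. \cref{thm:Vossieck} covers only $|\alpha|=0$. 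Since your silting argument passes through derived-discreteness, (3)$\Rightarrow$(2) is also unproved in this case.

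\paragraph{The missing idea.} Avoid analysing $A$ directly. Write $A=(1-e_0)A'(1-e_0)$, where $A'$ has:
\begin{itemize}
\item vertices $0,1,\ldots,p$;
\item arrows $a\colon0\to1$, $b\colon1\to0$ and $1\to2\to\cdots\to p$;
\item the single relation $ba$;
\item non-positive degrees with $|a|+|b|=|\alpha|$.
\end{itemize}
The loop then becomes $ab$, and $(ab)^2=a(ba)b=0$. The algebra $A'$ is a graded gentle one-cycle algebra of finite global dimension with $\eta_{A'}=\eta_A\neq0$. Hence it is derived-discrete by the case you already handled. \cref{prop:idempotent} then transfers derived-discreteness to $A$. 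Silting-discreteness follows either from your derived-to-silting citation or, as in the paper, from \cite[Theorem~1]{AH24}.
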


\begin{proof}
We first prove that, when $A$ is infinite-dimensional, all the above conditions hold. By Corollary~\ref{cor:standard-form} and the locally finiteness of $A$, the algebra $A$ is derived equivalent to $kQ$, where $Q$ is a graded $n$-cycle with a unique arrow of negative degree. Hence (3), (4), and (5) hold immediately, and (1) follows from Example~\ref{eg:n-cycle}. (2) then follows from (1) and \cite[Theorem 2.4]{AM17}, also follows from \cite[Corollary 4.2]{YY21}.

We now consider the case where $A$ is finite-dimensional.
Note that conditions (3) and (4) are equivalent, since $A$ is non-positively graded and by \cref{thm:classify-graded-gentle}. Moreover, conditions (3) and (5) are directly equivalent by Proposition \ref{prop:gr-clock-cdt}.

We now prove that (1) and (4) are equivalent. First, the implication $(1)\Rightarrow(4)$ is clear. Indeed, by \cref{thm:Vossieck}, the path algebra of type $\widetilde{A}$ endowed with the trivial grading is not derived-discrete. Conversely, assume that (4) holds. If $A$ has finite global dimension, then $A$ is derived-discrete by Corollary~\ref{cor:standard-form} and \cite[Theorem~A]{Fus26}.
It remains to consider the case where $A$ has infinite global dimension. Without loss of generality, we may assume that $A$ is the representative algebra up to derived equivalence given in Corollary~\ref{cor:standard-form}. Indeed, $A$ can be viewed as a subalgebra of $A'=kQ/I$, where the quiver $Q$ is of the form
$$\begin{tikzcd}
0 \arrow[r, "\alpha", bend left, shift left] & 1 \arrow[r] \arrow[l, "\beta", bend left, shift left] & 2 \arrow[r] & \cdots \arrow[r] & p
\end{tikzcd}$$
and the ideal is $I=\langle \beta\alpha\rangle$. At this point, $A'$ is also a graded gentle one-cycle algebra and the grading $\eta_{A'}=\eta_A\neq 0$. Moreover, the algebra $A'$ has finite global dimension.  Hence, by the result proved above, $A'$ is derived-discrete. Moreover, since $A=(1-e_0)A'(1-e_0)$, by Proposition \ref{prop:idempotent}, $A$ is also derived-discrete.

Finally, we prove that (2) and (3) are equivalent. If $A$ has finite global dimension, then this follows directly from \cite[Theorem~1.2]{CJSW25}; it also follows from Corollary \ref{cor:standard-form} and \cite[Theorem~A]{Fus26}. Now assume that $A$ has infinite global dimension. Without loss of generality, we also assume that $A$ is the representative algebra up to derived equivalence given in Corollary~\ref{cor:standard-form} and consider the algebra $A'$ constructed above. By the preceding argument, $A'$ is silting-discrete. Then by \cite[Theorem 1]{AH24}, $A$ is silting-discrete.
\end{proof}

\section{Semi-orthogonal decompositions}\label{sec:3}

By Proposition \ref{prop:DD-DD}, every derived-discrete locally finite non-positively graded algebra $A$ gives rise to a family of derived-discrete finite-dimensional algebras $\widetilde{A}^{[-n,0]}$ satisfying the characterization in \cref{thm:Vossieck}. Therefore, studying the shape of the derived categories of these algebras provides a way to identify the possible algebras $A$.

In this section, we use semi-orthogonal decompositions to characterize the derived category of a path algebra of type $D$.

\begin{dfn}\label{dfn:SOD}
	Let $\T$ be a triangulated category. A \emph{semi-orthogonal decomposition} of $\T$ is a pair of thick subcategories $\T_1, \T_2 \subseteq \T$ such that:
	\begin{enumerate}
		\item $\Hom_{\T}(\T_1, \T_2) = 0$,
		\item $\T$ is generated by $\T_1$ and $\T_2$, i.e., $\T = \T_1 \ast \T_2$.
	\end{enumerate}
	We write $\T = \T_1 \bot \T_2$. Note that conditions (1) and (2) force $\T_2 = \T_1^\bot$.
\end{dfn}

This section aims to prove the following theorem, which will be used in the proof of \cref{thm:main}.

\begin{thm}\label{thm:SOD}
	Let $\D^b(\mod kD_n) = \T_1 \bot \T_2$ be a semi-orthogonal decomposition. Then at least one of $\T_1, \T_2$ has no direct summand equivalent to $\D^b(\mod kD_l)$ for any $l \ge 4$.
\end{thm}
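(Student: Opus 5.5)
The plan is to translate the semi-orthogonal decomposition into a statement about noncrossing partitions of type $D_n$ and then argue combinatorially. Since $kD_n$ is hereditary of Dynkin type, every admissible thick subcategory $\T_1$ of $\D^b(\mod kD_n)$ is generated by an exceptional sequence. Up to applying a power of the shift to each component, $\T_1$ is then of the form $\D^b(\W)$ for a wide subcategory $\W\subseteq\mod kD_n$. Its right orthogonal $\T_2=\T_1^\bot$ is likewise $\D^b(\W^{\perp})$, where $\W^\perp$ is the perpendicular wide subcategory.

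By the Ingalls--Thomas bijection \cite{IT09}, wide subcategories of $\mod kD_n$ correspond to noncrossing partitions $w\le_T c$ in the absolute order of the Coxeter group $W(D_n)$, with $c$ the Coxeter element determined by the orientation. Under this bijection:
\begin{itemize}
\item passing to the perpendicular category corresponds to the Kreweras complement $w\mapsto w^{-1}c$;
\item the Dynkin type of $\W$, i.e.\ the decomposition of $\T_1$ into indecomposable summands, is read off from the parabolic type of $w$, i.e.\ from the type of its parabolic subgroup.
\end{itemize}
I will use the Athanasiadis--Reiner model \cite{AR04} of type $D_n$ noncrossing partitions as symmetric partitions of $\{\pm1,\dots,\pm(n-1)\}$ drawn on a polygon with an extra central point $\pm n$. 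In this model, a parabolic factor of type $D_l$ with $l\ge 4$ (indeed, any $D$-type factor) occurs exactly when the partition has a zero block, i.e.\ a symmetric block containing the centre. Such a block has size $2l$ when it contributes $D_l$, and $l\ge 4$ rules out the degenerate $D_2=A_1^2$ and $D_3=A_3$ cases. So the first step is to establish the dictionary: $\T_1$ has a summand $\D^b(\mod kD_l)$ with $l\ge4$ if and only if the noncrossing partition $\pi$ of $w$ has a zero block with at least $4$ pairs. Here I would rely on the fact that the cycle type of $w$ in the AR model determines its parabolic type.

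The second step is a combinatorial lemma on Kreweras complements. If $\pi$ has a zero block $Z$ with $|Z|\ge 8$ (i.e.\ $l\ge4$), then the Kreweras complement $K(\pi)$ has no zero block of size $\ge 8$. The heuristic is that the zero block of $\pi$ passes through the centre, and $K(\pi)$, being noncrossing with respect to $\pi$ in the interleaved picture, can meet the centre only in a way that is squeezed between consecutive elements of $Z$. Concretely, I would use the known description of $K$ in the AR model: a zero block of $\pi$ with $\ge 2$ pairs forces every block of $K(\pi)$ to live in a sector between two consecutive elements of $Z$, so $K(\pi)$ has no zero block at all. The edge cases, where $Z$ is small or uses the central element $\pm n$ specially, give at most a zero block of size $\le 4$ (type $D_2$ or $D_3$), which is harmless. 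Combining this lemma with the first step gives the theorem, and symmetric roles of $\T_1,\T_2$ are handled since one factor's partition is the Kreweras complement of the other's.

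The main obstacle I expect is getting the dictionary and the Kreweras lemma exactly right in the AR model. The type $D$ Kreweras map is not simply the type $B$ one: the central point $\pm n$ behaves differently, and one must check carefully that a zero block of the complement cannot arise in sectors adjacent to the centre. I would first verify the claim by computing small cases ($n=4,5$, with $\pi$ having a zero block of size $8$) and then argue by an explicit description of $K$ as rotation composed with the dual noncrossing partition on the doubled polygon.
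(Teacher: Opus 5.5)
Your proposal is correct and follows the paper's argument. You pass to a wide subcategory $\W$ and its perpendicular $\W^{\bot_{0,1}}$, use Ingalls--Thomas so that perpendicularity becomes the Kreweras complement, match $D_l$-components ($l\ge 4$) with zero blocks of size $2l$ in the Athanasiadis--Reiner model, and conclude from the lemma that if $\pi$ has a zero block then $\Kr(\pi)$ has none. Your hedging about ``edge cases'' is unnecessary: every noncrossing zero block contains $\pm n$, so if both $w$ and $w^{-1}c$ had zero blocks then $e_n$ would lie in both $\operatorname{Im}(w-1)$ and $\operatorname{Im}(w^{-1}c-1)$, whereas $\ell(w)+\ell(w^{-1}c)=n$ and Carter's lemma force these spaces to intersect trivially (also, a $D_3$ zero block has size $6$, not $\le 4$).
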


\begin{dfn}\label{dfn:wide}
	Let $\A$ be an Abelian category. A subcategory $\W \subseteq \A$ is called a \emph{wide subcategory} if $\W$ is closed under taking kernels, cokernels, and extensions.
\end{dfn}

For a hereditary abelian category $\A$, we have $\D^b(\A) = \add_{i \in \ZZ} \Sigma^i \A$. Thus, we have a bijection
\[
	\begin{array}{ccc}
		\{\text{thick subcategories of } \D^b(\A)\} & \longleftrightarrow & \{\text{wide subcategories of } \A\} \\[4pt]
		\T & \longmapsto & \T \cap \A \\[4pt]
		\thick\W & \longmapsfrom & \W
	\end{array}
\]
Since $(\thick\W)^\bot \cap \A = \W^{\bot_{0,1}}$, we have $(\thick\W)^\bot = \thick(\W^{\bot_{0,1}})$.

\begin{rmk}\label{rmk:wide-derived}
	Let $\A$ be a hereditary abelian category, and $\W \subseteq \A$ be a wide subcategory. Then $\W$ is also hereditary, and we have an equivalence $\D^b(\W) \simeq \thick_{\D^b(\A)} \W$. Indeed, since $\W$ is closed under extensions, $\Ext^1_\W \simeq \Ext^1_\A|_\W$, and the latter is right exact.
\end{rmk}

\begin{rmk}\label{rmk:wide-Morita}
	Let $Q$ be a quiver whose underlying graph is a disjoint union of Dynkin graphs. Since $\mod kQ$ is hereditary of finite representation type, every wide subcategory $\W$ of $\mod kQ$ is hereditary and admits an additive generator. By \cite[Theorem C]{CSPP22}, $\W$ therefore has a progenerator, and Morita theory gives an equivalence $\W \simeq \mod kQ_\W$ for some quiver $Q_\W$ whose underlying graph is a disjoint union of Dynkin graphs.
\end{rmk}

By Remarks \ref{rmk:wide-derived} and \ref{rmk:wide-Morita}, to prove \cref{thm:SOD} it suffices to establish the following proposition.

\begin{prop}\label{prop:typeD}
	Let $\W \simeq \mod kQ_\W \subseteq \mod kD_n$ be a wide subcategory. If $Q_\W$ has a connected component of type $D_k$ for some $k \ge 4$, then $Q_{\W^{\bot_{0,1}}}$ has no connected component of type $D_l$ for any $l \ge 4$.
\end{prop}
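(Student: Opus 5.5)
We prove \cref{prop:typeD} through the Ingalls--Thomas bijection \cite{IT09}. It identifies wide subcategories of $\mod kD_n$ with noncrossing partitions of type $D_n$, that is, with elements of the interval $[1,c]$ in the absolute order of the Coxeter group $W(D_n)$, where $c$ is the Coxeter element attached to the orientation. Under this identification a wide subcategory $\W$ corresponds to an element $w_\W\le c$, and its perpendicular category $\W^{\bot_{0,1}}$ corresponds to the Kreweras complement $w_\W^{-1}c$ (up to the standard left/right convention). Moreover, the Dynkin type of $Q_\W$ is the type of the parabolic subgroup generated by the reflections below $w_\W$, i.e.\ the type of its cycle decomposition as a signed permutation.

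The plan is to use the Athanasiadis--Reiner model \cite{AR04}: noncrossing partitions of type $D_n$ are drawn on $2(n-1)$ points on a circle together with one central point. A block containing the central point (equivalently, a pair $\pm$ of antipodal blocks together with $\pm n$) is the \emph{zero block}. In this model a connected component of type $D_k$ with $k\ge4$ occurs exactly when the partition has a zero block of size $k$, which is $\ge 4$. The remaining nonzero blocks, which come in antipodal pairs, contribute only components of type $A$. The degenerate $D_2$ and $D_3$ zero blocks give types $A_1\times A_1$ and $A_3$, which is why the hypothesis $k\ge4$ appears. The first step is therefore to check this dictionary between components of $Q_\W$ and blocks, including the small-rank degenerations.

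The second step is the combinatorial lemma: if $\pi$ has a zero block with at least two pairs of antipodal nonzero points (so that it yields $D_k$ with $k\ge4$), then its Kreweras complement $K(\pi)$ has no zero block of that kind. I expect the proof to go as follows. A zero block of $\pi$ containing several antipodal pairs separates the circle into arcs. In the interleaved picture defining $K(\pi)$, which places the complement points between consecutive points of $\pi$, any block of $K(\pi)$ must lie inside one of these arcs in a centrally symmetric way. Hence no block of $K(\pi)$ can contain antipodal points or the centre, except possibly a degenerate one coming from a zero block of size at most $3$.

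I expect the main obstacle to be getting this lemma exactly right in the Athanasiadis--Reiner model. The centre and the degenerate $D_2$/$D_3$ cases behave differently, and the Kreweras complement in type $D$ is not simply the type $B$ one restricted. If the pictorial argument becomes too delicate, a fallback is to argue by ranks. Components of type $D$ in both $w$ and $w^{-1}c$ would require both elements to fail to lie in any type-$A$ parabolic, and a reflection-length count $\ell(w)+\ell(w^{-1}c)=n$ combined with how $c$ acts on the coordinate $n$ should then give a contradiction. Once the lemma is established, \cref{prop:typeD} follows by translating back through the Ingalls--Thomas bijection.
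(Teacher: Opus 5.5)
Your overall reduction matches the paper's. Via Ingalls--Thomas and Athanasiadis--Reiner, a $D_k$-component ($k\ge4$) of $Q_\W$ forces a zero block in the partition $\pi$ with $\W_\pi=\W$ (Proposition~\ref{prop:str}). Your derivation of this dictionary from the type of the parabolic subgroup of reflections below $w$ is fine. Then $\W^{\bot_{0,1}}=\W_{\Kr(\pi)}$, and everything rests on showing that $\Kr(\pi)$ has no zero block (Lemma~\ref{lem:Kreweras complement}).

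That lemma is the only step with real content, and your proposal does not prove it. The pictorial sketch also contains false statements:
\begin{itemize}
\item A zero block is a block fixed by negation, not a block containing the centre. Since $\pm n$ sit at the centroid, every $D_n$-partition has a block through the centre; for example $\{1,2,4\}$ is a nonzero block in $\NC^D(4)$.
\item For the same reason, ``no block of $\Kr(\pi)$ can contain the centre'' is false. For $\pi=\{1,-1,4,-4\},\{2\},\{-2\},\{3\},\{-3\}$ one has $\Kr(\pi)=\{1,2,3\},\{-1,-2,-3\},\{4\},\{-4\}$, and $\{4\}$ is at the centre. What must be shown is only that $\Kr(\pi)$ has no block fixed by negation. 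The centre is exactly where the interleaving picture is delicate in type $D$, because the points $\pm n$ are not moved by the rotation.
\item A minor count: a zero block of size $2l$ gives $D_l$ and always contains $\pm n$. So $D_k$ with $k\ge4$ needs three further antipodal pairs, not two.
\end{itemize}
Your fallback does not close the gap by itself either. With $\ell(u)=n-\#\{\text{paired cycles of }u\}$, the identity $\ell(w)+\ell(w^{-1}c)=n$ is numerically compatible with both $w$ and $w^{-1}c$ having zero blocks.

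The missing ingredient is the moved-space decomposition. Let $V=\mathbb{R}^n$ be the reflection representation of $W$, acting by signed permutations of $e_1,\dots,e_n$ with $e_{-i}=-e_i$. Put $\operatorname{Mov}(u):=\operatorname{im}(u-1)$; by Carter's lemma, $\ell(u)=\dim\operatorname{Mov}(u)$. The steps are:
\begin{enumerate}
\item From $c-1=(w-1)+w(w^{-1}c-1)$ and $wy=y+(w-1)y$, we get $\operatorname{Mov}(c)\subseteq\operatorname{Mov}(w)+\operatorname{Mov}(w^{-1}c)$.
\item For $w\le c$ the dimensions add up, and a Coxeter element has no nonzero fixed vector. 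Hence $V=\operatorname{Mov}(w)\oplus\operatorname{Mov}(w^{-1}c)$.
\item If $\pi$ has a zero block, then by Theorem~\ref{thm:AR-iso} the element $w$ contains the balanced cycle $[n]$. So $we_n=-e_n$ and $e_n\in\operatorname{Mov}(w)$.
\item If $\Kr(\pi)$ also had a zero block, the same argument would put $e_n$ in $\operatorname{Mov}(w^{-1}c)$, contradicting the directness of the sum.
\end{enumerate}
Hence $\Kr(\pi)$ has no zero block at all; the degenerate exceptions you allowed for never occur. Proposition~\ref{prop:str} then shows that every component of $Q_{\W^{\bot_{0,1}}}$ is of type $A$. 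This gives a rigorous, picture-free proof of the step the paper justifies only by appeal to the rotation description in Lemma~\ref{lem:Kreweras complement}.
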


We give a proof of Proposition \ref{prop:typeD} at the end of \cref{subsection:NP type D}. Granting it, we can prove \cref{thm:SOD}.

\begin{proof}[Proof of \cref{thm:SOD}]
	Write $\W_1 := \T_1 \cap \mod kD_n$, so that $\T_1 = \thick \W_1$ and, since $\T_2 = \T_1^\bot$, also $\T_2 = \thick(\W_1^{\bot_{0,1}})$. By Remarks \ref{rmk:wide-derived} and \ref{rmk:wide-Morita}, $\T_1$ (resp.\ $\T_2$) admits $\D^b(\mod kD_l)$ as a direct summand if and only if $Q_{\W_1}$ (resp.\ $Q_{\W_1^{\bot_{0,1}}}$) has a connected component of type $D_l$. By Proposition \ref{prop:typeD}, these two quivers cannot both have a connected component of type $D_l$ for some $l \ge 4$, so at least one of $\T_1, \T_2$ has no such summand.
\end{proof}

\subsection{Noncrossing partitions and wide subcategories}

In this subsection, we recall the results in \cite{IT09}.

Let $Q$ be a Dynkin quiver. The \emph{Euler form} $\la-,-\ra$ on $K_0(\mod kQ)$ is defined by
\begin{align*}
    \la[M],[N]\ra:=\dim\Hom_{kQ}(M,N)-\dim\Ext^1_{kQ}(M,N).
\end{align*}
We put $([M],[N]):=\la [M],[N]\ra+\la[N],[M]\ra$. For $M\in\ind\left(\mod kQ\right)$, we put
\begin{align*}
    s_{[M]}([N]):=[N]-([M],[N])[M].
\end{align*}
The \emph{Weyl group} $W=W_Q$ of $Q$ is a subgroup of $\Aut(K_0(\mod kQ))$ generated by $\{s_{[M]}\mid M\in\ind\mod kQ\}$. For $M$ indecomposable, $s_{[M]}$ is called a \emph{reflection}.

\begin{dfn}[Absolute order]
    For $w\in W$, let $\l(w)$ be the length of the shortest expression for $w$ as a product of reflections. For $u,v\in W$, let $u\le v$ if
    \begin{align*}
        \l(v)=\l(u)+\l(u^{-1}v)
    \end{align*}
    holds.
\end{dfn}

\begin{thm}\label{thm:IT}\cite[Theorem 1.1, Theorem 3.13]{IT09}
    Let $Q$ be a Dynkin quiver, and $W$ be its Weyl group.For a wide subcategory $\W$, let $M_1,\ldots,M_r$ be representatives of the isomorphism classes of simple objects in $\W$, labeled so that $\Ext^1(M_i,M_j)=0$ whenever $i<j$. We put $\cox(\W):=s_{M_1}\cdots s_{M_r}\in W$. Then we obtain a poset isomorphism
    \[
    \begin{array}{ccc}
        \wide(\mod kQ):=\{\W\subseteq\mod kQ\mid \W \text{ is a wide subcategory}\} & \longleftrightarrow & \NC_Q:=\{w\in W\mid e\le w\le c\} \\[4pt]
        \W & \longmapsto & \cox(\W) \\[4pt]
        \W_w:=\add\{M\in\ind\mod kQ\mid s_M\le w \} & \longmapsfrom & w
    \end{array}
    \]
    where $c=c_Q:=\cox(\mod kQ)$ is the \emph{Coxeter element} of $Q$. In addition, $\cox(\W)\cox(\W^{\bot_{0,1}})=c$ holds for every wide subcategory $\W$. 

    Here, the elements of $\NC_Q$ are called \emph{noncrossing partitions} of $W$.
\end{thm}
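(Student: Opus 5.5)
Since this is quoted from \cite{IT09}, the plan is to reconstruct their argument from exceptional sequences rather than to prove anything new. First I will show that $\cox(\W)$ is well defined. A wide subcategory $\W$ of $\mod kQ$ is equivalent to $\mod kQ_\W$ by Remark~\ref{rmk:wide-Morita}, and its simple objects $M_1,\dots,M_r$ can be ordered so that $\Ext^1(M_i,M_j)=0$ for $i<j$. With this ordering they form an exceptional sequence in $\mod kQ$. Any two admissible orderings differ by swapping adjacent pairs that are mutually $\Hom$- and $\Ext$-orthogonal. For such a pair $([M_i],[M_j])=0$, so the reflections $s_{M_i}$ and $s_{M_j}$ commute. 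Hence the product $s_{M_1}\cdots s_{M_r}$ does not depend on the ordering.

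Next I will show that $\cox(\W)\cox(\W^{\bot_{0,1}})=c$ and that $\cox(\W)\in\NC_Q$. Concatenating an exceptional sequence of simples of $\W$ with a complete exceptional sequence of $\W^{\bot_{0,1}}$ gives a complete exceptional sequence of $\mod kQ$ of length $n$; here I use that $\W^{\bot_{0,1}}$ is wide and of rank $n-r$. By the braid group action on complete exceptional sequences (Crawley-Boevey, Ringel), the product of the reflections along such a sequence is invariant under mutation. Its value is therefore the product along the sequence of simples of $kQ$, which is $c$. This gives the factorization, and since $\ell(c)=n$ it is a reduced reflection factorization. Consequently $\ell(\cox(\W))=r$ and $\cox(\W)\le c$, i.e.\ $\cox(\W)\in\NC_Q$.

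For bijectivity I will use Bessis' Hurwitz transitivity: the braid group acts transitively on reduced reflection factorizations of $c$. Together with the correspondence between mutation of exceptional sequences and Hurwitz moves, this shows that every reduced reflection factorization of $c$ comes from a complete exceptional sequence. Given $w\le c$, extend a reduced factorization of $w$ to one of $c$. This yields an exceptional sequence $(E_1,\dots,E_r,\dots)$ with $w=s_{E_1}\cdots s_{E_r}$. The wide subcategory generated by $E_1,\dots,E_r$ then maps to $w$, which gives surjectivity.

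The step I expect to be the main obstacle is injectivity, namely that $\W=\W_{\cox(\W)}$. The inclusion $\W\subseteq\W_w$ is clear: every indecomposable of $\W$ is a root of $\W$, and its reflection divides $\cox(\W)$ because this holds in the smaller Dynkin Weyl group of $Q_\W$. For the reverse inclusion, let $M$ be indecomposable with $s_M\le w$. Then $[M]$ lies in the fixed-space complement $\operatorname{im}(w-1)$, which is spanned by the classes $[M_i]$. I then need to show that a real Schur root lying in $\operatorname{span}\{[M_i]\}$ is the class of a module in $\W$. I would argue this with the perpendicular calculus: $M$ lies in $(\W^{\bot_{0,1}})^{\perp}$, because $\langle [M], [X]\rangle =0$ for all $X\in\W^{\bot_{0,1}}$ combined with hereditary vanishing arguments, and that perpendicular category is $\W$. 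The order-preservation claim follows from the same characterization of $\W_w$.
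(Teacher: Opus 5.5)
Your outline matches the Ingalls--Thomas argument that the paper cites without proof: well-definedness via commuting reflections, the factorization $c=\cox(\W)\cox(\W^{\bot_{0,1}})$ via concatenated exceptional sequences and braid invariance, and surjectivity via Bessis' Hurwitz transitivity. Those steps are fine. There is, however, a gap in the step you flagged yourself.

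\textbf{The gap in injectivity.} From $[M]\in\operatorname{span}\{[M_i]\}$ you correctly get $\langle [M],[X]\rangle=0$ for every $X\in\W^{\bot_{0,1}}$. But this only says $\dim\Hom(M,X)=\dim\Ext^1(M,X)$, and no general hereditary argument upgrades it to $\Hom(M,X)=0=\Ext^1(M,X)$. This already fails for tame quivers: take a quasi-simple $E$ in a tube of rank $2$ and the length-two regular module with quasi-socle $E$; then $\Hom\neq0\neq\Ext^1$.

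The missing input is that $Q$ is Dynkin, so $kQ$ is representation-directed. Suppose $M,X$ are indecomposable with $\Hom(M,X)\neq0$ and $\Ext^1(M,X)\cong D\Hom(X,\tau M)\neq0$. Then $X\not\cong M$, since $M$ is exceptional. The Auslander--Reiten sequence ending at $M$ gives a non-isomorphism $\tau M\to E\to M$, so you obtain a cycle $M\to X\to\tau M\to E\to M$ of nonzero maps between indecomposables containing non-isomorphisms. Such a cycle is impossible. Applying this to every indecomposable $X\in\W^{\bot_{0,1}}$ puts $M$ in the \textit{left} perpendicular ${}^{\bot_{0,1}}(\W^{\bot_{0,1}})$, which equals $\W$ by perpendicular calculus. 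Note that in the paper's notation $(\W^{\bot_{0,1}})^{\bot}$ is a right perpendicular, and that is not $\W$ in general.

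An alternative fix is a lattice argument. The $[M_i]$ extend to a $\ZZ$-basis, so $[M]$ lies in their $\ZZ$-span. The embedding $K_0(\W)\to K_0(\mod kQ)$ is an isometry for the Euler forms, so $[M]$ is a vector of Tits norm $1$ in the lattice of $Q_\W$. Hence it is a root of $Q_\W$, necessarily positive, so $[M]=[N]$ for some indecomposable $N\in\W$, and Gabriel's theorem gives $M\cong N$.

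\textbf{Order-preservation.} The characterization $\W_w=\add\{M\mid s_M\le w\}$ gives only one direction, $u\le w\Rightarrow \W_u\subseteq\W_w$, by transitivity. For the converse $\W\subseteq\W'\Rightarrow\cox(\W)\le\cox(\W')$ you need more. One way: rerun your factorization step inside $\W'\simeq\mod kQ_{\W'}$, in which $\W$ is wide, to get $\cox(\W)\le\cox(\W')$ in the Weyl group of $Q_{\W'}$. Then use Carter's formula $\ell(v)=\dim\operatorname{im}(v-1)$ to see that absolute length on this reflection subgroup is the restriction of absolute length on $W$. The same formula also justifies your unexplained implication $s_M\le w\Rightarrow [M]\in\operatorname{im}(w-1)$.
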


\begin{dfn}
    For a noncrossing partition $w\in\NC_Q$, we call $\Kr(w):=w^{-1}c$ the \emph{Kreweras complement} of $w$.
\end{dfn}

The last statement of \cref{thm:IT} says that $\W_w^{\bot_{0,1}}=\W_{\Kr(w)}$.

\subsection{Noncrossing partition of type $D_n$}\label{subsection:NP type D}

Let $\overrightarrow{D}_n$ be the following quiver:
\begin{center}
    \begin{tikzcd}[row sep = 2em, column sep = 1.8em]
        & & & & & {n-1} \\
        1 \arrow[r]
        & 2 \arrow[r]
        & \cdots \arrow[r]
        & {n-3} \arrow[r]
        & {n-2} \arrow[ur] \arrow[dr]
        & \\
        & & & & & {n}
    \end{tikzcd}
\end{center}

The Weyl group $W=W_{\ora{D}_n}$ is realized as a subgroup of $S_{[\pm n]}$, the symmetric group on $[\pm n]:=\{1,\ldots,n,-1,\ldots,-n\}$. For $i\neq -j$, define the paired transposition
\[
    ((i,j)):=(i,j)(-i,-j)\in S_{[\pm n]},
\]
which simultaneously swaps $i\leftrightarrow j$ and $-i\leftrightarrow -j$. The simple reflections are identified as
\[
    s_i=((i,\,i+1))\quad(1\le i\le n-1),
    \qquad
    s_n=((-(n-1),\,n)),
\]
so $W$ is the group of even signed permutations: those $\sigma\in S_{[\pm n]}$ with $\sigma(-i)=-\sigma(i)$ for all $i$ and an even number of sign changes among $\sigma(1),\ldots,\sigma(n)$.

Every $w\in W$ decomposes uniquely into disjoint cycles of two types:
\begin{itemize}
    \item \emph{Paired cycle}: $((i_1,\ldots,i_k)):=c\bar c$, where
          $c=(i_1,\ldots,i_k)$ and $\bar c=(-i_1,\ldots,-i_k)$ are disjoint.
    \item \emph{Balanced cycle}: $[i_1,\ldots,i_k]:=(i_1,\ldots,i_k,-i_1,\ldots,-i_k)$.
\end{itemize}
The Coxeter element $c=\cox(\mod k\ora{D}_n)$ is
\begin{align*}
    c = s_1\cdots s_{n-1}s_n
      = ((1,2))\cdots((n-1,n))\cdot((-(n-1),n))
      = [1,\ldots,n-1]\,[n].
\end{align*}

The cycle structure of $w\in W$ naturally encodes a partition of $[\pm n]$:
\begin{itemize}
    \item each paired cycle $((i_1,\ldots,i_k))$ yields a pair of blocks $\{i_1,\ldots,i_k\}$ and $\{-i_1,\ldots,-i_k\}$,
    \item each balanced cycle $[i_1,\ldots,i_k]$ yields a single block $\{i_1,\ldots,i_k,-i_1,\ldots,-i_k\}$ fixed by negation.
\end{itemize}
This motivates the following definition.

\begin{dfn}
    A \emph{$D_n$-partition} is a partition $\pi$ of $[\pm n]$ satisfying:
    \begin{itemize}
        \item $B\in\pi \Rightarrow -B\in\pi$,
        \item at most one block is fixed by negation (\emph{zero block}),
        \item if a zero block is present, it contains more than two elements.
    \end{itemize}
    Let $\Pi^D(n)$ denote the set of all $D_n$-partitions.
\end{dfn}

Athanasiadis--Reiner \cite{AR04} gives a visual noncrossing condition for $D_n$-partitions. Label the vertices of a regular $(2n-2)$-gon clockwise by
$1,2,\ldots,n-1,-1,\ldots,-(n-1)$, and place both $n$ and $-n$ at the centroid. For a block $B$, let $\rho(B)$ be the convex hull of the points labeled by $B$. Two distinct blocks $B,B'$ \emph{cross} if $\rho(B)\ne\rho(B')$ and one contains a point of the other in its relative interior.

\begin{dfn}
    Let $\NC^D(n)\subseteq\Pi^D(n)$ be the subposet consisting of noncrossing
    $D_n$-partitions.
\end{dfn}

\begin{eg}
	We give some examples of $D_4$-noncrossing partitions:
\begin{center}
\begin{minipage}{0.30\textwidth}\centering
	\begin{tikzpicture}[baseline=(vc.base)]
		\def\R{1.35cm}
		\coordinate (v1) at (90:\R);
		\coordinate (v2) at (30:\R);
		\coordinate (v3) at (-30:\R);
		\coordinate (v4) at (-90:\R);
		\coordinate (v5) at (-150:\R);
		\coordinate (v6) at (150:\R);
		\draw[gray!40,thin] (v1)--(v2)--(v3)--(v4)--(v5)--(v6)--cycle;
		\node[font=\tiny] at ($(v1)+(0,0.28)$)      {$1$};
		\node[font=\tiny] at ($(v2)+(0.26,0.15)$)   {$2$};
		\node[font=\tiny] at ($(v3)+(0.26,-0.15)$)  {$3$};
		\node[font=\tiny] at ($(v4)+(0,-0.28)$)      {$-1$};
		\node[font=\tiny] at ($(v5)+(-0.26,-0.15)$) {$-2$};
		\node[font=\tiny] at ($(v6)+(-0.26,0.15)$)  {$-3$};
		\node[font=\tiny,black] at (0.22,-0.21)   {$\pm4$};
		\foreach \v in {v1,v2,v3,v4,v5,v6}{
			\node[circle,draw=black,fill=white,inner sep=0pt,
				minimum size=5pt,line width=0.85pt] at (\v) {};}
		\node[circle,draw=black!75,fill=white,inner sep=0pt,
			minimum size=5pt,line width=0.85pt] (vc) at (0,0) {};
	\end{tikzpicture}
	\par\smallskip{\footnotesize $\{1\},\{-1\},\ldots,\{4\},\{-4\}$ (min)}
\end{minipage}%
%
\begin{minipage}{0.30\textwidth}\centering
	\begin{tikzpicture}[baseline=(vc.base)]
		\def\R{1.35cm}
		\coordinate (v1) at (90:\R);
		\coordinate (v2) at (30:\R);
		\coordinate (v3) at (-30:\R);
		\coordinate (v4) at (-90:\R);
		\coordinate (v5) at (-150:\R);
		\coordinate (v6) at (150:\R);
		\draw[gray!40,thin] (v1)--(v2)--(v3)--(v4)--(v5)--(v6)--cycle;
		\fill[gray!25,opacity=0.85] (v1)--(v2)--(v3)--cycle;
		\draw[black!75,line width=1.7pt,line join=round] (v1)--(v2)--(v3)--cycle;
		\fill[gray!25,opacity=0.85] (v4)--(v5)--(v6)--cycle;
		\draw[black!75,line width=1.7pt,line join=round] (v4)--(v5)--(v6)--cycle;
		\node[font=\tiny] at ($(v1)+(0,0.28)$)      {$1$};
		\node[font=\tiny] at ($(v2)+(0.26,0.15)$)   {$2$};
		\node[font=\tiny] at ($(v3)+(0.26,-0.15)$)  {$3$};
		\node[font=\tiny] at ($(v4)+(0,-0.28)$)      {$-1$};
		\node[font=\tiny] at ($(v5)+(-0.26,-0.15)$) {$-2$};
		\node[font=\tiny] at ($(v6)+(-0.26,0.15)$)  {$-3$};
		\node[font=\tiny,black] at (0.22,-0.21)   {$\pm4$};
		\foreach \v in {v1,v2,v3,v4,v5,v6}{
			\node[circle,draw=black,fill=white,inner sep=0pt,
				minimum size=5pt,line width=0.85pt] at (\v) {};}
		\node[circle,draw=black!75,fill=white,inner sep=0pt,
			minimum size=5pt,line width=0.85pt] (vc) at (0,0) {};
	\end{tikzpicture}
	\par\smallskip{\footnotesize $\{1,2,3\},\{-1,-2,-3\},\{4\},\{-4\}$}
\end{minipage}%
%
\begin{minipage}{0.30\textwidth}\centering
	\begin{tikzpicture}[baseline=(vc.base)]
		\def\R{1.35cm}
		\coordinate (v1) at (90:\R);
		\coordinate (v2) at (30:\R);
		\coordinate (v3) at (-30:\R);
		\coordinate (v4) at (-90:\R);
		\coordinate (v5) at (-150:\R);
		\coordinate (v6) at (150:\R);
		\draw[gray!40,thin] (v1)--(v2)--(v3)--(v4)--(v5)--(v6)--cycle;
		\fill[gray!25,opacity=0.85] (0,0)--(v1)--(v2)--cycle;
		\draw[black!75,line width=1.7pt,line join=round] (0,0)--(v1)--(v2)--cycle;
		\fill[gray!25,opacity=0.85] (0,0)--(v4)--(v5)--cycle;
		\draw[black!75,line width=1.7pt,line join=round] (0,0)--(v4)--(v5)--cycle;
		\node[font=\tiny] at ($(v1)+(0,0.28)$)      {$1$};
		\node[font=\tiny] at ($(v2)+(0.26,0.15)$)   {$2$};
		\node[font=\tiny] at ($(v3)+(0.26,-0.15)$)  {$3$};
		\node[font=\tiny] at ($(v4)+(0,-0.28)$)      {$-1$};
		\node[font=\tiny] at ($(v5)+(-0.26,-0.15)$) {$-2$};
		\node[font=\tiny] at ($(v6)+(-0.26,0.15)$)  {$-3$};
		\node[font=\tiny,black] at (0.30, 0.38)   {$+4$};
		\node[font=\tiny,black] at (-0.30,-0.38)  {$-4$};
		\foreach \v in {v1,v2,v3,v4,v5,v6}{
			\node[circle,draw=black,fill=white,inner sep=0pt,
				minimum size=5pt,line width=0.85pt] at (\v) {};}
		\node[circle,draw=black!75,fill=white,inner sep=0pt,
			minimum size=5pt,line width=0.85pt] (vc) at (0,0) {};
	\end{tikzpicture}
	\par\smallskip{\footnotesize $\{1,2,4\},\{-1,-2,-4\},\{3\},\{-3\}$}
\end{minipage}

\medskip
\begin{minipage}{0.30\textwidth}\centering
	\begin{tikzpicture}[baseline=(vc.base)]
		\def\R{1.35cm}
		\coordinate (v1) at (90:\R);
		\coordinate (v2) at (30:\R);
		\coordinate (v3) at (-30:\R);
		\coordinate (v4) at (-90:\R);
		\coordinate (v5) at (-150:\R);
		\coordinate (v6) at (150:\R);
		\draw[gray!40,thin] (v1)--(v2)--(v3)--(v4)--(v5)--(v6)--cycle;
		\draw[black!75,line width=2.0pt,line cap=round] (v1)--(v4);
		\draw[black!75,line width=1.7pt,line cap=round] (v2)--(v3);
		\draw[black!75,line width=1.7pt,line cap=round] (v5)--(v6);
		\node[font=\tiny] at ($(v1)+(0,0.28)$)      {$1$};
		\node[font=\tiny] at ($(v2)+(0.26,0.15)$)   {$2$};
		\node[font=\tiny] at ($(v3)+(0.26,-0.15)$)  {$3$};
		\node[font=\tiny] at ($(v4)+(0,-0.28)$)      {$-1$};
		\node[font=\tiny] at ($(v5)+(-0.26,-0.15)$) {$-2$};
		\node[font=\tiny] at ($(v6)+(-0.26,0.15)$)  {$-3$};
		\node[font=\tiny,black] at (0.22,-0.21)   {$\pm4$};
		\foreach \v in {v1,v2,v3,v4,v5,v6}{
			\node[circle,draw=black,fill=white,inner sep=0pt,
				minimum size=5pt,line width=0.85pt] at (\v) {};}
		\node[circle,draw=black!75,fill=white,inner sep=0pt,
			minimum size=5pt,line width=0.85pt] (vc) at (0,0) {};
	\end{tikzpicture}
	\par\smallskip{\footnotesize $\{1,-1,4,-4\},\;\{2,3\},\{-2,-3\}$}
\end{minipage}%
%
\begin{minipage}{0.30\textwidth}\centering
	\begin{tikzpicture}[baseline=(vc.base)]
		\def\R{1.35cm}
		\coordinate (v1) at (90:\R);
		\coordinate (v2) at (30:\R);
		\coordinate (v3) at (-30:\R);
		\coordinate (v4) at (-90:\R);
		\coordinate (v5) at (-150:\R);
		\coordinate (v6) at (150:\R);
		\draw[gray!40,thin] (v1)--(v2)--(v3)--(v4)--(v5)--(v6)--cycle;
		\fill[gray!25,opacity=0.90] (v1)--(v2)--(v4)--(v5)--cycle;
		\draw[black!75,line width=1.7pt,line join=round]
			(v1)--(v2)--(v4)--(v5)--cycle;
		\node[font=\tiny] at ($(v1)+(0,0.28)$)      {$1$};
		\node[font=\tiny] at ($(v2)+(0.26,0.15)$)   {$2$};
		\node[font=\tiny] at ($(v3)+(0.26,-0.15)$)  {$3$};
		\node[font=\tiny] at ($(v4)+(0,-0.28)$)      {$-1$};
		\node[font=\tiny] at ($(v5)+(-0.26,-0.15)$) {$-2$};
		\node[font=\tiny] at ($(v6)+(-0.26,0.15)$)  {$-3$};
		\node[font=\tiny,black] at (0.22,-0.21)   {$\pm4$};
		\foreach \v in {v1,v2,v3,v4,v5,v6}{
			\node[circle,draw=black,fill=white,inner sep=0pt,
				minimum size=5pt,line width=0.85pt] at (\v) {};}
		\node[circle,draw=black!75,fill=white,inner sep=0pt,
			minimum size=5pt,line width=0.85pt] (vc) at (0,0) {};
	\end{tikzpicture}
	\par\smallskip{\footnotesize $\{1,-1,2,-2,4,-4\},\{3\},\{-3\}$}
\end{minipage}%
%
\begin{minipage}{0.30\textwidth}\centering
	\begin{tikzpicture}[baseline=(vc.base)]
		\def\R{1.35cm}
		\coordinate (v1) at (90:\R);
		\coordinate (v2) at (30:\R);
		\coordinate (v3) at (-30:\R);
		\coordinate (v4) at (-90:\R);
		\coordinate (v5) at (-150:\R);
		\coordinate (v6) at (150:\R);
		\draw[gray!40,thin] (v1)--(v2)--(v3)--(v4)--(v5)--(v6)--cycle;
		\fill[gray!25,opacity=0.90]
			(v1)--(v2)--(v3)--(v4)--(v5)--(v6)--cycle;
		\draw[black!75,line width=1.7pt,line join=round]
			(v1)--(v2)--(v3)--(v4)--(v5)--(v6)--cycle;
		\node[font=\tiny] at ($(v1)+(0,0.28)$)      {$1$};
		\node[font=\tiny] at ($(v2)+(0.26,0.15)$)   {$2$};
		\node[font=\tiny] at ($(v3)+(0.26,-0.15)$)  {$3$};
		\node[font=\tiny] at ($(v4)+(0,-0.28)$)      {$-1$};
		\node[font=\tiny] at ($(v5)+(-0.26,-0.15)$) {$-2$};
		\node[font=\tiny] at ($(v6)+(-0.26,0.15)$)  {$-3$};
		\node[font=\tiny,black] at (0.22,-0.21)   {$\pm4$};
		\foreach \v in {v1,v2,v3,v4,v5,v6}{
			\node[circle,draw=black,fill=white,inner sep=0pt,
				minimum size=5pt,line width=0.85pt] at (\v) {};}
		\node[circle,draw=black!75,fill=white,inner sep=0pt,
			minimum size=5pt,line width=0.85pt] (vc) at (0,0) {};
	\end{tikzpicture}
	\par\smallskip{\footnotesize $[\pm4]$ (max)}
\end{minipage}
\end{center}
\end{eg}

\begin{thm}\cite[Theorem 1.1]{AR04}\label{thm:AR-iso}
	The map $\NC^D(n)\to\NC_{\ora{D}_n}$ sending $\pi$ to the element $w\in W$ whose
	\begin{itemize}
		\item paired cycles are $((i_1,\ldots,i_k))$ for each pair of nonzero blocks $\{i_1,\ldots,i_k\},\{-i_1,\ldots,-i_k\}$ of $\pi$, with $i_1,\ldots,i_k$ ordered by the cyclic order $-1,-2,\ldots,-n,1,2,\ldots,n$, and
		\item balanced cycles are $[n]$ and the cycle of $Z\setminus\{n,-n\}$ in the same order, if $\pi$ has a zero block $Z$,
	\end{itemize}
	is a poset isomorphism.
\end{thm}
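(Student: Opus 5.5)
We prove \cref{thm:AR-iso} by reducing everything to fixed spaces in the reflection representation, and then closing with a counting argument. Realize $W$ acting on $V=\mathbb{R}^n$ by signed permutations of coordinates, with $e_{-i}=-e_i$.

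The first step uses Carter's lemma. It says $\l(w)=\operatorname{codim}\operatorname{Fix}(w)$. For $w\le c$, the Brady--Watt refinement applies: $u\le w$ if and only if $\operatorname{Fix}(w)\subseteq\operatorname{Fix}(u)$ and $\l(u)+\l(u^{-1}w)=\l(w)$. In fact, for elements of $[e,c]$, $u\le w$ is equivalent to $\operatorname{Fix}(w)\subseteq\operatorname{Fix}(u)$ alone.

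Now let $w_\pi$ be the element attached to a $D_n$-partition $\pi$. Its fixed space is
\[
\operatorname{Fix}(w_\pi)=\{x\mid x_i=x_j \text{ if } i,j \text{ lie in a common block},\ x_i=0 \text{ if } i \text{ lies in the zero block}\}.
\]
This subspace determines $\pi$. Hence $\pi\mapsto w_\pi$ is injective, and refinement of partitions corresponds exactly to reverse inclusion of fixed spaces. Counting gives the length: a paired cycle of size $k$ contributes $k-1$ to $\l(w_\pi)$, and a zero block of size $2k$ contributes $k$. The latter is realised by the two balanced cycles $[n]$ and $[Z\setminus\{\pm n\}]$, which together form an even signed permutation.

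The second step, and the main one, is to show that $w_\pi\le c$ exactly when $\pi$ is noncrossing. For "noncrossing $\Rightarrow w_\pi\le c$" I would argue by downward induction on $\l(w_\pi)$. The top element $\pi=\{[\pm n]\}$ maps to $c=[1,\ldots,n-1][n]$.

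For the inductive step, take a noncrossing $\pi\neq\hat 0$. Choose a block, or a pair of blocks, and a reflection $t=((i,j))$ or $t=((i,-j))$ with $i,j$ adjacent in the block's cyclic order. Multiplying $w_\pi$ by $t$ splits that block into two blocks whose convex hulls are disjoint pieces of the original hull. The result is again noncrossing, and it equals $w_{\pi'}$ with $\l(w_{\pi'})=\l(w_\pi)-1$. Since the absolute order is generated by such covers $w_{\pi'}\lessdot w_\pi$, the induction gives $w_\pi\le c$.

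This is where I expect the real work to lie. The zero-block case needs care because of the centroid point $\pm n$. Splitting a zero block either yields a smaller zero block together with a pair, or destroys the zero block entirely, in which case $[n]$ and the other balanced cycle must merge into a paired cycle. I would handle this by a direct computation with $[i_1,\ldots,i_k][n]\cdot((i_a,\pm n))$. The ordering convention $-1,\ldots,-n,1,\ldots,n$ in the statement is precisely what makes this product come out as a single cycle in the noncrossing order.

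For the third step, surjectivity, I would avoid proving the converse directly and instead count. On one side, $|\NC^D(n)|=\frac{3n-2}{n}\binom{2n-2}{n-1}$ by Athanasiadis--Reiner's enumeration of the polygon model. This count can be obtained by decomposing according to whether a zero block is present and which arc the points $\pm n$ attach to, which reduces to type $A$/$B$ Catalan counts. On the other side, $|[e,c]|$ equals the type-$D$ Catalan number $\prod_i (h+d_i)/d_i$, by Bessis/Reiner. An injective map between finite sets of equal size is bijective.

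Finally, poset isomorphism follows from the first step. Both orders are characterised by inclusion of fixed spaces, and this holds on both sides because everything lies in $[e,c]$. So the bijection preserves and reflects the order.
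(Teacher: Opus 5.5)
The paper gives no argument for this statement beyond citing [AR04]. The formal skeleton of your plan is sound: once the map is known to land in $[e,c]$, Carter's lemma and the Brady--Watt description of $[e,c]$ by reverse inclusion of fixed spaces give injectivity and order compatibility, and equal cardinalities then give bijectivity. The gap is in Step~2, the only place where noncrossingness is used, and there the induction runs in the wrong direction. Your inductive step takes a $\pi$ already known to satisfy $w_\pi\le c$ and splits it. This yields $w_{\pi'}\le c$ only for those $\pi'$ reachable from $\hat 1$ by a chain of your splits. What you need is the upward statement: every noncrossing $\pi'\neq\hat 1$ admits a noncrossing $\pi$ with $\ell(w_\pi)=\ell(w_{\pi'})+1$ and $w_{\pi'}=w_\pi t$ for an allowed reflection $t$. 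The remark that ``the absolute order is generated by such covers'' does not provide this. Covers in $[e,c]$ are only available for elements already known to lie in $[e,c]$, which is exactly what you are trying to prove.

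With $t$ restricted to neighbours in the block's cyclic order, the upward statement is in fact false. On a paired cycle, $((b_1,\ldots,b_m))\cdot((b_a,b_{a+1}))$ only splits off the singleton pair $\pm\{b_{a+1}\}$; cutting off an arc $\{b_{a+1},\ldots,b_c\}$ requires $((b_a,b_c))$. Already in type $A$, neighbour splits never produce $\{1,2\},\{3,4\}$ from $\{1,2,3,4\}$. For a type $D$ instance, take $n=5$ and the noncrossing partition $\tau=\{\{\pm3,\pm4,\pm5\},\{1,2\},\{-1,-2\}\}$.
\begin{itemize}
\item Its fixed space $\{x_3=x_4=x_5=0,\ x_1=x_2\}$ is a line, so $\hat 1$ is its only upper cover.
\item One computes $w_\tau=[3,4][5]\,((1,2))=c\cdot((2,-4))=((1,3))\cdot c$.
\item No two of $\pm2,\pm4$, and no two of $\pm1,\pm3$, are neighbours in the cyclic order $1,2,3,4,-1,-2,-3,-4$ of $c$, with or without $\pm5$ inserted as in the statement.
\end{itemize}
So $\tau$ is never reached. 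Without $w_\pi\le c$ for every noncrossing $\pi$, the map is not known to land in $[e,c]$, and the counting in Step~3 cannot even start.

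To repair this, allow every reflection that cuts one block into two noncrossing pieces. Then prove the upward statement by a case analysis (zero block present or not; shape of the block containing $\pm n$), checking in each case that the product has the prescribed cyclic orders. Alternatively, avoid the induction altogether: compute $w_\pi^{-1}c$ directly as the element attached to an explicitly described noncrossing complement of $\pi$, and verify that $\ell(w_\pi)+\ell(w_\pi^{-1}c)=n$.
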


\begin{dfn}
    For a noncrossing $D_n$-partition $\pi\in\NC^D(n)$, we define its
    \emph{Kreweras complement} $\Kr(\pi)$ via the poset isomorphism
    $\NC^D(n)\simeq\NC_{\ora{D}_n}$ of
    \cref{thm:AR-iso}.
\end{dfn}

Combined with \cref{thm:IT}, \cref{thm:AR-iso} gives the following.

\begin{thm}
    There is a poset isomorphism
    \[
        \W_{(-)}\colon \NC^D(n)\simeq\wide(\mod k\ora{D}_n).
    \]
    Moreover, $\W_\pi^{\bot_{0,1}}=\W_{\Kr(\pi)}$ for every $\pi\in\NC^D(n)$.
\end{thm}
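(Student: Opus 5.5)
The plan is to obtain the statement by composing the two poset isomorphisms already available, \cref{thm:AR-iso} and \cref{thm:IT}, and then to transport the Kreweras complement across them. Write $\phi\colon\NC^D(n)\to\NC_{\ora{D}_n}$ for the map of \cref{thm:AR-iso}. Write $\psi\colon\NC_{\ora{D}_n}\to\wide(\mod k\ora{D}_n)$, $w\mapsto\W_w$, for the inverse of $\cox$ in \cref{thm:IT}. Set $\W_\pi:=\psi(\phi(\pi))$. As a composite of poset isomorphisms, $\W_{(-)}$ is a poset isomorphism, provided the two theorems speak about the same poset $\{w\in W\mid e\le w\le c\}$.

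So the first and main step is to check that the data in \cite{AR04} and \cite{IT09} agree. This has three parts.

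\begin{enumerate}
\item \emph{The reflections agree.} In \cref{thm:IT} the reflections are the $s_{[M]}$ for $M\in\ind\mod k\ora{D}_n$. By Gabriel's theorem, $[M]\mapsto$ the corresponding positive root is a bijection onto the positive roots of $D_n$. Hence $\{s_{[M]}\}$ is exactly the set of all reflections of $W(D_n)$. In the signed-permutation model these are the paired transpositions $((i,j))$ with $i\neq\pm j$. It follows that the reflection length $\ell$, and hence the absolute order, coincide with those used in \cite{AR04}.
\item \emph{The Coxeter elements agree.} I will verify by a direct computation that $c_{\ora{D}_n}=s_1\cdots s_{n-1}s_n=[1,\ldots,n-1][n]$, as displayed above. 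This is the Coxeter element with respect to which \cite{AR04} realises $\NC^D(n)$. The point to watch is conventions: the labelling of the $(2n-2)$-gon and the cyclic order $-1,\ldots,-n,1,\ldots,n$ used to order the cycles of $\phi(\pi)$ must match this particular $c$, and not $c^{-1}$ or a conjugate. If a mismatch appears, I will reverse the orientation of the polygon or relabel via an automorphism of $W$. This is harmless, because the combinatorial property used later (presence of a zero block) is invariant under such relabelings.
\item \emph{Conclusion.} With the reflections and the Coxeter element identified, $\phi$ lands in the interval $[e,c]$ of \cref{thm:IT}, and the composite $\psi\circ\phi$ is a poset isomorphism.
\end{enumerate}

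For the orthogonality statement, let $w=\phi(\pi)$, so that $\cox(\W_\pi)=w$. \cref{thm:IT} gives $\cox(\W_\pi)\cox(\W_\pi^{\bot_{0,1}})=c$, and therefore
\[
\cox(\W_\pi^{\bot_{0,1}})=w^{-1}c=\Kr(w).
\]
Since $\cox$ is a bijection with inverse $\psi$, this yields $\W_\pi^{\bot_{0,1}}=\W_{\Kr(w)}$; this is the remark after the definition of $\Kr$. Finally, $\Kr(\pi)$ is defined as $\phi^{-1}(\Kr(\phi(\pi)))$. Hence $\W_{\Kr(\pi)}=\psi(\Kr(w))=\W_\pi^{\bot_{0,1}}$.

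Everything after the first step is formal. The only real content is the matching of reflections and of the Coxeter element between the two references, and that is where I would spend the effort.
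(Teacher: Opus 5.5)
Your proposal is correct and matches the paper: there the theorem is obtained by composing the Athanasiadis--Reiner isomorphism (\cref{thm:AR-iso}) with the Ingalls--Thomas isomorphism (\cref{thm:IT}), and the identity $\W_\pi^{\bot_{0,1}}=\W_{\Kr(\pi)}$ follows from $\cox(\W)\cox(\W^{\bot_{0,1}})=c$ once $\Kr(\pi)$ is defined by transport. The paper takes for granted that the two references use the same reflections and the same Coxeter element $c=[1,\ldots,n-1][n]$; your explicit check of these conventions is a more careful version of the same argument.
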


For $\pi\in\NC^D(n)$, the above isomorphism is restricted to the poset isomorphism
\begin{align*}
    \{\pi'\in\NC^D(n)\mid \pi'\subseteq \pi\}\simeq\wide(\W_\pi).
\end{align*}

The following proposition is an immediate consequence of the above observation.

\begin{prop}\label{prop:str}
    Let $\pi$ be a noncrossing partition. Then 
    \begin{align*}
        \W_\pi\simeq
        \begin{cases}
            \bigoplus_{k\ge1}\mod kA_{k-1}^{\#\{\text{nonzero block of size $k$}\}/\pm1}&\text{ if there is no zero block} \\
            \bigoplus_{k\ge1}\mod kA_{k-1}^{\#\{\text{nonzero block of size $k$}\}/\pm1}\oplus\mod kD_l&\text{ if there is a zero block of size $2l$}
        \end{cases}
    \end{align*}
\end{prop}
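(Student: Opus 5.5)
The plan is to read off the structure of $\W_\pi$ from its simple objects, using that $\W_\pi$ is again the module category of a Dynkin quiver (\cref{rmk:wide-Morita}). Its quiver type is determined by the lattice $\wide(\W_\pi)$. That lattice is identified with the interval $\{\pi'\subseteq\pi\}$ of $\NC^D(n)$, and equivalently with the noncrossing partition lattice of the parabolic-type element $w=\cox(\W_\pi)$ inside $W$.

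The first step is to factor $w$ according to its cycle decomposition. The element $w$ from \cref{thm:AR-iso} is a product of commuting cycles supported on disjoint subsets of $[\pm n]$. For each pair of nonzero blocks $\pm B$ with $|B|=k$ we get a paired cycle. For a zero block $Z$ of size $2l$ we get the product of two balanced cycles, $[n]$ and the cycle on $Z\setminus\{\pm n\}$. Absolute length is additive over this disjoint-support factorization. Hence the interval $[e,w]$ in absolute order splits as a product of the intervals $[e,\gamma]$ over the factors $\gamma$.

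The second step identifies each interval. A paired $k$-cycle $((i_1,\dots,i_k))$ is a Coxeter element of the reflection subgroup $S_k$ (type $A_{k-1}$) generated by the paired transpositions $((i_a,i_b))$. So $[e,\gamma]\cong\NC_{A_{k-1}}$, and the corresponding reflections are exactly the $s_M$ with $s_M\le\gamma$. The zero-block factor is a Coxeter element of the type $D_l$ reflection subgroup on $\pm Z$, since it has the same shape $[1,\dots,l-1][l]$ as $c$ after relabelling. Its interval is therefore $\NC_{D_l}$. When $l=2$ this gives $A_1\times A_1$, which the statement implicitly covers as $D_2$.

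The third step passes from these factorizations back to categories. Simple objects of $\W_\pi$ with $\Ext^1$ in both directions zero, across different factors, give an orthogonal decomposition. Concretely, if $M,N$ are simple in $\W_\pi$ with $s_M\le\gamma$ and $s_N\le\gamma'$ for distinct factors, then $s_M$ and $s_N$ commute and $([M],[N])=0$. Since $M,N$ are distinct simples of a hereditary category, $\Hom(M,N)=0$, and $\la[M],[N]\ra\le0$ and $\la[N],[M]\ra\le0$. Together with $([M],[N])=0$ this forces $\Ext^1$ to vanish in both directions. So $\W_\pi$ is a direct sum of blocks, one for each factor. Each block is $\mod$ of a connected Dynkin quiver whose rank equals the number of its simples, namely $\l(\gamma)$. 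Its type is read off from the Weyl group generated by its simple reflections, which is the reflection subgroup above. This yields $A_{k-1}$ for each pair of nonzero blocks of size $k$ (counted once per $\pm$ pair) and $D_l$ for the zero block.

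The main obstacle is this last identification of the Dynkin type of each block. Knowing the reflection subgroup pins down the root system, but $A_3$ and $D_3$ coincide and small cases need care. I would argue through the root subsystem $\{[M]: s_M\le\gamma\}$, whose type is the type of the reflection subgroup, namely $A_{k-1}$ or $D_l$. The positive roots of a Dynkin quiver determine its type up to these low-rank coincidences, and the coincidences are harmless for the statement.
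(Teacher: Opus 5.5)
Your proof is correct, and it differs from the paper's at the decisive step. Both arguments rest on the fact that the interval below $\pi$ (equivalently $[e,w]$ for $w=\cox(\mathcal{W}_\pi)$) splits block by block into noncrossing partition lattices of types $A_{k-1}$ and $D_l$.

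The paper then argues purely order-theoretically. It notes that $\wide(\mathcal{W}_\pi)$ is a product of the wide-subcategory lattices of the Dynkin summands of $\mathcal{W}_\pi$. Hashimoto's unique factorization theorem for posets matches these factors with the block factors, and the Dynkin type is read off from each lattice.

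You never compare posets. Your sign argument splits the simples of $\mathcal{W}_\pi$ into $\Ext$-orthogonal groups: $\Hom(M,N)=0$ gives $\langle[M],[N]\rangle\le 0$ and $\langle[N],[M]\rangle\le0$, while $([M],[N])=0$. You then identify each block's type through its root subsystem. This uses that $K_0(\mathcal{W}_\pi)\to K_0(\mod k\overrightarrow{D}_n)$ is an isometry for the Euler forms, and that every reflection of $W_\gamma$ lies below its Coxeter element $\gamma$.

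The paper's route is shorter, but it takes for granted that $\NC_{A_{k-1}}$ and $\NC_{D_l}$ ($l\ge3$) are directly indecomposable posets that determine the Dynkin type. Yours replaces this with the standard fact that a root system determines its Dynkin diagram. In particular, the $A_3=D_3$ coincidence needs no special care; it is literally the same diagram.

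You should add one sentence where you pass from additivity of absolute length to the product decomposition of $[e,w]$. Additivity only gives $\gamma\le w$ for each factor. Your third step needs more: every reflection $s_\alpha\le w$ must lie in a single $W_\gamma$. This holds because $s_\alpha\le w$ forces $\alpha\perp\operatorname{Fix}(w)$. For each pair of nonzero blocks $\pm B$, singletons included, $\operatorname{Fix}(w)$ contains a vector whose nonzero coordinates are exactly those indexed by $B$. Hence a root $e_a\pm e_b$ below $w$ has both indices in one nonzero block or both in the zero block. Equivalently, any atom below $\pi$ in refinement order merges two elements of a single block.

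With that in place, the number of simples in the $\gamma$-block equals $\ell(\gamma)$, and the case $l=2$ gives the disconnected $A_1\times A_1$, as you observed.
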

\begin{proof}
    $W_\pi$ is equivalent to the direct sum of the module categories of Dynkin quivers. The Dynkin type is recovered from the poset of wide subcategories. By the unique factorization theorem of posets \cite{H51}, we obtain the desired equivalence.
\end{proof}

The following lemma follows directly from the explicit description of the isomorphism in \cref{thm:AR-iso}.
\begin{lem}\label{lem:Kreweras complement}
    Let $\pi$ be a noncrossing $D_n$-partition. If $\pi$ contains a zero block, then we can compute $\Kr(\pi)$ as follows:
    \begin{itemize}
        \item [(i)] Let $\pi'$ be a slight counterclockwise rotation of $\pi$,
        \item [(ii)] then $\Kr(\pi)$ is the maximal noncrossing partition satisfying $\pi'\cap\Kr(\pi
        )=\emptyset$.
    \end{itemize}
\end{lem}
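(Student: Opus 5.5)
The plan is to compute $\Kr(w)=w^{-1}c$ directly as a permutation of $[\pm n]$. Here $w\in\NC_{\ora{D}_n}$ is the element attached to $\pi$ by \cref{thm:AR-iso}. The idea is to reduce the type $D$ computation to the classical Kreweras complement of noncrossing partitions of a $(2n-2)$-gon.

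\textbf{Step 1: remove the central cycle.} Since $\pi$ has a zero block $Z$, \cref{thm:AR-iso} says $w$ contains the balanced cycle $[n]=(n,-n)$. It also contains the balanced cycle $[Z\setminus\{\pm n\}]$ and paired cycles for the nonzero blocks. Write $w=w_0\,[n]$ and $c=c_0\,[n]$ with $c_0=[1,\ldots,n-1]$. Here $w_0$ and $c_0$ are permutations of $[\pm(n-1)]$, so they commute with the transposition $[n]$, which is an involution. Hence
\[
\Kr(w)=w^{-1}c=w_0^{-1}c_0,
\]
and $\pm n$ are fixed points of $\Kr(w)$, so $\{n\}$ and $\{-n\}$ are singleton blocks of $\Kr(\pi)$. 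Geometrically this is consistent with (ii): the zero block of $\pi'$ contains the centroid in its interior, so any block of a partition avoiding $\pi'$ cannot contain $\pm n$ together with other points.

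\textbf{Step 2: reduce to type $A$.} Now $c_0$ is the $(2n-2)$-cycle $(1,2,\ldots,n-1,-1,\ldots,-(n-1))$, i.e.\ the rotation of the $(2n-2)$-gon. The element $w_0$ is the permutation whose cycles are the boundary-ordered cycles of the blocks of $\pi$ restricted to the polygon vertices. This is a centrally symmetric noncrossing partition, so $w_0\le c_0$ in $S_{2n-2}$. I would invoke the classical type $A$ fact (Biane, or Kreweras directly) for $\sigma\le c_0$ in $S_{2n-2}$. It says the cycles of $\sigma^{-1}c_0$ form the Kreweras complement: interleave primed points $i'$ between $i$ and $c_0(i)$, and take the coarsest noncrossing partition on the primed points whose blocks do not cross the blocks of $\sigma$. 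Placing the primed points midway is the same as rotating $\sigma$ slightly counterclockwise relative to the original labels, which is the $\pi'$ of (i). Central symmetry of $w_0$ and $c_0$ gives central symmetry of $w_0^{-1}c_0$. The output has no zero block: a zero block of $\Kr$ would have to meet the center, which the zero block of $\pi'$ occupies. So the result, together with the singletons $\{\pm n\}$, is exactly the maximal $D_n$-noncrossing partition disjoint from $\pi'$.

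\textbf{Step 3: translate back and check.} It remains to confirm that the resulting even signed permutation corresponds under \cref{thm:AR-iso} to that partition. In particular, I need that the paired cycles come out in the prescribed cyclic order $-1,\ldots,-n,1,\ldots,n$.

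\textbf{Main obstacle.} I expect the hardest part to be the conventions: clockwise labeling versus the direction of rotation, $w^{-1}c$ versus $cw^{-1}$, and right-to-left composition. A wrong choice gives the clockwise rather than counterclockwise rotation in (i). I would resolve this first by checking a small case by hand, e.g.\ $n=4$ with $\pi=\{1,-1,4,-4\},\{2,3\},\{-2,-3\}$. Then I would fix the direction in the type $A$ statement accordingly.
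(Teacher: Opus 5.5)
Your proposal is correct and is essentially the paper's argument. The paper only says the lemma follows directly from the explicit description in \cref{thm:AR-iso}, and your splitting $w^{-1}c=w_0^{-1}c_0$ (with $\pm n$ fixed), followed by the classical type $A$ Kreweras/Biane description on the $(2n-2)$-gon, is exactly the computation that remark leaves implicit. Your convention worry also resolves in your favor: the paper composes permutations right to left (forced by $c=s_1\cdots s_n=[1,\ldots,n-1][n]$), and with that convention $w_0^{-1}c_0$ reproduces the paper's $D_4$ examples, e.g.\ $\pi=\{1,-1,4,-4\},\{2,3\},\{-2,-3\}$ gives $((1,3))$, which matches the counterclockwise rotation in (i).
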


\begin{eg}
	We illustrate the Kreweras complement with three examples in $\NC^D(4)$.
\begin{center}
\begin{minipage}[c]{0.42\textwidth}\centering
	\begin{tikzpicture}[baseline=(vc.base)]
		\def\R{1.35cm}
		\coordinate (v1) at (90:\R);
		\coordinate (v2) at (30:\R);
		\coordinate (v3) at (-30:\R);
		\coordinate (v4) at (-90:\R);
		\coordinate (v5) at (-150:\R);
		\coordinate (v6) at (150:\R);
		\draw[gray!40,thin] (v1)--(v2)--(v3)--(v4)--(v5)--(v6)--cycle;
		\draw[black!75,line width=2.0pt,line cap=round] (v1)--(v4);
		\node[font=\tiny] at ($(v1)+(0,0.28)$)      {$1$};
		\node[font=\tiny] at ($(v2)+(0.26,0.15)$)   {$2$};
		\node[font=\tiny] at ($(v3)+(0.26,-0.15)$)  {$3$};
		\node[font=\tiny] at ($(v4)+(0,-0.28)$)      {$-1$};
		\node[font=\tiny] at ($(v5)+(-0.26,-0.15)$) {$-2$};
		\node[font=\tiny] at ($(v6)+(-0.26,0.15)$)  {$-3$};
		\node[font=\tiny,black] at (0.22,-0.21)   {$\pm4$};
		\foreach \v in {v1,v2,v3,v4,v5,v6}{
			\node[circle,draw=black,fill=white,inner sep=0pt,
				minimum size=5pt,line width=0.85pt] at (\v) {};}
		\node[circle,draw=black!75,fill=white,inner sep=0pt,
			minimum size=5pt,line width=0.85pt] (vc) at (0,0) {};
	\end{tikzpicture}
	\par\smallskip{\footnotesize $\{1,-1,4,-4\},\{2\},\{-2\},\{3\},\{-3\}$}
\end{minipage}%
\begin{minipage}[c]{0.08\textwidth}\centering
	$\xrightarrow{\smash{\Kr}}$
\end{minipage}%
\begin{minipage}[c]{0.42\textwidth}\centering
	\begin{tikzpicture}[baseline=(vc.base)]
		\def\R{1.35cm}
		\coordinate (v1) at (90:\R);
		\coordinate (v2) at (30:\R);
		\coordinate (v3) at (-30:\R);
		\coordinate (v4) at (-90:\R);
		\coordinate (v5) at (-150:\R);
		\coordinate (v6) at (150:\R);
		\draw[gray!40,thin] (v1)--(v2)--(v3)--(v4)--(v5)--(v6)--cycle;
		\fill[gray!25,opacity=0.85] (v1)--(v2)--(v3)--cycle;
		\draw[black!75,line width=1.7pt,line join=round] (v1)--(v2)--(v3)--cycle;
		\fill[gray!25,opacity=0.85] (v4)--(v5)--(v6)--cycle;
		\draw[black!75,line width=1.7pt,line join=round] (v4)--(v5)--(v6)--cycle;
		\node[font=\tiny] at ($(v1)+(0,0.28)$)      {$1$};
		\node[font=\tiny] at ($(v2)+(0.26,0.15)$)   {$2$};
		\node[font=\tiny] at ($(v3)+(0.26,-0.15)$)  {$3$};
		\node[font=\tiny] at ($(v4)+(0,-0.28)$)      {$-1$};
		\node[font=\tiny] at ($(v5)+(-0.26,-0.15)$) {$-2$};
		\node[font=\tiny] at ($(v6)+(-0.26,0.15)$)  {$-3$};
		\node[font=\tiny,black] at (0.22,-0.21)   {$\pm4$};
		\foreach \v in {v1,v2,v3,v4,v5,v6}{
			\node[circle,draw=black,fill=white,inner sep=0pt,
				minimum size=5pt,line width=0.85pt] at (\v) {};}
		\node[circle,draw=black!75,fill=white,inner sep=0pt,
			minimum size=5pt,line width=0.85pt] (vc) at (0,0) {};
	\end{tikzpicture}
	\par\smallskip{\footnotesize $\{1,2,3\},\{-1,-2,-3\},\{4\},\{-4\}$}
\end{minipage}

\medskip

\begin{minipage}[c]{0.42\textwidth}\centering
	\begin{tikzpicture}[baseline=(vc.base)]
		\def\R{1.35cm}
		\coordinate (v1) at (90:\R);
		\coordinate (v2) at (30:\R);
		\coordinate (v3) at (-30:\R);
		\coordinate (v4) at (-90:\R);
		\coordinate (v5) at (-150:\R);
		\coordinate (v6) at (150:\R);
		\draw[gray!40,thin] (v1)--(v2)--(v3)--(v4)--(v5)--(v6)--cycle;
		\draw[black!75,line width=2.0pt,line cap=round] (v1)--(v4);
		\draw[black!75,line width=1.7pt,line cap=round] (v2)--(v3);
		\draw[black!75,line width=1.7pt,line cap=round] (v5)--(v6);
		\node[font=\tiny] at ($(v1)+(0,0.28)$)      {$1$};
		\node[font=\tiny] at ($(v2)+(0.26,0.15)$)   {$2$};
		\node[font=\tiny] at ($(v3)+(0.26,-0.15)$)  {$3$};
		\node[font=\tiny] at ($(v4)+(0,-0.28)$)      {$-1$};
		\node[font=\tiny] at ($(v5)+(-0.26,-0.15)$) {$-2$};
		\node[font=\tiny] at ($(v6)+(-0.26,0.15)$)  {$-3$};
		\node[font=\tiny,black] at (0.22,-0.21)   {$\pm4$};
		\foreach \v in {v1,v2,v3,v4,v5,v6}{
			\node[circle,draw=black,fill=white,inner sep=0pt,
				minimum size=5pt,line width=0.85pt] at (\v) {};}
		\node[circle,draw=black!75,fill=white,inner sep=0pt,
			minimum size=5pt,line width=0.85pt] (vc) at (0,0) {};
	\end{tikzpicture}
	\par\smallskip{\footnotesize $\{1,-1,4,-4\},\{2,3\},\{-2,-3\}$}
\end{minipage}%
\begin{minipage}[c]{0.08\textwidth}\centering
	$\xrightarrow{\smash{\Kr}}$
\end{minipage}%
\begin{minipage}[c]{0.42\textwidth}\centering
	\begin{tikzpicture}[baseline=(vc.base)]
		\def\R{1.35cm}
		\coordinate (v1) at (90:\R);
		\coordinate (v2) at (30:\R);
		\coordinate (v3) at (-30:\R);
		\coordinate (v4) at (-90:\R);
		\coordinate (v5) at (-150:\R);
		\coordinate (v6) at (150:\R);
		\draw[gray!40,thin] (v1)--(v2)--(v3)--(v4)--(v5)--(v6)--cycle;
		\draw[black!75,line width=1.7pt,line cap=round] (v1)--(v3);
		\draw[black!75,line width=1.7pt,line cap=round] (v4)--(v6);
		\node[font=\tiny] at ($(v1)+(0,0.28)$)      {$1$};
		\node[font=\tiny] at ($(v2)+(0.26,0.15)$)   {$2$};
		\node[font=\tiny] at ($(v3)+(0.26,-0.15)$)  {$3$};
		\node[font=\tiny] at ($(v4)+(0,-0.28)$)      {$-1$};
		\node[font=\tiny] at ($(v5)+(-0.26,-0.15)$) {$-2$};
		\node[font=\tiny] at ($(v6)+(-0.26,0.15)$)  {$-3$};
		\node[font=\tiny,black] at (0.22,-0.21)   {$\pm4$};
		\foreach \v in {v1,v2,v3,v4,v5,v6}{
			\node[circle,draw=black,fill=white,inner sep=0pt,
				minimum size=5pt,line width=0.85pt] at (\v) {};}
		\node[circle,draw=black!75,fill=white,inner sep=0pt,
			minimum size=5pt,line width=0.85pt] (vc) at (0,0) {};
	\end{tikzpicture}
	\par\smallskip{\footnotesize $\{1,3\},\{-1,-3\},\{2\},\{-2\},\{4\},\{-4\}$}
\end{minipage}

\medskip

\begin{minipage}[c]{0.42\textwidth}\centering
	\begin{tikzpicture}[baseline=(vc.base)]
		\def\R{1.35cm}
		\coordinate (v1) at (90:\R);
		\coordinate (v2) at (30:\R);
		\coordinate (v3) at (-30:\R);
		\coordinate (v4) at (-90:\R);
		\coordinate (v5) at (-150:\R);
		\coordinate (v6) at (150:\R);
		\draw[gray!40,thin] (v1)--(v2)--(v3)--(v4)--(v5)--(v6)--cycle;
		\fill[gray!25,opacity=0.90] (v1)--(v2)--(v4)--(v5)--cycle;
		\draw[black!75,line width=1.7pt,line join=round]
			(v1)--(v2)--(v4)--(v5)--cycle;
		\node[font=\tiny] at ($(v1)+(0,0.28)$)      {$1$};
		\node[font=\tiny] at ($(v2)+(0.26,0.15)$)   {$2$};
		\node[font=\tiny] at ($(v3)+(0.26,-0.15)$)  {$3$};
		\node[font=\tiny] at ($(v4)+(0,-0.28)$)      {$-1$};
		\node[font=\tiny] at ($(v5)+(-0.26,-0.15)$) {$-2$};
		\node[font=\tiny] at ($(v6)+(-0.26,0.15)$)  {$-3$};
		\node[font=\tiny,black] at (0.22,-0.21)   {$\pm4$};
		\foreach \v in {v1,v2,v3,v4,v5,v6}{
			\node[circle,draw=black,fill=white,inner sep=0pt,
				minimum size=5pt,line width=0.85pt] at (\v) {};}
		\node[circle,draw=black!75,fill=white,inner sep=0pt,
			minimum size=5pt,line width=0.85pt] (vc) at (0,0) {};
	\end{tikzpicture}
	\par\smallskip{\footnotesize $\{1,-1,2,-2,4,-4\},\{3\},\{-3\}$}
\end{minipage}%
\begin{minipage}[c]{0.08\textwidth}\centering
	$\xrightarrow{\smash{\Kr}}$
\end{minipage}%
\begin{minipage}[c]{0.42\textwidth}\centering
	\begin{tikzpicture}[baseline=(vc.base)]
		\def\R{1.35cm}
		\coordinate (v1) at (90:\R);
		\coordinate (v2) at (30:\R);
		\coordinate (v3) at (-30:\R);
		\coordinate (v4) at (-90:\R);
		\coordinate (v5) at (-150:\R);
		\coordinate (v6) at (150:\R);
		\draw[gray!40,thin] (v1)--(v2)--(v3)--(v4)--(v5)--(v6)--cycle;
		\draw[black!75,line width=1.7pt,line cap=round] (v2)--(v3);
		\draw[black!75,line width=1.7pt,line cap=round] (v5)--(v6);
		\node[font=\tiny] at ($(v1)+(0,0.28)$)      {$1$};
		\node[font=\tiny] at ($(v2)+(0.26,0.15)$)   {$2$};
		\node[font=\tiny] at ($(v3)+(0.26,-0.15)$)  {$3$};
		\node[font=\tiny] at ($(v4)+(0,-0.28)$)      {$-1$};
		\node[font=\tiny] at ($(v5)+(-0.26,-0.15)$) {$-2$};
		\node[font=\tiny] at ($(v6)+(-0.26,0.15)$)  {$-3$};
		\node[font=\tiny,black] at (0.22,-0.21)   {$\pm4$};
		\foreach \v in {v1,v2,v3,v4,v5,v6}{
			\node[circle,draw=black,fill=white,inner sep=0pt,
				minimum size=5pt,line width=0.85pt] at (\v) {};}
		\node[circle,draw=black!75,fill=white,inner sep=0pt,
			minimum size=5pt,line width=0.85pt] (vc) at (0,0) {};
	\end{tikzpicture}
	\par\smallskip{\footnotesize $\{2,3\},\{-2,-3\},\{1\},\{-1\},\{4\},\{-4\}$}
\end{minipage}
\end{center}
\end{eg}

Now we are able to prove that if a wide subcategory $\W$ contains $\mod kD_k$ as a component for some $k\ge4$, then $\W^{\bot_{0,1}}$ does not contain $\mod kD_l$ as a component for any $l\ge4$.

\begin{proof}[Proof of Proposition \ref{prop:typeD}]
    Let $\W$ be a wide subcategory of $\mod k\ora{D}_n$ containing $\mod kD_k$ as a component for some $k\ge4$. 
    Let $\pi$ be a noncrossing $D_n$-partition such that $\W_\pi=\W$. By Proposition \ref{prop:str}, the partition $\pi$ contains a zero block.
    By Lemma \ref{lem:Kreweras complement}, if $\pi$ contains a zero block, then $\Kr(\pi)$ does not. Proposition \ref{prop:str} implies that $\W_\pi^{\bot_{0,1}}=\W_{\Kr(\pi)}$ does not contain $\mod kD_l$ as a component for any $l\ge4$.
\end{proof}

\section{Main result}\label{sec:5}

Recall that two graded algebras $A$ and $B$ are said to be \emph{graded Morita equivalent} if there is an equivalence between the categories of graded modules $\Grmod A$ and $\Grmod B$ that is compatible with the grading shifts.

We are now ready to prove our main theorem.

\begin{thm}\label{thm:main}
    Let $A=kQ/I$ be a locally finite non-positively graded algebra. Then, the following conditions are equivalent:
    \begin{itemize}
        \item [(1)] $A$ is derived-discrete,
        \item [(2)] $A$ is either 
        \begin{itemize}
            \item [(a)] graded Morita equivalent to a piecewise hereditary algebra of Dynkin type,
            \item [(b)] a graded gentle one-cycle algebra not satisfying the graded clock condition.
        \end{itemize}
    \end{itemize}
\end{thm}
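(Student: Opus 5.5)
The direction (2)$\Rightarrow$(1) is the easy half, and I will do it first. In case (b), \cref{thm:der-dis-gentle} applies directly: a locally finite non-positively graded gentle one-cycle algebra not satisfying the graded clock condition is derived-discrete. In case (a), a graded Morita equivalence $\Grmod A\simeq\Grmod B$ compatible with shifts induces an equivalence of the orbit categories $\D^b_{\grmod_0}(\Grmod -)/\Sigma(-1)$. I would rather argue on the hereditary side via \cref{thm:KY}. If $B$ is a piecewise hereditary algebra of Dynkin type, I expect the graded Morita class to be realized by a graded hereditary algebra $kQ'$ with $Q'$ Dynkin and some non-positive grading. Then \cref{thm:KY} (essential surjectivity in the graded hereditary case) identifies $\pvd kQ'$ with the orbit category of $\D^b(\grmod_0 kQ')$. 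Indecomposables there are shifts of indecomposable graded modules over a Dynkin quiver, and there are only finitely many with a given dimension vector, since graded modules over $kQ'$ are modules over the covering quiver, a union of Dynkin quivers. Hence $kQ'$ is derived-discrete, and by Remark~\ref{rmk:dd-pre-under-der} so is $A$.

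For (1)$\Rightarrow$(2), assume $A=kQ/I$ is derived-discrete. By Proposition~\ref{prop:DD-DD}, every truncation $\wt{A}^{[-n,0]}=k\wt{Q}^{[-n,0]}/\wt{I}^{[-n,0]}$ is derived-discrete, so by \cref{thm:Vossieck} each connected component is either piecewise hereditary of Dynkin type or gentle one-cycle without the clock condition. Taking $n$ larger than the total degree span of the relevant paths, I would read off local properties of $(Q,I)$ from these truncations. Each vertex has at most two in- and out-arrows, relations have length two, and so on, exactly as in Vossieck's reduction; these properties are detected in a single degree window. The output should be that either $Q$ (as an ungraded graph) together with $I$ is gentle one-cycle, or the truncations are all piecewise hereditary Dynkin. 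In the gentle one-cycle case, I would note that a graded gentle one-cycle algebra satisfying the graded clock condition is derived equivalent to $\widetilde{A}$ with trivial grading (Proposition~\ref{prop:gr-clock-cdt}), which is not derived-discrete. So \cref{thm:der-dis-gentle} gives (b).

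The remaining case, where all truncations are piecewise hereditary of Dynkin type, is the main obstacle. The plan is a degeneration argument. As $n$ grows, $\wt{A}^{[-n,0]}$ contains $\wt{A}^{[-n+1,0]}$-type pieces, and the idempotent subalgebras sitting in degree windows stabilize. Since $\D^b(\mod\wt{A}^{[-n,0]})$ admits semi-orthogonal decompositions into the pieces supported in degrees $[-n,-m-1]$ and $[-m,0]$, these pieces are again Dynkin. I want to show that the Dynkin type of the component containing a fixed degree-$0$ copy of $Q$ grows without bound unless $A$ is concentrated in degree $0$ up to graded Morita equivalence. Unbounded rank is impossible for $E$-types, so only $A$- or $D$-types could persist. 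Type $A$ growth forces a cyclic gentle shape, which returns us to the gentle one-cycle case. For type $D$, I would use \cref{thm:SOD}: the grading shift produces an SOD of a $D_N$ category whose two factors both contain $D_l$ summands with $l\ge4$ (a degree-$0$ copy and a shifted copy), contradicting the theorem. With $D$ and unbounded growth excluded, $\wt{A}^{[-n,0]}$ decomposes as copies of a fixed algebra. Then $A$ has all arrows of degree $0$ up to a graded Morita equivalence, i.e.\ it is a regraded piecewise hereditary Dynkin algebra, giving (a). Making this step precise, in particular constructing the two $D$-factors with an honest SOD, is where I expect most of the work.
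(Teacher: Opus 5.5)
Your architecture matches the paper's. You truncate to $\wt{A}^{[-n,0]}$, apply Proposition~\ref{prop:DD-DD} and \cref{thm:Vossieck}, exclude type $E$ by growth, exclude type $D$ via the degree-window decomposition $\D^b(\grmod_0^{(-2nk,0]}A)=\D^b(\grmod_0^{(-2nk,-nk]}A)\perp\D^b(\grmod_0^{(-nk,0]}A)$ and \cref{thm:SOD}, and finish with \cref{thm:der-dis-gentle}. The gap is the step that turns information about the truncations into the global shape of $Q$. ``Reading off local properties as in Vossieck's reduction'' cannot supply it, because the cycles of $Q$ are not the cycles of a component of $\wt{Q}^{[-n,0]}$: in case (b) these components can be trees, as in Example~\ref{eg:n-cycle}. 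Likewise, ``type $A$ growth forces a cyclic gentle shape'' is precisely the claim that needs proof.

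The paper supplies it by a normalization plus a cycle count in the covering. First replace $A$ by a graded Morita equivalent algebra so that $Q'=\wt{Q}^{[0,0]}$ is connected; if then $A=A_0$, \cref{thm:Vossieck} applies directly. Otherwise every level of $\wt{Q}^{[-n,0]}$ carries a full connected copy of $Q'$, and two cases arise.
\begin{itemize}
\item If $Q$ had two negative arrows, the component containing degree $0$ would have unboundedly many independent cycles as $n$ grows (Step 1).
\item If there is one negative arrow $\alpha$ of degree $-k$ and $Q'$ contains a cycle, then already in $\wt{Q}^{[-k,0]}$ the copies of $Q'$ in degrees $0$ and $-k$ are joined by $\alpha_0$. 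This gives two cycles in one component (Case 3).
\end{itemize}
Either outcome is incompatible with the main component being a gentle tree or gentle one-cycle algebra, which is the situation left once types $D$ and $E$ are excluded. Hence $Q=Q'\sqcup\{\alpha\}$ with $Q'$ a tree, so $Q$ has exactly one cycle, and $I$ inherits the gentle conditions from the gentle tree truncations. In particular, your first branch is misdescribed: once $A\neq A_0$, a gentle one-cycle truncation is a contradiction, not a route to (b).

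The same normalization settles the point you flagged in the type $D$ step. With $Q'$ connected and a single negative arrow of degree $-k$, the component of $\wt{Q}^{(-2nk,0]}$ containing degree $0$ consists exactly of the levels $0,-k,\dots,-(2n-1)k$. The two windows therefore cut it into two shifted copies of the main component of $\wt{Q}^{(-nk,0]}$. Restricting the decomposition to this block produces two factors of type $D$, contradicting \cref{thm:SOD}.

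Separately, in (2)$\Rightarrow$(1) your expectation that the graded Morita class contains a graded hereditary algebra is false. A graded Morita equivalence $A\simeq\bigoplus_d\Hom_{\Grmod B}(P,P(d))$ with $P=\bigoplus_iP_i(l_i)$ leaves the underlying ungraded algebra unchanged up to Morita equivalence. For example, linear $kA_3/(\rad kA_3)^2$ cannot be regraded into a hereditary algebra. The detour is also unnecessary: the same formula exhibits $A$ as the formal derived endomorphism algebra of the generator $\bigoplus_i\Sigma^{l_i}P_i$ of $\per B$. Thus $A$ is derived equivalent to $B$, and \cref{thm:Vossieck} together with Remark~\ref{rmk:dd-pre-under-der} gives (1).
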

\begin{proof}
    $(2)\To(1)$: This follows from \cref{thm:Vossieck} and Theorem \ref{thm:der-dis-gentle}.

    $(1)\To(2)$:
    Assume that $A$ is derived-discrete. 
    By Theorem \ref{thm:der-dis-gentle}, it suffices to show that $A$ is graded Morita equivalent to a piecewise hereditary algebra of Dynkin type, or a graded gentle one-cycle algebra. By replacing $A=kQ/I$ with a graded Morita equivalent algebra (via a shift of the indecomposable projectives), we may assume that the degree zero part $\widetilde{Q}^{[0,0]}$ is connected.
    
    If $A = A_0$, the claim follows from \cref{thm:Vossieck}. Henceforth, assume $A \neq A_0$, so the graded quiver $Q$ contains at least one negative arrow.
    By Proposition \ref{prop:DD-DD} and \cref{thm:Vossieck}, every connected component of $\widetilde{A}^{[-n,0]}=k\wt{Q}^{[-n,0]}/\wt{I}^{[-n,0]}$ is either a piecewise hereditary algebra of Dynkin type or a gentle one-cycle algebra not satisfying the clock condition.
    
    \medskip
    \noindent\textit{Step 1: $Q$ has exactly one negative arrow.}
    
    Suppose $Q$ contains more than one negative arrow.
    Then for all sufficiently large $n$, the connected component of $\widetilde{Q}^{[-n,0]}$ containing $\widetilde{Q}^{[0,0]}$ has more than one cycle, contradicting the fact that each connected component of $\widetilde{A}^{[-n,0]}$ is a piecewise hereditary of Dynkin type or a gentle one-cycle algebra. Hence, there exists a subquiver $Q' (=\wt{Q}^{[0,0]})$ concentrated in degree $0$ and a single negative arrow $\alpha$ of degree $-k$ such that $Q = Q' \sqcup \{\alpha\}$.
    
    \medskip
    \noindent\textit{Step 2: Case analysis.}
    
    For each $n \geq 0$, let $\wt{A}_{\mathrm{main}}^{[-n,0]}$ denote the connected component of $\widetilde{A}^{[-n,0]}$ containing $A_0$. Note that the number of vertices of $\wt{A}_{\mathrm{main}}^{[-n,0]}$
    tends to infinity as $n \to \infty$; in particular, $\wt{A}_{\mathrm{main}}^{[-n,0]}$ cannot be of Dynkin type $E$ for all sufficiently large $n$. We thus consider the following three cases for $n \gg 0$.
    
    \medskip
    \noindent\textbf{Case 1:}
    $\wt{A}_{\mathrm{main}}^{[-n,0]}$ is a piecewise hereditary algebra of Dynkin type $D$ for every $n \gg 0$.
    
    \noindent
    The degree truncation of $A$ yields a semi-orthogonal decomposition
    \[
      \D^b(\grmod_0^{(-2nk,0]}\,A)
      =
      \mathcal{D}^b(\grmod_0^{(-2nk,-nk]}\,A)
      \perp
      \mathcal{D}^b(\grmod_0^{(-nk,0]}\,A).
    \]
    Restricting to the component $\wt{A}_{\mathrm{main}}^{[-2nk,0]}$, this induces a semi-orthogonal decomposition
    \[
      \D^b(\mod\wt{A}_{\mathrm{main}}^{(-2nk,0]}) = \T_2 \bot \T_1,
    \]
    where $\T_1$ and $\T_2$ are each equivalent to $\D^b(\mod \wt{A}_{\mathrm{main}}^{(-nk,0]})$, which is of type $D$. This contradicts Theorem~\ref{thm:SOD}.
    
    \medskip
    \noindent\textbf{Case 2:} $\wt{A}_{\mathrm{main}}^{[-n,0]}$ is a gentle tree algebra for every $n \gg 0$.

    \noindent Since $\wt{Q}^{[-n,0]}$ is a tree, the subquiver $Q'$ must also be a tree. Since $\wt{A}_{\mathrm{main}}^{[-n,0]}$ is a gentle tree algebra, the ideal $\wt{I}^{[-n,0]}$ is monomial and generated by paths of length two satisfying the gentle conditions. By the construction of $\wt{I}^{[-n,0]}$ from $I$, the ideal $I$ itself satisfies the gentle conditions. Since $Q'$ is a tree, $A$ is a graded gentle one-cycle algebra.
    
    \medskip
    \noindent\textbf{Case 3:}
    $\wt{A}_{\mathrm{main}}^{[-n,0]}$ is a gentle one-cycle algebra not satisfying the clock condition for some $n\gg0$.
    
    \noindent
    In this case, $Q'$ contains a cycle. Then, the quiver $\widetilde{Q}^{[-k,0]}$ contains two copies of $Q'$, and so contains two cycles. This is a contradiction.
\end{proof}

\bibliographystyle{alpha}
\bibliography{my.bib}

@article {KY18,
    AUTHOR = {Kalck, Martin and Yang, Dong},
     TITLE = {Derived categories of graded gentle one-cycle algebras},
   JOURNAL = {J. Pure Appl. Algebra},
  FJOURNAL = {Journal of Pure and Applied Algebra},
    VOLUME = {222},
      YEAR = {2018},
    NUMBER = {10},
     PAGES = {3005--3035},
      ISSN = {0022-4049,1873-1376},
   MRCLASS = {16E35 (16E45 18E30)},
  MRNUMBER = {3795632},
MRREVIEWER = {George\ Ciprian\ Modoi},
       DOI = {10.1016/j.jpaa.2017.11.011},
       URL = {https://doi.org/10.1016/j.jpaa.2017.11.011},
}

@article{AH24, title={WHEN IS THE SILTING-DISCRETENESS INHERITED?}, volume={256}, DOI={10.1017/nmj.2024.8}, journal={Nagoya Mathematical Journal}, author={Aihara, Takuma and Honma, Takahiro}, year={2024}, pages={905–927}}

@article{AI12,
  title={Silting mutation in triangulated categories},
  author={Aihara, Takuma and Iyama, Osamu},
  journal={Journal of the London Mathematical Society},
  volume={85},
  number={3},
  pages={633--668},
  year={2012},
  publisher={Wiley Online Library}
}

@article{AMY19,
  AUTHOR = {Adachi, Takahide and Mizuno, Yuya and Yang, Dong},
  TITLE = {Discreteness of silting objects and {$t$}-structures in triangulated categories},
  JOURNAL = {Proc. Lond. Math. Soc. (3)},
  VOLUME = {118},
  YEAR = {2019},
  NUMBER = {1},
  PAGES = {1--42},
  DOI = {10.1112/plms.12176},
}

@article {BPP18,
    AUTHOR = {Broomhead, Nathan and Pauksztello, David and Ploog, David},
     TITLE = {Discrete triangulated categories},
   JOURNAL = {Bull. Lond. Math. Soc.},
  FJOURNAL = {Bulletin of the London Mathematical Society},
    VOLUME = {50},
      YEAR = {2018},
    NUMBER = {1},
     PAGES = {174--188},
      ISSN = {0024-6093,1469-2120},
   MRCLASS = {18E30 (16E35 16G10)},
  MRNUMBER = {3778555},
MRREVIEWER = {Qunhua\ Liu},
       DOI = {10.1112/blms.12125},
       URL = {https://doi.org/10.1112/blms.12125},
}

@article{CSPP22,
  title={Functorially finite hearts, simple-minded systems in negative cluster categories, and noncrossing partitions},
  author={Coelho Sim{\~o}es, Raquel and Pauksztello, David and Ploog, David},
  journal={Compositio Mathematica},
  volume={158},
  number={1},
  pages={211--243},
  year={2022},
  publisher={London Mathematical Society}
}

@article {K94,
    AUTHOR = {Keller, Bernhard},
     TITLE = {Deriving {DG} categories},
   JOURNAL = {Ann. Sci. \'Ecole Norm. Sup. (4)},
  FJOURNAL = {Annales Scientifiques de l'\'Ecole Normale Sup\'erieure.
              Quatri\`eme S\'erie},
    VOLUME = {27},
      YEAR = {1994},
    NUMBER = {1},
     PAGES = {63--102},
      ISSN = {0012-9593},
   MRCLASS = {18E30 (16D90)},
  MRNUMBER = {1258406},
MRREVIEWER = {Jeremy\ Rickard},
       URL = {http://www.numdam.org/item?id=ASENS_1994_4_27_1_63_0},
}

@article {Vos01,
    AUTHOR = {Vossieck, Dieter},
     TITLE = {The algebras with discrete derived category},
   JOURNAL = {J. Algebra},
  FJOURNAL = {Journal of Algebra},
    VOLUME = {243},
      YEAR = {2001},
    NUMBER = {1},
     PAGES = {168--176},
      ISSN = {0021-8693,1090-266X},
   MRCLASS = {16G10 (16E10 18E30)},
  MRNUMBER = {1851659},
MRREVIEWER = {Bin\ Zhu},
       DOI = {10.1006/jabr.2001.8783},
       URL = {https://doi.org/10.1006/jabr.2001.8783},
}

@article {AR04,
    AUTHOR = {Athanasiadis, Christos A. and Reiner, Victor},
     TITLE = {Noncrossing partitions for the group {$D_n$}},
   JOURNAL = {SIAM J. Discrete Math.},
  FJOURNAL = {SIAM Journal on Discrete Mathematics},
    VOLUME = {18},
      YEAR = {2004},
    NUMBER = {2},
     PAGES = {397--417},
      ISSN = {0895-4801,1095-7146},
   MRCLASS = {06A07 (05A18 05E15)},
  MRNUMBER = {2112514},
MRREVIEWER = {Reinhard\ O. W. Franz},
       DOI = {10.1137/S0895480103432192},
       URL = {https://doi.org/10.1137/S0895480103432192},
}

@article {IT09,
    AUTHOR = {Ingalls, Colin and Thomas, Hugh},
     TITLE = {Noncrossing partitions and representations of quivers},
   JOURNAL = {Compos. Math.},
  FJOURNAL = {Compositio Mathematica},
    VOLUME = {145},
      YEAR = {2009},
    NUMBER = {6},
     PAGES = {1533--1562},
      ISSN = {0010-437X,1570-5846},
   MRCLASS = {16G20 (05E10 05E15 13F60)},
  MRNUMBER = {2575093},
MRREVIEWER = {Gregoire\ Dupont},
       DOI = {10.1112/S0010437X09004023},
       URL = {https://doi.org/10.1112/S0010437X09004023},
}

@article {H51,
    AUTHOR = {Hashimoto, Junji},
     TITLE = {On direct product decomposition of partially ordered sets},
   JOURNAL = {Ann. of Math. (2)},
  FJOURNAL = {Annals of Mathematics. Second Series},
    VOLUME = {54},
      YEAR = {1951},
     PAGES = {315--318},
      ISSN = {0003-486X},
   MRCLASS = {09.1X},
  MRNUMBER = {43067},
MRREVIEWER = {G.\ Birkhoff},
       DOI = {10.2307/1969532},
       URL = {https://doi.org/10.2307/1969532},
}

@article{LZ21,
title={The geometric model of gentle one-cycle algebras},
author={Liu, Yu-Zhe and Zhang, Chao},
journal={Bull. Malays. Math. Sci. Soc.},
volume={44},
number={4},
pages={2489--2505},
year={2021}
}

@article{OPS18,
title={A geometric model for the derived category of gentle algebras},
author={Opper, Sebastian and Plamondon, Pierre-Guy and Schroll, Sibylle},
journal={arXiv preprint arXiv:1801.09659},
year={2018}
}

@article{Sch15,
title={Trivial extensions of gentle algebras and {B}rauer graph algebras},
author={Schroll, Sibylle},
journal={J. Algebra},
volume={444},
pages={183--200},
year={2015}
}

@article{ABCP10,
title={Gentle algebras arising from surface triangulations},
author={Assem, Ibrahim and Br{\"u}stle, Thomas and Charbonneau-Jodoin, Gabrielle and Plamondon, Pierre-Guy},
journal={Algebra Number Theory},
volume={4},
number={2},
pages={201--229},
year={2010}
}

@article{LGH24,
  title={Homological dimensions of gentle algebras via geometric models},
  author={Liu, Yu-Zhe and Gao, Hanpeng and Huang, Zhaoyong},
  journal={Sci. China Math.},
  volume={67},
  number={4},
  pages={733--766},
  year={2024}
}

@article{LP20,
title={Derived equivalences of gentle algebras via Fukaya categories},
author={Lekili, Yank{\i} and Polishchuk, Alexander},
journal={Math. Ann.},
volume={376},
number={1},
pages={187--225},
year={2020}
}

@article{HKK17,
title={Flat surfaces and stability structures},
author={Haiden, Fabian and Katzarkov, Ludmil and Kontsevich, Maxim},
journal={Publ. Math. Inst. Hautes \'{E}tudes Sci.},
volume={126},
pages={247--318},
year={2017}
}

@article{JSW23,
title={A complete derived invariant and silting theory for graded gentle algebras},
author={Jin, Haibo and Schroll, Sibylle and Wang, Zhengfang},
journal={arXiv preprint arXiv:2303.17474},
year={2023}
}

@article{CJSW25,
title={On the $\tau$-tilting finiteness and silting-discreteness of graded (skew-) gentle algebras},
author={Chang, Wen and Jin, Haibo and Schroll, Sibylle and Wang, Qi},
journal={arXiv preprint arXiv:2512.24316},
year={2025}
}

@article{OZ22,
title={Derived equivalence classification of {B}rauer graph algebras},
author={Opper, Sebastian and Zvonareva, Alexandra},
journal={Adv. Math.},
volume={402},
pages={108341},
year={2022}
}

@article{Fus26,
title={Silting-discrete graded path algebras},
author={Fushimi, Riku},
journal={arXiv preprint arXiv:2605.23704},
year={2026}
}

@article{AS87,
  title={Iterated tilted algebras of type $\widetilde{A}_{n}$},
  author={Assem, Ibrahim and Skowroński, Andrzej},
  journal={Math. Z.},
  volume={195},
  number={2},
  pages={269--290},
  year={1987}
}

@article{BS21,
  title={A geometric model for the module category of a gentle algebra},
  author={Baur, Karin and Coelho Simões, Raquel},
  journal={Int. Math. Res. Not. IMRN},
  volume={2021},
  number={15},
  pages={11357--11392},
  year={2021}
}

@article{BGS04,
  title={Classification of discrete derived categories},
  author={Bobi\'{n}ski, Grzegorz and Gei{\ss}, Christof and Skowro\'{n}ski, Andrzej},
  journal={Open Math.},
  volume={2},
  number={1},
  pages={19--49},
  year={2004}
}

@article{AM17,
  title={Classifying tilting complexes over preprojective algebras of Dynkin type},
  author={Aihara, Takuma and Mizuno, Yuya},
  journal={Algebra Number Theory},
  volume={11},
  number={6},
  pages={1287--1315},
  year={2017}
}

@article{Aih13,
  title={Tilting-connected symmetric algebras},
  author={Aihara, Takuma},
  journal={Algebr. Represent. Theory},
  volume={16},
  number={3},
  pages={873--894},
  year={2013}
}

@article{Opp25,
title={Autoequivalences of Fukaya categories of surfaces and graded gentle algebras},
author={Opper, Sebastian},
journal={arXiv preprint arXiv:2510.11543},
year={2025}
}

@article{YY21,
title={The equivalence of two notions of discreteness of triangulated categories},
author={Yao, Lingling and Yang, Dong},
journal={Algebr. Represent. Theory},
volume={24},
number={5},
pages={1295--1312},
year={2021}
}
\Addresses

\end{document}